\newtheorem{sat}{Theorem}[section]		
\newtheorem{lem}[sat]{Lemma}
\newtheorem{prop}[sat]{Proposition}
\newtheorem*{defi*}{Definition}			
\newtheorem*{bei*}{Example}
\newtheorem*{sat*}{Theorem}				
\newtheorem*{kor*}{Corollary}
\newtheorem*{rmk*}{Remark}				
\newtheorem*{quest*}{Question}	
\newtheorem{fact}{Fact}	
\let\ssection=\section
\renewcommand{\section}{\setcounter{equation}{0}\ssection}
\newtheorem*{namedtheorem}{\theoremname}
\newcommand{\theoremname}{testing}
\newenvironment{named}[1]{\renewcommand{\theoremname}{#1}\begin{namedtheorem}}{\end{namedtheorem}}
\theoremstyle{remark}
\newtheorem*{bem}{Remark}
\newtheorem*{namedtheoremr}{\theoremnamer}
\newcommand{\theoremnamer}{testing}
			\newcommand{\BH}{\mathbb H}
\newcommand{\BR}{\mathbb R}			
\newcommand{\BS}{\mathbb S}			\newcommand{\BZ}{\mathbb Z}
\newcommand{\BF}{\mathbb F}
		\newcommand{\CF}{\mathcal F}
\newcommand{\CG}{\mathcal G}		\newcommand{\CH}{\mathcal H}
\newcommand{\CK}{\mathcal K}
\newcommand{\actson}{\curvearrowright}
\newcommand{\D}{\partial}
\DeclareMathOperator{\Id}{Id}		
\DeclareMathOperator{\Isom}{Isom}	
\DeclareMathOperator{\vol}{vol}		
\DeclareMathOperator{\inj}{inj}
\DeclareMathOperator{\diam}{diam}
\DeclareMathOperator{\rel}{rel}
\DeclareMathOperator{\Ker}{Ker}
\newcommand{\comment}[1]{}
\DeclareMathOperator{\core}{core}
\DeclareMathOperator{\Jac}{Jac}
\newcommand{\Area}{\mathrm{Area}}
\newcommand{\norm}[1]{\lVert #1 \rVert}
\begin{document}

\title[]{Free vs. Locally Free Kleinian Groups}
\author{Pekka Pankka}
\address{Department of Mathematics and Statistics, University of Jyv\"askyl\"a}
\email{pekka.pankka@jyu.fi}
\author{Juan Souto}
\address{IRMAR, Universit\'e de Rennes 1}
\email{juan.souto@univ-rennes1.fr}
\date{\today}

\thanks{P.P. was partially supported by the Academy of Finland project 283082.}

\begin{abstract}
We prove that Kleinian groups whose limit sets are Cantor sets of Hausdorff dimension $<1$ are free. On the other hand we construct for any $\epsilon>0$ examples of non-free purely hyperbolic Kleinian groups whose limit set is a Cantor set of Hausdorff dimension $<1+\epsilon$.
\end{abstract}

\maketitle

\section{Introduction}

In this paper we prove the following theorem:

\begin{sat}\label{main-dim}
Let $\Gamma\subset\Isom_+(\BH^n)$ be a discrete, torsion free, and non-elementary group of orientation preserving isometries of $\BH^n$. If the limit set $\Lambda_\Gamma\subset\BS^{n-1}$ has Hausdorff dimension $\dim_\CH(\Lambda_\Gamma)<1$, then $\Gamma$ is a free group.
\end{sat}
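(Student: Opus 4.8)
The plan is to exhibit a simplicial tree $T$ on which $\Gamma$ acts without inversions, with trivial edge stabilizers and free vertex stabilizers. By Bass--Serre theory $\Gamma$ is then the fundamental group of a graph of groups with trivial edge groups, hence the free product of the vertex groups with a free group, hence free; crucially this uses no finiteness hypothesis on $\Gamma$, which matters since $\Gamma$ need not be finitely generated. The vertex groups will turn out to be elementary, torsion free, and of at most one limit point; such a group is trivial, or infinite cyclic (generated by a parabolic), or an infinite torsion free Bieberbach group of rank at least two acting by parabolics. The last case is ruled out by the hypothesis: it would contain a rank two parabolic subgroup $\BZ^2$, so the critical exponent satisfies $\delta(\Gamma)\ge\delta(\BZ^2)=1$, and since $\Gamma$ is non-elementary the theorem of Bishop and Jones on the radial limit set gives $\dim_\CH(\Lambda_\Gamma)\ge\delta(\Gamma)\ge 1$, contradicting the assumption. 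Hence the vertex groups are free. (For $n=2$ the statement is the classical fact that a non-elementary torsion free Fuchsian group with $\Lambda_\Gamma\ne\BS^1$ is free, so one may assume $n\ge 3$.)

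The tree $T$ will be dual to a family $\CP$ of totally geodesic hyperplanes of $\BH^n$ with the following properties: $\CP$ is $\Gamma$-invariant; its members are pairwise disjoint; it is locally finite in $\BH^n$; each $P\in\CP$ has ideal boundary sphere $\partial_\infty P$ disjoint from $\Lambda_\Gamma$; and any two distinct points of $\Lambda_\Gamma$ are separated in $\BS^{n-1}$ by $\partial_\infty P$ for some $P\in\CP$. Given such $\CP$, let the vertices of $T$ be the components (the ``chambers'') of $\BH^n\setminus\bigcup\CP$ and its edges the members of $\CP$; this is a tree because the hyperplanes are pairwise disjoint, it is simplicial because the family is locally finite, and $\Gamma$ acts on it. If $P\in\CP$, then $\Stab_\Gamma(P)$ acts on $P\cong\BH^{n-1}$ with limit set contained in $\partial_\infty P\cap\Lambda_\Gamma=\emptyset$, so it is finite, so trivial; thus all edge stabilizers are trivial, and there are no inversions (an element inverting an edge $P$ stabilizes $P$). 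If $R$ is a chamber, then the limit set of $\Stab_\Gamma(R)$ is contained in $\overline{R}\cap\Lambda_\Gamma$; since $R$ lies on one side of each member of $\CP$ and any two points of $\Lambda_\Gamma$ are separated by some member, $\overline{R}\cap\Lambda_\Gamma$ has at most one point, so $\Stab_\Gamma(R)$ is elementary, torsion free, and of at most one limit point, hence free by the previous paragraph. Therefore $\Gamma$ is free.

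Everything is thereby reduced to constructing $\CP$, and this is where the hypothesis $\dim_\CH(\Lambda_\Gamma)<1$ is used essentially; it is also, I expect, the main obstacle. Since $\dim_\CH(\Lambda_\Gamma)<1\le n-2$ one has $\CH^{n-2}(\Lambda_\Gamma)=0$, and hence, by an integral-geometric argument, almost every round $(n-2)$-sphere in $\BS^{n-1}$ misses $\Lambda_\Gamma$; perturbing a round sphere that strictly separates two given points of $\Lambda_\Gamma$ then yields the separation property above. To build the full family one would organise such hyperplanes as follows: produce a $\Gamma$-invariant collection of round balls, any two of which are nested or disjoint, which covers $\Lambda_\Gamma$ and refines down to its points, obtained by repeatedly splitting a clopen piece of $\Lambda_\Gamma$ into small, mutually well-separated clopen sub-pieces and enclosing each in a round ball whose boundary avoids $\Lambda_\Gamma$; the hyperplanes spanned by the boundary spheres of these balls are the members of $\CP$. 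Here $\CH^1(\Lambda_\Gamma)=0$ is precisely what provides, at every scale, the gaps needed both to separate the sub-pieces and to place the enclosing spheres off $\Lambda_\Gamma$. The genuinely delicate points, and the technical heart of the argument, are: (i) \emph{local finiteness in $\BH^n$} --- pairwise disjoint hyperplanes may accumulate onto a compact subset of $\BH^n$, and one must prove quantitatively that a hyperplane sitting deeper in the nesting cuts off an ever smaller but still nonempty part of $\Lambda_\Gamma$, so that an accumulating family would force a definite portion of $\Lambda_\Gamma$ onto an ideal sphere disjoint from it; and (ii) \emph{$\Gamma$-equivariance} --- since M\"obius transformations distort the scales on $\BS^{n-1}$, one cannot simply transport a fixed dyadic decomposition of $\BS^{n-1}$, and should instead encode the construction by scale-insensitive data in $\BH^n$, such as the convex hull of $\Lambda_\Gamma$ and nearest-point projections to it. Once $\CP$ is constructed, the dual tree together with the stabilizer computations above finishes the proof; the non-free examples of dimension $<1+\epsilon$ announced in the abstract indicate that the threshold $1$ is essentially optimal.
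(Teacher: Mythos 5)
Your approach is genuinely different from the paper's. The paper first observes (via Kulkarni) that $\Gamma$ is locally free, then proves an abstract criterion (Proposition~\ref{criterium}) reducing freeness to a stabilization property for chains of finitely generated subgroups filled by a fixed $H$, and finally verifies that property (Proposition~\ref{diam-bound}) by using the Besson--Courtois--Gallot natural map -- whose $2$-Jacobian bound $\bigl(\tfrac{\delta+1}{2}\bigr)^2<1$ is exactly where $\dim_\CH(\Lambda_\Gamma)<1$ enters -- to produce uniformly small-area extension complexes and then running a bounded-diameter argument. Your plan is instead to realize a Bass--Serre tree directly inside $\BH^n$ via a $\Gamma$-equivariant family $\CP$ of pairwise disjoint, locally finite, separating hyperplanes. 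The reductions you carry out given such a $\CP$ are sound: the stabilizer computations, the Bishop--Jones argument ruling out rank $\ge2$ parabolic vertex groups, the remark that Bass--Serre theory needs no finiteness on $\Gamma$, and the separate treatment of $n=2$ are all correct.

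The gap is the construction of $\CP$ itself, and it is not a technicality -- it is the theorem. You acknowledge the two hard points, local finiteness and $\Gamma$-equivariance, but neither is resolved, and the ideas offered do not close them. The integral-geometric observation that $\CH^{n-2}(\Lambda_\Gamma)=0$ forces almost every round $(n-2)$-sphere to miss $\Lambda_\Gamma$ only yields \emph{individual} separating hyperplanes; it gives no mechanism to choose a whole family so that it is simultaneously $\Gamma$-invariant and pairwise disjoint. Taking the $\Gamma$-orbit of one separating hyperplane $P_0$ is the natural move, but $gP_0$ and $hP_0$ will in general intersect, and the suggestion to ``encode the construction by scale-insensitive data in $\BH^n$'' such as the convex hull and nearest-point projections is too vague to constitute a construction -- when $\Gamma$ is infinitely generated the convex core can be very wild and there is no compact fundamental piece on which to anchor a decomposition. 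Local finiteness is likewise delicate: nothing in your sketch prevents infinitely many members of $\CP$ from accumulating on a compact subset of $\BH^n$, in which case the dual graph is not even a simplicial tree. Both of these issues are precisely where one expects the hypothesis $\dim_\CH(\Lambda_\Gamma)<1$ to be used quantitatively, as it is in the paper through the BCG Jacobian estimate; your proposal uses it only once, qualitatively, to obtain $\CH^{n-2}(\Lambda_\Gamma)=0$. As it stands this is a plausible programme -- essentially a putative extension of Kulkarni's hyperplane-cutting argument from geometrically finite to arbitrary $\Gamma$ -- but not a proof.
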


Recall that a discrete subgroup of isometries of the hyperbolic space $\BH^n$ is {\em elementary} if it contains an abelian subgroup of finite index. The \emph{limit set} $\Lambda_\Gamma\subset\BS^{n-1}=\D_\infty\BH^n$ of a discrete non-elementary subgroup $\Gamma$ of $\Isom_+(\BH^n)$ is the set of accumulation points of the orbit $\Gamma x$ of some -- and hence any -- point $x\in\BH^n$. We refer to \cite{Kapovich-book,Ratcliffe} for basic facts and definitions on hyperbolic space and its isometries, and to \cite{McMullen,Sullivan} for a discussion of the relation of the Hausdorff dimension $\dim_\CH(\Lambda_\Gamma)$ of the limit set $\Lambda_\Gamma$ to other classical invariants such as the critical exponent $\delta_\Gamma$ of the group $\Gamma$. 

Limit sets are perfect which implies, under the assumptions of Theorem \ref{main-dim}, that $\Lambda_\Gamma$ is a Cantor set. Also, since the critical exponent $\delta_\Gamma$ bounds the Hausdorff dimension $\dim_{\CH}(\Lambda_\Gamma)$ from below, the group $\Gamma$ in Theorem \ref{main-dim} does not contain free abelian groups of higher rank. Taking into account these two facts, we have that Theorem \ref{main-dim} follows, in the finitely generated case, from the following classical result of Kulkarni \cite{Kulkarni} (see also \cite{KK} for a proof):

\begin{sat*}[Kulkarni]
Every finitely generated discrete torsion free subgroup $\Gamma\subset\Isom_+(\BH^n)$ whose limit set $\Lambda_\Gamma$ is a Cantor set is the free product of elementary subgroups $\Gamma=P_1*\dots*P_k$. In particular, if the maximal abelian subgroups of $\Gamma$ are cyclic, $\Gamma$ is free.
\end{sat*}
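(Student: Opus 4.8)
The plan is to reduce, via Grushko's free-product decomposition theorem, to a statement about freely indecomposable subgroups, and then to prove that statement by a geometric splitting argument.

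\medskip

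\emph{Step 1 (Grushko reduction).} By Grushko's theorem, write $\Gamma=\Gamma_1*\dots*\Gamma_m*F$, where $F$ is free of finite rank and each $\Gamma_i$ is finitely generated, freely indecomposable, and not infinite cyclic. Since $F$ is a free product of copies of $\BZ$, it suffices to prove that each $\Gamma_i$ is elementary. Now $\Gamma_i\le\Gamma$ is again discrete and torsion free, and $\Lambda_{\Gamma_i}\subset\Lambda_\Gamma$ is a closed, hence totally disconnected, subset; if $\Gamma_i$ were non-elementary then $\Lambda_{\Gamma_i}$ would in addition be perfect, so a Cantor set with at least three points. Thus the theorem follows from the \emph{Claim}: a finitely generated, torsion free, non-elementary discrete group $\Delta\subset\Isom_+(\BH^n)$ with totally disconnected limit set splits as a nontrivial free product. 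Indeed, applied to $\Delta=\Gamma_i$ this contradicts free indecomposability, so each $\Gamma_i$ is elementary; being torsion free, nontrivial, and not $\BZ$, it is then a torsion-free parabolic (virtually free abelian) subgroup, in particular elementary. Moreover, if every abelian subgroup of $\Gamma$ is cyclic, each such $\Gamma_i$ is virtually cyclic and torsion free, hence $\cong\BZ$, and $\Gamma=\BZ*\dots*\BZ$ is free.

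\medskip

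\emph{Step 2 (proof of the Claim).} Fix a partition $\Lambda_\Delta=A\sqcup B$ into two nonempty clopen subsets. Since $\Lambda_\Delta$ is totally disconnected, there is a compact hypersurface $\Sigma\subset\BS^{n-1}\setminus\Lambda_\Delta$ separating $A$ from $B$; let $H\subset\BH^n$ be a properly embedded hypersurface with ideal boundary $\partial_\infty H=\Sigma$ dividing $\BH^n$ so that the two complementary ideal regions contain $A$ and $B$ respectively. (When $\dim_\CH(\Lambda_\Delta)<1$ -- the only case needed for Theorem~\ref{main-dim} -- one may take $\Sigma$ to be a round sphere and $H$ the totally geodesic hyperplane it bounds, since the set of geodesic hyperplanes whose ideal sphere meets $\Lambda_\Delta$ is then a null set among all geodesic hyperplanes.) Because $\Sigma$ is compact and disjoint from $\Lambda_\Delta$, the closure $H\cup\Sigma$ is compact in $\BH^n\cup\Omega_\Delta$, on which $\Delta$ acts properly discontinuously; hence the orbit $\Delta\cdot H$ is locally finite there. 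After replacing $\Delta\cdot H$ by a $\Delta$-invariant locally finite family $\CH$ of pairwise disjoint hypersurfaces realizing the same ideal-separation data, let $T$ be the simplicial tree dual to $\CH$, on which $\Delta$ acts without inversions. Each member of $\CH$ is a $\Delta$-translate of $H$, so it separates $\Lambda_\Delta$ into two nonempty pieces; therefore the $\Delta$-action on $T$ has no global fixed point, since a fixed vertex or edge would force $\Lambda_\Delta$ to lie entirely on one side of some member of $\CH$, contradicting minimality of the action of $\Delta$ on its limit set. By Bass--Serre theory, $\Delta$ splits nontrivially as a graph of groups whose edge groups are the stabilizers $\Stab_\Delta(H')$, $H'\in\CH$. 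Any such stabilizer preserves $H'$, whose ideal boundary is a $\Delta$-translate $g\Sigma$ of $\Sigma$; hence its limit set lies in $g\Sigma\cap\Lambda_\Delta=\emptyset$, and a discrete group of hyperbolic isometries with empty limit set has finite orbits, so is finite, so -- being torsion free -- trivial. Therefore all edge groups are trivial and $\Delta$ is a nontrivial free product, proving the Claim.

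\medskip

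\emph{The case $n=2$ and the main obstacle.} For $n=2$ one can bypass Step 2: a finitely generated Fuchsian group with limit set $\ne\BS^1$ is free by classical Fuchsian group theory, a special case of the assertion. In general, the delicate point is precisely the passage from the locally finite, but possibly self-intersecting, family $\Delta\cdot H$ to a $\Delta$-invariant family $\CH$ of \emph{pairwise disjoint} hypersurfaces -- equivalently, the extraction of an honest tree (or a CAT(0) cube complex) from the ``wall'' defined by $\Delta\cdot H$. In dimension three this is done via least-area (minimal) hypersurface representatives; in arbitrary dimension it is handled through the structure theory of convergence group actions on totally disconnected compacta. This is where the proofs in \cite{Kulkarni,KK} do the substantial work, and the outline above is their skeleton.
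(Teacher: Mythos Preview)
The paper does not prove this statement: Kulkarni's theorem is quoted as a known result, with references \cite{Kulkarni} and \cite{KK}, and is used as a black box throughout (to conclude that the groups in Theorem~\ref{main-dim} and Theorem~\ref{grope} are locally free, and in the proof of Fact~\ref{fact3}). There is therefore no in-paper proof to compare your attempt against.

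Regarding your sketch on its own merits: the Grushko reduction in Step~1 is correct and standard. Step~2 is where the content is, and you are candid that the crucial move---replacing the locally finite but generally self-intersecting orbit $\Delta\cdot H$ by a $\Delta$-invariant family $\CH$ of \emph{pairwise disjoint} hypersurfaces---is not actually carried out. Without that step there is no tree, and the Bass--Serre argument does not get off the ground; this is precisely where the substantial work in \cite{Kulkarni,KK} lies. So what you have is an accurate outline with the main gap honestly flagged, not a proof.

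Two minor points. In Step~1, once you have shown that each freely indecomposable factor $\Gamma_i$ is elementary and not infinite cyclic, the conclusion under the cyclic-maximal-abelian hypothesis should be phrased as ``no such $\Gamma_i$ exists'' rather than ``each such $\Gamma_i$ is $\cong\BZ$'', since the latter contradicts your own setup. In Step~2, once you have passed to a disjoint family $\CH$, its members need no longer be $\Delta$-translates of $H$; so the clause ``whose ideal boundary is a $\Delta$-translate $g\Sigma$ of $\Sigma$'' in the edge-group argument is unjustified. What you actually need---and should build into the meaning of ``same ideal-separation data''---is that each wall in $\CH$ still has ideal boundary disjoint from $\Lambda_\Delta$.
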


\begin{bem}
The decomposition provided by Kulkarni's theorem has the property that parabolic subgroups of $\Gamma$ can be conjugated into the factors $P_i$. In particular, $\Gamma$ splits as a free product relative to its parabolic subgroups.
\end{bem}

Since Theorem \ref{main-dim} is, for finitely generated groups, a direct consequence of Kulkarni's theorem, it is perhaps natural to wonder if this stronger result remains true in the generality of Theorem \ref{main-dim}, i.e.\;for possibly {\em infinitely generated} groups. That this is not the case is basically the content of our second theorem:

\begin{sat}\label{grope}
For each $\epsilon>0$ there is a discrete purely hyperbolic subgroup $\Gamma$ of $\Isom_+(\BH^3)$ which is not free but whose limit set $\Lambda_\Gamma$ is a Cantor set with Hausdorff dimension $\dim_\CH(\Lambda_\Gamma)\le 1+\epsilon$.
\end{sat}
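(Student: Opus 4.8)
The plan is to produce $\Gamma$ as an increasing union of geometrically finite Schottky-type subgroups combined according to the combinatorics of an infinite grope, and to deduce non-freeness from the lower central series. Recall that the fundamental group of an infinite grope is the direct limit of the free groups
\[
F_2\hookrightarrow F_4\hookrightarrow F_8\hookrightarrow\cdots,
\]
where the $n$-th inclusion sends each of the $2^n$ free generators of $F_{2^n}$ to a commutator of two generators of $F_{2^{n+1}}$; these maps are injective because the images of the generators lie one in each free factor of a free product decomposition $F_{2^{n+1}}=F_2*\cdots*F_2$, hence generate the free product of the cyclic groups they span. Write $\Gamma_\infty=\varinjlim F_{2^n}$. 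The class $w=[a_1,b_1]$ of the boundary circle of the grope is non-trivial in $\Gamma_\infty$, since at every finite stage it is a non-trivial (iterated commutator) element of $F_{2^n}$; but it lies in the $2^n$-th term of the lower central series of $F_{2^n}$, hence $w\in\gamma_{2^n}(\Gamma_\infty)$ for all $n$. Since free groups of arbitrary rank are residually nilpotent, a group with a non-trivial element in every term of its lower central series cannot be free. So it suffices to realize $\Gamma_\infty$ as a discrete, purely hyperbolic subgroup of $\Isom_+(\BH^3)$ with a Cantor limit set of Hausdorff dimension at most $1+\epsilon$. (Such a $\Gamma$ is locally free but not free, which is the dichotomy of the title: every finitely generated subgroup of $\Gamma_\infty$ is contained in some $F_{2^n}$, hence free, in agreement with Kulkarni's theorem applied to finitely generated subgroups.)

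For the realization I would proceed by induction on the stage $n$. Begin with $\Gamma_1$ a rank-$2$ Schottky group, realized by ping-pong on two pairs of small, pairwise disjoint round balls on $\BS^2$ sitting inside a fixed spherical cap; then $\Gamma_1\cong F_2$ and the commutator $w$ of its two generators is loxodromic. Assume inductively that $\Gamma_n$ has been built as a discrete, purely loxodromic group whose distinguished elements (playing the role of the $2^n$ active generators of $F_{2^n}$) are loxodromic with precisely invariant pairs of round disks around their fixed points, all contained in shrinking balls inside the cap. Pass to $\Gamma_{n+1}$ by performing, at each such element $g$, a Klein--Maskit amalgamation: glue in a new rank-$2$ Schottky piece along the cyclic subgroup $\langle g\rangle$, identifying $g$ with the commutator of the two generators of the new piece. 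Each amalgamation keeps the group discrete and purely loxodromic, realizes the required commutator relation on the nose, and doubles the rank in the grope pattern; performing it for all $2^n$ distinguished elements produces $\Gamma_{n+1}$, whose new distinguished generators can again be taken to have precisely invariant disks inside still smaller balls. Taking $\Gamma=\bigcup_n\Gamma_n$, discreteness follows because every $g\ne1$ lies in some $\Gamma_n$ and a fixed basepoint $x_0\in\BH^3$ (placed outside all the hemispheres over the balls, which remain in the cap) satisfies $d(x_0,gx_0)\ge c_0>0$ with $c_0$ independent of $g$; by construction $\Gamma\cong\Gamma_\infty$, so $\Gamma$ is not free, and it is purely hyperbolic.

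It remains to control $\Lambda_\Gamma$. At stage $n$ the limit set $\Lambda_{\Gamma_n}$ is a Cantor set covered by $2^{n+1}$ pairwise disjoint round balls of radius $\le\rho_n$, each refined by the stage-$(n+1)$ balls descending from it; choosing the nesting so that stage $n+1$ lies in a $\delta_n$-neighbourhood of stage $n$ with $\delta_n\to0$ summably, $\Lambda_\Gamma$ is the intersection of these nested neighbourhoods, hence perfect --- being the limit set of a non-elementary group --- and totally disconnected, i.e.\ a Cantor set. A Moran-type covering estimate then gives $\dim_{\CH}(\Lambda_\Gamma)\le\log 2/\log(1/c)$, where $c<1$ is the contraction ratio $\rho_{n+1}/\rho_n$ afforded by the amalgamation step. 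One cannot take $c\le\tfrac{1}{2}$: by the contrapositive of Theorem \ref{main-dim}, any non-free $\Gamma$ as above must have $\dim_{\CH}(\Lambda_\Gamma)\ge 1$, so $1$ is a genuine lower bound and the point is that it is sharp. The amalgamation does, however, allow $c$ to be any value slightly larger than $\tfrac{1}{2}$, and since $2^{-1/(1+\epsilon)}>\tfrac{1}{2}$ one may arrange $c\le 2^{-1/(1+\epsilon)}$, whence $\dim_{\CH}(\Lambda_\Gamma)\le 1+\epsilon$.

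The main obstacle is the inductive geometric step: realizing ``$g$ equals the commutator of the two new Schottky generators'' as an exact identity in $\Isom_+(\BH^3)$ while simultaneously (i) maintaining the Klein--Maskit hypotheses --- a precisely invariant system of round disks --- through all infinitely many amalgamations, (ii) nesting each new piece inside a controlled neighbourhood of $\Lambda_{\Gamma_n}$, and (iii) bounding the contraction ratio $\rho_{n+1}/\rho_n$ uniformly in $n$ and above $\tfrac{1}{2}$. This amounts to a delicate simultaneous bookkeeping --- organized by the binary tree underlying the grope --- of fixed-point positions, complex translation lengths, and radii; everything else reduces to standard ping-pong theory, the combination theorem, and a routine dimension computation.
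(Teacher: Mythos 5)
Your proposal shares the \emph{philosophy} of the paper --- realize a locally-free-but-not-free ``grope'' group as a thin Kleinian group with Cantor limit set --- but the route is genuinely different, and two of its three components have real gaps.

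The non-freeness argument is sound and is actually different from the paper's. You show that the boundary class $w=[a_1,b_1]$ lies in $\gamma_{2^n}$ of every finite stage, hence in $\bigcap_n\gamma_n(\Gamma_\infty)$, and invoke residual nilpotence of free groups; this works and is clean. The paper's group is not literally the full binary grope group but an iterated amalgam $\pi_1(S_r^0)*_{\mathbb Z}\pi_1(S_r^1)*_{\mathbb Z}\cdots$ where only \emph{one} curve per stage is capped off, so rank grows linearly rather than exponentially; accordingly, the paper uses a Bass--Serre (no relative free splitting of a surface group with boundary) argument instead of your lower central series one. Both arguments are correct for their respective groups, so this is a legitimate alternative.

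Where the proposal breaks down is the Hausdorff dimension bound, and this is not just a bookkeeping issue. The estimate $\dim_{\mathcal H}(\Lambda_\Gamma)\le\log 2/\log(1/c)$ treats $\Lambda_\Gamma$ as if it were a two-to-one Moran construction driven by the stage-to-stage refinement, but at each fixed stage $n$ the Schottky group $\Gamma_n\cong F_{2^n}$ already has its own fractal structure with branching factor $2^{n+1}-1$. A covering of $\Lambda_\Gamma$ at scale $\rho_n$ is not by $2^{n+1}$ balls: it must also resolve the $\Gamma_n$-orbit nesting inside each top-level ball. Equivalently, a rank-$k$ Schottky group has critical exponent roughly $\log(2k-1)/\ell$ where $\ell$ is the translation length scale, so to keep $\delta_{\Gamma_n}\le 1+\epsilon$ for all $n$ you need $\ell_n\gtrsim (n+1)\log 2$, i.e.\ translation lengths (and hence nesting depth, not just a fixed ratio $c>1/2$) must grow linearly with the stage. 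Your sentence ``one may arrange $c\le 2^{-1/(1+\epsilon)}$'' does not engage with this; $c$ controls the inter-stage contraction, not the intra-stage branching, and it is the latter that threatens to push $\dim_{\mathcal H}$ past $1+\epsilon$. On top of this, even if each $\delta_{\Gamma_n}\le 1+\epsilon$, it is not automatic that the limit set of the infinitely generated union has dimension $\le 1+\epsilon$; the inequality $\dim_{\mathcal H}(\Lambda_\Gamma)=\delta_\Gamma$ is a geometrically-finite phenomenon and your $\Gamma$ is nothing of the sort.

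This is exactly the point that the paper handles with a genuinely different (and, I think, the key) idea you are missing: rather than estimating the dimension of the infinitely generated group's limit set directly, the paper embeds $\Gamma_r$ as a subgroup of a convex-cocompact finitely generated group $\rho_r(\pi_1(Y_r))$ coming from an HNN extension, computes $\dim_{\mathcal H}(\Lambda_{\rho_r(\pi_1(Y_r))})\le 1+\epsilon$ by a volume-growth-entropy estimate for the CAT$(-1)$ branched surface $\tilde Y_r$ (where the wide collars of width $r$ make the branching contribute only $\tfrac{\ln 2}{2r}$ to the entropy), and then uses simple monotonicity $\Lambda_{\Gamma_r}\subset\Lambda_{\rho_r(\pi_1(Y_r))}$ to conclude. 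This removes the infinitely-generated difficulty entirely. Your Klein--Maskit approach has no such ambient finitely generated group to lean on, and the flagged ``delicate simultaneous bookkeeping'' of realizing the exact commutator relations through infinitely many combinations is a second open gap on top of the first. Without replacing the Moran estimate by a genuine orbit-growth or entropy bound, the proposed proof does not establish the dimension claim.
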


\begin{bem}
Note at this point that it follows from Kulkarni's theorem that any discrete torsion free subgroup of $\Isom_+(\BH^n)$ whose maximal abelian groups are cyclic, and whose limit set is a Cantor set, is {\em locally free}, meaning that every finitely generated subgroup is free. This applies in particular to the groups considered in Theorem \ref{main-dim} as well as to those provided by Theorem \ref{grope} and explains the title of this paper.
\end{bem}

It follows from Theorem \ref{grope} that the proof of Theorem \ref{main-dim} cannot be purely topological because it does not suffice to know that the limit set is a Cantor set, or even a tame Cantor set -- every Cantor set in $\BS^2$ is tame. The main geometric ingredient in the proof of Theorem \ref{main-dim} is a result of Besson, Courtois and Gallot \cite{BCG} which implies that, for $\Gamma$ as in the statement, there is a self-map $\BH^n/\Gamma\to\BH^n/\Gamma$ which decreases the 2-dimensional Hausdorff measure of sets in $\BH^n/\Gamma$. The existence of this map has been already used in a similar setting by Kapovich \cite{Kapovich-GAFA}, who showed that non-elementary and torsion free groups $\Gamma\subset \Isom_+(\BH^n)$ with critical exponent $\delta_\Gamma$ have homological dimension $\hbox{hdim}(\Gamma)\le1+\delta_\Gamma$. In our setting this implies that the group $\Gamma$ in Theorem \ref{main-dim} has homological dimension 1. This is however not enough to prove that $\Gamma$ in Theorem \ref{main-dim} is free because all locally free groups, for example those constructed to prove Theorem \ref{grope}, have homological dimension 1. 

Theorem \ref{main-dim} follows easily by constructing a free basis for $\Gamma$ once we establish the following fact: {\em If $H\subset G\subset\Gamma$ are finitely generated with $G$ filled by $H$, then $G$ is generated by elements in $\Gamma$ which translate a fixed base point in $\BH^n$ by less than some constant depending only on $H$ and $\Gamma$} (see Proposition \ref{diam-bound} for the precise statement). This will be proved basically as follows. 

We use the Besson-Courtois-Gallot map to find a 2-dimensional complex $X_G\subset\BH^n/G$ with $\pi_1(X_G)=G$, whose area is bounded from above independently of $G$ and which solves a certain Dirichlet problem. Together with the assumption that $H$ fills $G$, this implies that balls in $X_G$ centered at points in the thick part of $\BH^n/G$ contain definite amount of area. At this point we can use the standard strategy to show that a global area upper bound together with a lower bound for the area of small balls implies an upper bound on the diameter, and hence on the length of those elements needed to generate the fundamental group $\pi_1(X_G)=G$. 

This paper is organized as follows. In Section \ref{sec:AC} we discuss an algebraic condition ensuring the existence of a free basis for a locally free group. In Section \ref{sec-proof-thm} we transform this algebraic condition to Proposition \ref{diam-bound} mentioned above. Then, in Section \ref{sec:BCG}, we discuss the Besson-Courtois-Gallot map and the existence of complexes $X_G$. The proof of Proposition \ref{diam-bound} is then finished in Section \ref{sec:diam}. The example proving Theorem \ref{grope} is given in the final section.

\medskip

\noindent{\bf Acknowledgements.} This paper was written during several visits to Rennes and Jyv\"askyl\"a. Both authors thank each others institutions for hospitality. 


\section{Algebraic considerations}
\label{sec:AC}

In this section we discuss the algebraic components of the proof of Theorem \ref{main-dim}. Our main tool is some basic Bass--Serre theory and we assume a certain familiarity with such concepts as the {\em fundamental group $\pi_1(\CG)$ of a graph of groups $\CG$}, the relation between representations of a given group $G$ as the fundamental group of graphs of groups, and actions of $G$ on simplicial trees. We just recall some terminology. A group $G$ {\em splits as a free product} if it is the fundamental group of an essential graph of groups where one of the edge groups is trivial. Here essential means that there is no vertex of degree $1$ for which the inclusion of the edge group into the vertex group is an isomorphism. Equivalently, the associated Bass--Serre tree does not have vertices of valence $1$. A free splitting of a group $G$ is \emph{a free splitting relative to a collection $\{H_i\}$ of subgroups of $G$} if each $H_i$ is conjugated to a subgroup in one of the vertex groups. Equivalently, $G$ splits as a free product relative to $\{H_i\}$ if and only if there is a simplicial tree $T$ without vertices of valence $1$ and a minimal action $G\actson T$ such that some edge stablizer is trivial and such that each $H_i$ has a global fixed point in $T$. We will be mostly interested in groups $G$ which do not split as a free product relative to a subgroup $H$; in this case we will say that $H$ {\em fills} $G$. We refer to \cite{Serre-book} for details on the Bass--Serre theory.

\subsection{Detecting free groups}

As mentioned in the introduction, we will be mostly interested in {\em locally free groups}, meaning that all finitely generated subgroups are free. In this section we discuss a criterium ensuring that certain locally free groups are actually free.

\begin{prop}\label{criterium}
Let $\Gamma$ be a countable locally free group and suppose that for every finitely generated subgroup $H\subset\Gamma$ the following condition is satisfied:
\begin{itemize}
\item[(*)] Any increasing sequence $G_1\subset G_2\subset G_3\subset \cdots$ of finitely generated subgroups of $\Gamma$ which are filled by $H$ stabilizes.
\end{itemize}
Then $\Gamma$ is free.
\end{prop}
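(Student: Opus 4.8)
The plan is to build a free basis for $\Gamma$ by exhausting it by finitely generated subgroups and carefully controlling how free bases of successive subgroups interact. Write $\Gamma=\bigcup_n \Gamma_n$ as an increasing union of finitely generated subgroups, where without loss of generality each $\Gamma_n$ is nontrivial. Since $\Gamma$ is locally free, each $\Gamma_n$ is free of some finite rank. The naive idea — choosing a basis of $\Gamma_n$ and extending it to a basis of $\Gamma_{n+1}$ — fails in general because a free factor of a free group need not be a free factor of a larger free group containing it, so a basis need not extend. The point of condition (*) is precisely to repair this: it will let us modify the exhaustion so that each $\Gamma_n$ \emph{is} a free factor of $\Gamma_{n+1}$, and then the bases can be chosen compatibly and their union will be a basis of $\Gamma$.

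The key step is the following. Given a finitely generated $H\subset\Gamma$, say $H=\Gamma_m$ for some $m$, consider the subgroups $G$ with $H\subset G\subset\Gamma$, $G$ finitely generated, and $G$ filled by $H$ (i.e.\ $G$ does not split as a free product relative to $H$). Among all such $G$, condition (*) guarantees there is a maximal one, call it $\widehat H$: indeed any chain of such subgroups stabilizes, so Zorn's lemma (in its chain form, which is all we need since the chains are countable) produces a maximal element. The crucial claim is then that $\widehat H$ is a \emph{free factor} of every finitely generated subgroup $K\subset\Gamma$ containing it. To see this: $K$ is free, and by Bass--Serre theory / the Kurosh subgroup theorem one can find a free splitting of $K$ relative to $\widehat H$ in which $\widehat H$ lies in a vertex group; pushing $\widehat H$ as far as possible, maximality of $\widehat H$ forces the vertex group containing $\widehat H$ to equal $\widehat H$ itself (otherwise that vertex group would be a strictly larger subgroup filled by $H$, since a group that splits as a free product relative to $H$ would contradict maximality only after checking $H$ still fills that vertex group — this is where one uses that $H$ fills $\widehat H$ and transitivity of "filling" under passing to relative free factors). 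Hence $\widehat H$ appears as a vertex group in a free splitting of $K$, i.e.\ $K=\widehat H * (\text{rest})$, so $\widehat H$ is a free factor of $K$.

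With this in hand, finish as follows. Starting from $\Gamma_1$, form $\widehat{\Gamma_1}$, then $\widehat{\widehat{\Gamma_1}}$ — but actually it is cleaner to argue: set $H_1=\Gamma_1$ and let $K_1=\widehat{H_1}$; then inductively, having defined $K_j$, pick $n$ large with $K_j\subset\Gamma_n$, set $H_{j+1}=\Gamma_n$ and $K_{j+1}=\widehat{H_{j+1}}$, chosen so that $K_j\subset K_{j+1}$ (possible since $K_j\subset H_{j+1}$ and $H_{j+1}\subset K_{j+1}$). The $K_j$ are finitely generated, increasing, and exhaust $\Gamma$ (as $\Gamma_n\subset K_j$ eventually). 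By the claim, each $K_j$ is a free factor of $K_{j+1}$, so we may choose a free basis $B_j$ of $K_j$ with $B_1\subset B_2\subset\cdots$; then $B=\bigcup_j B_j$ freely generates $\bigcup_j K_j=\Gamma$, proving $\Gamma$ is free.

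The main obstacle I expect is the claim that a maximal $\widehat H$ is a free factor of every larger finitely generated subgroup — specifically, verifying that "filled by $H$" behaves well under the Bass--Serre manipulations: one must check that if $\widehat H$ sits inside a vertex group $V$ of a relative free splitting of $K$, then $H$ still fills $V$, so that maximality applies and forces $V=\widehat H$. This requires knowing that "$H$ fills $A$ and $A$ is a relative free factor of $B$" implies "$H$ fills $A$" is compatible with restricting splittings of $B$ to $A$ — essentially that a relative free splitting of a vertex group can be blown up to a relative free splitting of the ambient group, which is standard but needs to be stated carefully.
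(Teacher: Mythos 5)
Your proposal is correct and follows essentially the same strategy as the paper: for each finitely generated $H$ use (*) to produce a maximal finitely generated subgroup filled by $H$, show this maximal subgroup is a free factor of every larger finitely generated subgroup (the paper phrases this via the order-preserving, idempotent map $\core_H$ and its unique maximal fixed point, you via Zorn plus transitivity of filling under nested free factors -- the two Bass--Serre arguments are interchangeable), and then build an exhausting filtration in which each term is a free factor of the next, allowing bases to be chosen compatibly.
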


The first step of the proof of Proposition \ref{criterium} is the following well-known fact, which we prove for the convenience of the reader.

\begin{lem}\label{core-exists}
If $\BF$ is a finitely generated free group and $A\subset\BF$ is a subgroup then there is a unique free factor $\core_A(\BF)$ of $\BF$ with $A\subset\core_A(\BF)$ and such that $A$ fills $\core_A(\BF)$.
\end{lem}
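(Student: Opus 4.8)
The plan is to use the structure theory of subgroups of free groups via Bass–Serre theory, exploiting that a subgroup $A$ of a finitely generated free group $\BF$ acts on the Bass–Serre tree $T$ of any free splitting of $\BF$. Recall that $\BF$, being free of finite rank, acts freely on a tree $T$ (its Cayley graph with respect to a basis, or more flexibly the tree of any maximal free splitting). Given the subgroup $A$, I would consider the minimal $A$-invariant subtree $T_A\subset T$ (which exists and is unique since $A$ acts on $T$ without inversions; it is empty only if $A$ is trivial, a degenerate case handled separately by taking $\core_A(\BF)=1$). The quotient graph $A\backslash T_A$ is finite because $A$ is finitely generated — indeed, finitely generated subgroups of free groups are themselves free of finite rank, and the core graph $A\backslash T_A$ has finite first Betti number; one then checks it is finite after collapsing, or works directly with Stallings' folding picture.

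First I would establish \emph{existence}. Take a maximal free splitting of $\BF$, i.e.\ a free action on a tree $T$ whose edge stabilizers are trivial and whose quotient realizes $\BF$ as a free product of its "indecomposable" pieces — since $\BF$ is free this is just $\BF$ acting on a tree with trivial vertex stabilizers. The subgroup $A$ acts on $T$; restrict to the minimal subtree $T_A$. Now I build a free factor as follows: the $A$-action on $T_A$ gives, via Bass–Serre, a graph-of-groups decomposition of $A$; pushing this forward, the smallest $\BF$-invariant subtree containing a fundamental domain for $A$ on $T_A$ and "saturated" appropriately yields a free factor. More concretely and more cleanly: use Stallings graphs. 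Realize $\BF=\pi_1(R)$ for a rose $R$ with one vertex, and let $\pi\colon\Delta\to R$ be the immersion of the core Stallings graph representing $A$. The core graph $\Delta$ is finite since $A$ is finitely generated. Now extend $\Delta$ to a graph $\Delta'$ by adding, for each vertex of $\Delta$, the missing half-edges so that $\Delta'\to R$ becomes an immersion that is surjective on the link of every vertex — i.e.\ a covering onto its image after also taking the connected "completion". The subgroup $\core_A(\BF):=\pi_1(\Delta')$ (for a suitable such $\Delta'$ obtained by adding a \emph{minimal} set of edges to make every vertex locally surjective while keeping the graph immersed and connected) is a free factor of $\BF$ containing $A$. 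The point is the classical fact: a subgroup $B\le\BF$ is a free factor if and only if its Stallings graph, after the canonical completion, is a "subcover" — equivalently $B$ is a free factor iff one can fold $\Delta$ into $R$ by only adding edges, never identifying vertices. That $A$ fills $\core_A(\BF)$ is exactly the statement that $\Delta$ already contains a maximal subtree's worth of the homotopy of $\Delta'$ at every branch vertex, i.e.\ no further free splitting of $\pi_1(\Delta')$ is respected by $A$.

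For \emph{uniqueness}, suppose $F_1,F_2$ are both free factors of $\BF$ containing $A$ and filled by $A$. Consider $F_1\cap F_2$. By the Kurosh subgroup theorem applied inside $\BF=F_1*(\text{complement})$, the subgroup $F_2$ decomposes as a free product of its intersections with conjugates of $F_1$ and a free group; since $A\le F_1\cap F_2$ and $A$ fills $F_2$, all the free-product factors but one must be absent, forcing $F_2\le F_1$ after conjugation — and since both contain $A$ which has nontrivial (indeed filling) image, the conjugating element can be taken trivial, so $F_2\le F_1$. Symmetrically $F_1\le F_2$, hence $F_1=F_2$. The main obstacle I anticipate is making the existence argument genuinely airtight: the passage from "$\Delta$ immerses in $R$" to "choose the right completion $\Delta'$ so that $\pi_1(\Delta')$ is a free factor AND is filled by $A$" requires care, since naively completing to a full cover gives $\BF$ itself (which is filled by $A$ only if $A$ already fills $\BF$). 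The correct recipe is to add edges only to make the map $\Delta'\to R$ locally surjective at vertices that are \emph{branch} vertices for the $A$-action — equivalently, to saturate only in the directions forced by the tree $T_A$ — and verifying that this yields a free factor is where the folding combinatorics must be done carefully; this is essentially the content of Stallings' theory of folds, so I would cite \cite{Serre-book} together with the standard Stallings-graph formalism and keep the verification brief.
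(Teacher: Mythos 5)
Your uniqueness argument is essentially sound and is a legitimate alternative to the paper's: you invoke the Kurosh subgroup theorem for $\BF=F_1*C$ applied to $F_2$, whereas the paper uses the Bass--Serre tree of this splitting directly and observes that $\bar A$ must fix a vertex because otherwise its minimal invariant subtree would give a free splitting of $\bar A$ relative to $A$. These are two faces of the same coin; Kurosh is Bass--Serre for free products. One small thing you gloss over: to pass from ``$F_2\le gF_1g^{-1}$'' to ``$F_2\le F_1$'' you need malnormality of $F_1$ in $F_1*C$ (so $F_1\cap gF_1g^{-1}\ne 1\Rightarrow g\in F_1$), which the paper avoids by noting that a nontrivial $A$ fixes a \emph{unique} vertex of the Bass--Serre tree and that vertex is stabilized by $\bar A'$ itself, not a conjugate. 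Either way, the reasoning closes.

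The existence half is where the genuine gap lies, and you flag it yourself. The recipe ``add half-edges at branch vertices of the $A$-action to complete the Stallings immersion $\Delta\to R$ to $\Delta'\to R$, and take $\core_A(\BF)=\pi_1(\Delta')$'' is not a standard construction and, as stated, is not correct: there is no simple local-surjectivity completion of a Stallings graph that produces the smallest free factor containing $A$. (Detecting whether a subgroup lies in a proper free factor is genuinely subtle; it is the content of Whitehead's algorithm, not a local link condition.) For instance, with $\BF=\langle a,b\rangle$ and $A=\langle a^2\rangle$, the minimal free factor containing $A$ is $\langle a\rangle$, but the Stallings graph of $A$ (a length-two $a$-loop) does not embed into the Stallings graph of $\langle a\rangle$ (a length-one $a$-loop) over $R$, so no ``completion by adding edges'' to the graph of $A$ yields $\langle a\rangle$ as a subgraph. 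The paper's existence argument sidesteps all of this with a one-liner: order the free factors of $\BF$ containing $A$ by inclusion; descending chains stabilize by Grushko's theorem (ranks strictly decrease); any minimal such free factor is filled by $A$ by construction. You should replace your Stallings-graph completion with this Grushko minimality argument, or else supply the nontrivial Whitehead/folding theory you would need to make the combinatorial recipe precise.
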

\begin{proof}
We order the set of free factors of the finitely generated free group $\BF$ by inclusion and note that any descending sequence stabilizes by Grushko's theorem. It follows that any subgroup $A$ fills some free factor $\bar A$ of $\BF$. Suppose that $\bar A$ and $\bar A'$ are free factors of $\BF$ filled by $A$ and let $B$ be such that $\BF=\bar A'*B$. The action of $A$ on the Bass-Serre tree associated to $\bar A'*B$ has a global fixed point because $A\subset\bar A'$. Hence so does the action of $\bar A$ because $\bar A$ is filled by $A$. It follows that $\bar A\subset\bar A'$. Arguing symmetrically we obtain the opposite inclusion, proving hence that $\bar A=\bar A'$, as we needed to show.
\end{proof}

Supposing now that $\Gamma$ is locally free, and fixing a finitely generated subgroup $H\subset\Gamma$, let $\CF(H)$ be the set of finitely generated subgroups of $\Gamma$ which contain $H$; note that each element of $\CF(H)$ is a finitely generated free group. We order $\CF(H)$ by inclusion and note that $\CF(H)$ is a directed set, meaning that any two elements have a common upper bound -- take the subgroup generated by both subgroups. Let also $\CK(H)$ be the subset of $\CF(H)$ consisting of finitely generated subgroups of $\Gamma$ filled by $H$. Note that $H\in\CK(H)$, meaning that $\CK(H)\neq\emptyset$.

We consider the map 
\begin{equation}\label{eq-core-map}
\core_H:\CF(H)\to\CK(H),\ \ G\mapsto\core_H(G),
\end{equation}
where $\core_H(G)$ is as in Lemma \ref{core-exists}. The argument used to prove Lemma \ref{core-exists} shows that the map $\core_H(\cdot)$ is order preserving, that is,
\begin{equation}\label{core-order}
G\subset K\Rightarrow \core_H(G)\subset\core_H(K).
\end{equation}
Moreover, the very definitions of $\core_H(G)$ and $\CK(H)$ imply  
\begin{equation}\label{core-fix1}
\core_H(G)=G\ \ \hbox{for all}\ G\in\CK(H).
\end{equation}
In particular,
\begin{equation}\label{core-fix}
\core_H(\core_H(G))=\core_H(G)
\end{equation}
for every $G\in\CF(H)$. 

Having these observations at our disposal, we are now ready to prove Proposition \ref{criterium}.

\begin{proof}[Proof of Proposition \ref{criterium}]
Since the set $\CF(H)$ of all finitely generated subgroups of $\Gamma$ containing $H$ is a directed system, it follows from (*) and equations \eqref{core-order} and \eqref{core-fix} that the set $\CK(H)$, the image of the map $\core_H(\cdot)$, has a unique maximal element which we will denote from now on by $\core(H)$. Note that, we have $\core(H)=\core_H(G)$ for each finitely generated subgroup $G\subset\Gamma$ containing $\core(H)$. In particular, $\core(H)$ is a free factor of any finitely generated subgroup $G$ of $\Gamma$ with $\core(H)\subset G$.

To prove Proposition \ref{criterium} we will construct inductively a certain a filtration 
$$G_1\subset G_2\subset G_3\subset\cdots$$
of $\Gamma$. Enumerate the elements of $\Gamma$ as $g_0=\Id,g_1,g_2,\dots$ and set $G_0=\Id$. Supposing that we have already constructed $G_i$, set
$$G_{i+1}=\core(\langle G_i,g_{i+1}\rangle).$$
Since $G_{i+1}$ contains $G_i=\core(\langle G_{i-1},g_i\rangle)$, it follows that $G_i$ is a free factor of $G_{i+1}$ for all $i$. We can hence construct a free basis of the group $\cup_{i=1}^\infty G_i$ as follows: take a free basis of $G_1$, extend it to a free basis of $G_2$ using that $G_1$ is a free factor of $G_2$, extend it to a free basis of $G_3$ using that $G_2$ is a free factor of $G_3$, etc... The existence of this free basis proves that $\cup_{i=1}^\infty G_i$ is a free subgroup of $\Gamma$. However, since $g_i\in G_i$ for all $i$, we have $\Gamma=\cup G_i$. This concludes the proof of Proposition \ref{criterium}.
\end{proof}

Before moving on observe that to obtain the desired conclusion in Proposition \ref{criterium} it suffices to prove that (*) holds for subgroups $H$ which contain a non-abelian free group.

\subsection{A topological lemma}
We will be constantly working with simplicial complexes -- we recall some terminology. Following \cite{Eells-Fuglede-book} we will say that a simplicial complex $X$ is {\em admissible} if (1) $X$ equals the closure of the maximal dimensional sets and (2) the codimension 2 skeleton does not locally separate $X$. Unless we mention it explicitly, all complexes will be assumed to be admissible.

Under a subcomplex $Z$ of a simplicial complex $X$ we understand a closed subset which with respect to some simplicial structure on $X$ is a union of simplices. The interior and boundary of a subcomplex are its interior and boundary in the usual sense of the pointset topology. We denote the boundary of a subcomplex $Z$ by $\D Z$ and say that it is {\em collared} if $X$ contains a subcomplex $U$ whose interior contains $\D Z$ and such that the inclusion $\D Z\hookrightarrow U$ admits a left-inverse $U\to\D Z$ with respect to which $U$ becomes an interval bundle over $\D Z$. If $Z\subset X$ is a subcomplex and $*\in Z$ is a base point, we denote by $[\pi_1(Z,*)\to\pi_1(X,*)]$ the image of the homomorphism $\pi_1(Z,*)\to\pi_1(X,*)$ induced by the inclusion. As so often, we will drop the reference to the base point, hoping that no confusion arises. In particular, $[\pi_1(Z)\to\pi_1(X)]=\Id$ just means that every loop in $Z$ is homotopically trivial in $X$.

\begin{lem}\label{BS-1}
Let $X$ be a connected 2-dimensional simplicial complex with dense interior, $Y\subset X$ a connected subcomplex with $[\pi_1(Y)\to\pi_1(X)]\neq\Id$, and suppose that $\pi_1(X)$ does not split relative to $[\pi_1(Y)\to\pi_1(X)]$. Suppose in addition that $Z\subset X\setminus Y$ is a connected subcomplex with dense interior, with collared boundary, and with 
\[
[\pi_1(Z)\to\pi_1(X)]=\Id.
\]
Then there is a connected subcomplex $\hat Z\subset X$ with dense interior, with $Y\cap\hat Z=\emptyset$, $Z\subset\hat Z$, 
\[
[\pi_1(\hat Z)\to\pi_1(X)]=\Id,
\]
and such that $\D\hat Z$ is a connected component of $\D Z$.
\end{lem}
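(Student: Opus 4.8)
The plan is to cut $X$ open along the collared subcomplex $\D Z$, sort the resulting pieces, and use the hypothesis that $[\pi_1(Y)\to\pi_1(X)]$ fills $\pi_1(X)$ to show that all but one of the pieces are invisible in $\pi_1(X)$.

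\emph{Setting up the decomposition.} First, $\D Z\neq\emptyset$: otherwise $Z$ is open and closed, hence $Z=X$ by connectedness, so $\pi_1(X)=[\pi_1(Z)\to\pi_1(X)]=\Id$, contradicting $[\pi_1(Y)\to\pi_1(X)]\neq\Id$. Since $\D Z\subset Z$, the inclusion of any component $C$ of $\D Z$ into $X$ factors through $Z$, so $[\pi_1(C)\to\pi_1(X)]=\Id$. Because $\D Z$ is collared we may cut $X$ along it, writing $X=Z\cup W_1\cup\dots\cup W_m$ with $W_1,\dots,W_m$ the components of $\overline{X\setminus Z}$, each component of $\D Z$ being adjacent to exactly one $W_j$. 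Let $\CT$ be the dual graph: a vertex for $Z$ and one for each $W_j$, an edge for each component of $\D Z$ joining $Z$ to the corresponding $W_j$; it is connected since $X$ is. Van Kampen applied to this graph of spaces realizes $\pi_1(X)$ as the fundamental group of the corresponding graph of groups, equivalently gives an action of $\pi_1(X)$ on a tree $T$ in which all edge stabilizers are trivial (being conjugates of the images $[\pi_1(C)\to\pi_1(X)]=\Id$) and the vertex stabilizers are conjugates of $\Id=[\pi_1(Z)\to\pi_1(X)]$ and of the subgroups $[\pi_1(W_j)\to\pi_1(X)]$. Since $Y$ is connected, disjoint from $\D Z$, and disjoint from $Z$, it is contained in the interior of a single $W_j$; relabel so that $Y\subset W_1$. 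Then $[\pi_1(Y)\to\pi_1(X)]\subseteq[\pi_1(W_1)\to\pi_1(X)]$, and in particular the right-hand side is nontrivial.

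\emph{The main step.} I claim: (a) no $W_j$ is joined to $Z$ by two distinct components of $\D Z$, so $\CT$ is a tree, and being a tree all of whose edges meet the vertex $Z$ it is a star centred at $Z$, whence each $W_j$ meets $Z$ in a single component $C_j$ of $\D Z$; and (b) $[\pi_1(W_j)\to\pi_1(X)]=\Id$ for all $j\neq1$. For (a): if some $W_j$ were joined to $Z$ by distinct components $C,C'$, then cutting $X$ only along $C$ would not disconnect it (one can still pass through $W_j$ via $C'$), so $\pi_1(X)$ would be an HNN extension over the trivial image of $\pi_1(C)$, i.e.\ $\pi_1(X)\cong A\ast\BZ$; since $Y$ misses $C$, $[\pi_1(Y)\to\pi_1(X)]$ would be conjugate into $A$, i.e.\ elliptic in this nontrivial essential free splitting, contradicting the hypothesis. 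For (b): once $\CT$ is a star, the component $C_j$ separates $X$ into $W_j$ and $\overline{X\setminus W_j}$, and since the image of $\pi_1(C_j)$ in $\pi_1(X)$ is trivial this gives a genuine free product $\pi_1(X)=A_j\ast B_j$ with $A_j=[\pi_1(W_j)\to\pi_1(X)]$ and $B_j=[\pi_1(\overline{X\setminus W_j})\to\pi_1(X)]$; here $B_j\supseteq[\pi_1(Y)\to\pi_1(X)]\neq\Id$, so if $A_j\neq\Id$ this would be a nontrivial essential free splitting in which $[\pi_1(Y)\to\pi_1(X)]$ is elliptic, a contradiction. Hence $A_j=\Id$.

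\emph{Constructing $\hat Z$.} Set $\hat Z:=Z\cup W_2\cup\dots\cup W_m$, a connected subcomplex containing $Z$. From the star structure one reads off $W_1\cap(Z\cup W_2\cup\dots\cup W_m)=W_1\cap Z=C_1$ and $W_1\cap W_j=\emptyset$ for $j\neq1$, so $X\setminus\hat Z=W_1\setminus C_1=\mathrm{int}_X W_1$; hence $\hat Z=\overline{X\setminus W_1}$, $\partial_X\hat Z=\partial_X W_1=C_1$ is a single component of $\D Z$, and $Y\subset\mathrm{int}_X W_1$ is disjoint from $\hat Z$. Dense interior of $\hat Z$ follows from dense interior of $Z$ and of the $W_j$ (the latter because, away from the collar of $\D Z$, each $W_j$ is the closure of an open subset of $X$), patched along the collared $C_j$. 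Finally the dual graph of the decomposition $\hat Z=Z\cup W_2\cup\dots\cup W_m$ is the star $\CT$ with the leaf $W_1$ deleted, again a tree, so by van Kampen $\pi_1(\hat Z)$ is generated by conjugates of $\pi_1(Z)$ and of $\pi_1(W_2),\dots,\pi_1(W_m)$, each of which maps trivially to $\pi_1(X)$ by step (b) (and by hypothesis for $Z$). Therefore $[\pi_1(\hat Z)\to\pi_1(X)]=\Id$, and $\hat Z$ has all the required properties.

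\emph{Expected difficulty.} The delicate part is the main step: one must ensure that the free splittings extracted by cutting are genuinely essential and nontrivial rather than collapsing. This is handled by passing to the minimal invariant subtree and observing that a vertex carrying a nontrivial stabilizer projects onto it, so that stabilizer — and hence $[\pi_1(Y)\to\pi_1(X)]$ — stays elliptic there; the degenerate cases, where an edge group maps onto a vertex group, must be disposed of separately, but in those the relevant image $[\pi_1(W_j)\to\pi_1(X)]$ vanishes for trivial reasons. Minor technicalities: if $X$ is not locally finite then $\D Z$ may have infinitely many components, so $\CT$ should be allowed to be infinite (harmless, and moot in the intended application where $X$ is finite), and one may need to subdivide so that the collar of $\D Z$ and the pieces $Z,W_j$ are honest subcomplexes.
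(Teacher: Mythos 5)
Your proof is correct and takes essentially the same route as the paper: cut $X$ along the collared $\partial Z$, apply Seifert--van Kampen to get a graph-of-groups decomposition of $\pi_1(X)$ with trivial edge groups, and invoke the no-splitting-relative-to-$[\pi_1(Y)\to\pi_1(X)]$ hypothesis to force all pieces not containing $Y$ to have trivial image and all but one boundary component to disappear. The only difference is granularity: the paper defines $\hat Z$ immediately as the complement of the component of $X\setminus Z$ containing $Y$ and runs a single two-vertex van Kampen argument, whereas you work component by component and first prove the dual graph is a star, arriving at the same $\hat Z$.
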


\begin{proof}
Let $U$ be the connected component of $X\setminus Z$ containing $Y$ and let $\hat Z$ be its complement. The subcomplex $\hat Z$ has dense interior, contains $Z$, is disjoint of $Y$ and its boundary is contained in $\D Z$. Moreover, Seifert--van Kampen yields a graph of groups decomposition of $\pi_1(X)$ with vertex groups $[\pi_1(U)\to\pi_1(X)]$ and $[\pi_1(\hat Z)\to\pi_1(X)]$ and whose edge groups are the groups $[\pi_1(\Sigma)\to\pi_1(X)]$, where $\Sigma$ ranges over the connected components of $\D\hat Z$. The assumption $[\pi_1(Z)\to\pi_1(X)]=\Id$ implies that the edge groups are trivial. 
Thus we have a graph of groups decomposition with trivial edge groups, with two vertex groups $[\pi_1(U)\to\pi_1(X)]$ and $[\pi_1(\hat Z)\to\pi_1(X)]$, and with an edge connecting those two vertices for each connected component of $\D\hat Z$. However, since $Y\subset U$, it follows from the assumption that $\pi_1(X)$ does not split relative to the non-trivial subgroup $[\pi_1(Y)\to\pi_1(X)]$, that this graph of groups must be trivial, meaning that $[\pi_1(\hat Z)\to\pi_1(X)]=\Id$ and that there is a single edge, meaning that $\D\hat Z$ is connected, as we needed to prove. \end{proof}

\section{Proof of Theorem \ref{main-dim}}\label{sec-proof-thm}
The next two sections are devoted to the proof of the following claim:

\begin{prop}\label{diam-bound}
Let $\Gamma\subset\Isom_+(\BH^n)$ be as in Theorem \ref{main-dim}, $H\subset\Gamma$ a finitely generated non-elementary subgroup and $*\in\BH^n$ a base point. Then there is a constant $d>0$ such that each finitely generated intermediate subgroup $H\subset G\subset\Gamma$ filled by $H$ is generated by elements $g$ satisfying $d_{\BH^n}(*,g *)\le d$.
\end{prop}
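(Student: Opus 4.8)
The strategy, as sketched in the introduction, is to produce a 2-dimensional complex $X_G$ sitting inside $\mathbb H^n/G$ with $\pi_1(X_G)=G$, whose 2-dimensional Hausdorff area is bounded above by a constant depending only on $H$ and $\Gamma$ (not on $G$), and then to combine this global upper bound with a uniform lower bound for the area of small balls to extract a diameter bound. The Besson--Courtois--Gallot map is the source of the area bound: since $\dim_{\mathcal H}(\Lambda_\Gamma)<1$, the work of Besson--Courtois--Gallot provides a map $\mathbb H^n/\Gamma\to\mathbb H^n/\Gamma$ which strictly decreases 2-dimensional Hausdorff measure; pulling back an a priori complex through this map and iterating gives complexes of controlled area. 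I would first set up, in the later Section \ref{sec:BCG}, the existence of $X_G\subset\mathbb H^n/G$ carrying $\pi_1$ faithfully and with $\area(X_G)\le C=C(H,\Gamma)$, and moreover arrange that $X_G$ solves a Dirichlet-type problem so that it is, in a suitable sense, area-minimizing rel a fixed subcomplex coming from $H$.

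Next I would exploit the hypothesis that $H$ \emph{fills} $G$. The point is that filling is exactly the algebraic condition preventing the kind of pinching that would let area escape: if a small ball $B$ in $X_G$ centered at a point in the $\epsilon_0$-thick part of $\mathbb H^n/G$ contained only a negligible amount of area, then one could cut along $\partial B$ and use Lemma \ref{BS-1} (with $Y$ the subcomplex realizing $H$ and $Z$ a neighborhood of the thin or low-area piece) to split $G$ as a free product relative to $H$, contradicting that $H$ fills $G$. Concretely, the role of Lemma \ref{BS-1} is to show that the component $\hat Z$ produced there would be $\pi_1$-trivial in $X_G$, so discarding it and filling in by a disk would decrease area while preserving $\pi_1$ — impossible if $X_G$ was chosen minimizing. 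Hence there is $\eta=\eta(H,\Gamma)>0$ so that every metric ball of radius, say, $1$ centered in the thick part of $X_G$ has area at least $\eta$.

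With the two bounds $\area(X_G)\le C$ and (area of unit balls in the thick part)$\ge\eta$ in hand, the diameter estimate is the standard ball-packing argument: a maximal $2$-separated set of thick points in $X_G$ has at most $C/\eta$ elements (the unit balls around them are disjoint and each has area $\ge\eta$), so the thick part of $X_G$ has diameter $\le 4C/\eta+4$; the Margulis lemma then controls the thin part, since each Margulis tube or cusp neighborhood contributes boundedly to the diameter and its fundamental group is carried by elements of bounded translation length. Choosing a spanning tree in a triangulation of $X_G$ and reading off generators of $\pi_1(X_G)=G$ from edges outside the tree, each such generator is represented by a loop of length at most twice the diameter, hence lifts to an element $g\in\Gamma$ with $d_{\mathbb H^n}(*,g*)\le d$ for $d=d(H,\Gamma)$ the resulting constant; moving the basepoint from a preimage of the thick part to the fixed $*$ costs only an additive constant depending on $H$ and $\Gamma$.

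The step I expect to be the main obstacle is the construction of $X_G$ with an area bound \emph{independent of $G$}: the Besson--Courtois--Gallot map lives on $\mathbb H^n/\Gamma$ and only produces strict area decrease, so to get a uniform constant one must start from a fixed finite-area model built from $H$ (realizing $[\pi_1(H)\to\pi_1(\Gamma)]$) and push it forward, controlling how the complex representing the larger group $G$ interacts with this fixed model under the (measure-decreasing) iteration — in particular ensuring that the Dirichlet/minimization problem has a solution of the right homotopy type and that the minimizer can be taken to be an admissible $2$-complex rather than a general rectifiable set. The second delicate point is making the filling hypothesis interact cleanly with the cut-and-paste: one must verify that the subcomplexes $Z$, $\hat Z$ arising from a low-area region genuinely satisfy the collaredness and denseness hypotheses of Lemma \ref{BS-1}, which is where admissibility of the complexes is used.
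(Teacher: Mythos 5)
Your outline matches the paper's strategy at a high level (BCG map for a uniform area bound, an energy/Dirichlet minimizer for convexity, Lemma \ref{BS-1} plus the filling hypothesis to get a lower area bound on small balls, and a packing argument), but the step you wave at as ``the Margulis lemma then controls the thin part, since each Margulis tube or cusp neighborhood contributes boundedly to the diameter'' is precisely where the genuine work lies, and as stated it is wrong.

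First, Margulis tubes do \emph{not} have uniformly bounded diameter: the tube around a closed geodesic of length $\ell$ has diameter going to infinity as $\ell\to 0$. To bound the tube diameters you need a uniform positive lower bound on the lengths of the short geodesics that the image $\tau(X_G)$ can approach. The paper gets this by showing that the relevant part of $\tau(X_G)$ lies in the set $\{z\in M_G : d_{\rel M_G^{<\mu}}(z,*)\le C\}$ and that each thin component met there projects, under $M_G\to M_\Gamma$, into the fixed compact set $\{z\in M_\Gamma : d_{\rel M_\Gamma^{<\mu}}(z,*)\le C\}$; compactness of the latter (which uses that $\Gamma$ is purely hyperbolic) gives a uniform systole bound, independent of $G$. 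Without this detour through the ambient $M_\Gamma$, the packing argument only controls the \emph{relative} diameter, not the actual diameter, and your ``loops of length at most twice the diameter'' step has no content.

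Second, cusp neighborhoods are non-compact and contribute \emph{unboundedly} to the diameter, so the sentence is simply false when $\Gamma$ has parabolics. In that case the set of points at bounded relative distance from $*$ is itself unbounded, and no bound of the form $d_{M_G}(\tau(x),*)\le D$ holds for all $x\in X_G$. The paper handles this by an entirely separate argument: it cuts $X_G$ along $\tau^{-1}(\partial M_G^{cusp\ge\mu})$, obtains a graph-of-groups decomposition of $G$ whose edge and cusp-vertex groups are parabolic, and uses the filling hypothesis together with Kulkarni's theorem to show there is a single non-cuspidal vertex group $\hat H$; one then bounds generators of $\hat H$ as in the purely hyperbolic case, and shows there are only finitely many possible $G$'s amalgamating onto a given $\hat H$ over parabolics. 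This is not a cosmetic variation -- it is a different mechanism (algebraic rigidity of the amalgamation rather than a geometric diameter bound), and your outline does not anticipate it.

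A smaller point: ``discarding $\hat Z$ and filling in by a disk would decrease area'' is not how the minimality is exploited; the paper instead uses the convexity property of the Dirichlet minimizer (projection to a convex ball decreases energy) to force $\tau(\hat\Omega_t)$ into a small ball and thereby derive the area lower bound. Capping by a disk would change the homeomorphism type of $X_G$ and is not available in the class $\mathcal F(X_G;\tau^H)$.
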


\begin{bem}
The chosen base point $*\in\BH^n$ determines, for every torsion free subgroup $G\subset\Isom_+(\BH^n)$, a base point in $M_G=\BH^n/G$ which we still denote by $*$. Using this base point we obtain an identification between $G$ and $\pi_1(M_G,*)$. From now on we assume that each base point in an appearing hyperbolic manifold is the projection of the point $*$ chosen in Proposition \ref{diam-bound}.
\end{bem}

Assuming Proposition \ref{diam-bound}, we prove Theorem \ref{main-dim}:

\begin{named}{Theorem \ref{main-dim}}
Let $\Gamma\subset\Isom_+(\BH^n)$ be a discrete, torsion free, and non-elementary group of orientation preserving isometries of $\BH^n$. If the limit set $\Lambda_\Gamma\subset\BS^{n-1}$ has Hausdorff dimension $\dim_\CH(\Lambda_\Gamma)<1$, then $\Gamma$ is a free group.
\end{named}
\begin{proof}
Let $\Gamma\subset \Isom_+(\BH^n)$ be as in the statement and recall that, as mentioned in the introduction, $\Gamma$ is locally free. To prove that $\Gamma$ is actually free we will use the criterium given in Proposition \ref{criterium}. Suppose thus that $H\subset\Gamma$ is a finitely generated non-elementary subgroup and let 
$$H\subset G_1\subset G_2\subset\cdots$$
be further finitely generated subgroups filled by $H$. By Proposition \ref{diam-bound} we know that each $G_i$ is generated by elements which translate the base point $*$ at most a uniformly bounded amount $d$. Since the discrete group $\Gamma$ contains only finitely many elements with $d_{\BH^n}(*,g *) \le d$ it follows that the sequence stabilizes, as we needed to prove.
\end{proof}


\section{Small area extensions}
\label{sec:BCG}

In this section we suppose the notation of the previous section and we fix an isomorphism between $H$ and the fundamental group of a graph $Y$ -- this is possible because $\Gamma$ is locally free: 
\begin{itemize}
\item[($\star$)] $\Gamma\subset\Isom_+\BH^n$ is as in the statement of the Theorem \ref{main-dim}, $H\subset\Gamma$ a non-elementary finitely generated subgroup, $Y$ a finite connected graph, $*\in Y$ a base point, and $\tau^H:(Y,*)\to(M_\Gamma,*)$ a $\pi_1$-injective piecewise smooth embedding with $\tau^H_*(\pi_1(Y,*))=H$. 
\end{itemize}

With notation as in ($\star$), suppose that $G\subset\Gamma$ is a finitely generated subgroup containing $H$. Under an {\em extension of $\tau^H$ to $G$} we will understand a connected admissible 2-dimensional simplicial complex $X_G$ containing a collared $Y\times [-1,1]\subset X$ copy of $Y=Y\times \{0\}$, together with a $\pi_1$-injective map
$$\tau \colon X_G\to M_\Gamma$$ 
extending $\tau^H$ and satisfying $\tau_*(\pi_1(X_G,*)) = G$. Recall that a complex $X_G$ is admissible if $2$-simplices are dense in $X_G$ and the codimension 2 skeleton does not separate $X_G$ locally. 

\begin{bem}
Each extension $\tau \colon X_G\to M_\Gamma$ of $\tau^H$ to $G$ lifts uniquely to a map $\tau \colon(X_G,*)\to(M_G,*)$, which we will not distinguish from the original map.
\end{bem}

Taking into account that the group $G$ is free it is easy to construct an extension of $\tau^H$. Start by taking a graph $Z$ and a map $\tau^Z:Z\to M_\Gamma$ inducing an isomorphism between $\pi_1(Z)$ and $G$, choose a map $Y\to Z$ representing the injection $H\to G$, let $X_G$ be the 2-complex obtained by gluing $Y\times[-1,1]$ to $Z$ via $Y\times\{1\}=Y \to Z$, and extend the maps $\tau^H$ and $\tau^Z$ to a map $\tau:X_G\to M_\Gamma$ in the correct homotopy class.
\medskip

Our goal is to prove that, under suitable conditions, there are extensions $\tau\colon X_G\to M_G$ of $\tau^H$ to $G$ which have small \emph{area}
\[
\Area(\tau)\stackrel{\tiny\hbox{def}}= \int_{\tau(X_G)} \vert\tau^{-1}(y)\vert \mathrm{d}\CH^2(y),
\]
where $\CH^2$ is the Hausdorff 2-measure in the image $M_G$. Besides having small area, we also want our extension to satisfy a certain convexity property:

\begin{prop}\label{prop:good extension}
With notation as in ($\star$) there exists a constant $C>0$ so that for each finitely generated subgroup $G\subset\Gamma$ containing $H$ there is an extension
$$\tau\colon X_G\to M_G$$
of $\tau^H$ with $\Area(\tau) < C$. Moreover, if $B\subset M_G$ is a convex submanifold and $K\subset X_G\setminus Y$ is a compact subcomplex with dense interior, whose boundary $\D K$ is mapped by $\tau$ into $B$, i.e.\;$\tau(\D K)\subset B$, and such that the restriction $\tau\vert_K$ of $\tau$ to $K$ is homotopic relative to $\D K$ to a map with values in $B$, then actually $\tau(K)\subset B$.
\end{prop}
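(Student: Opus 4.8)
The plan is to construct the extension in two stages: first obtain a small-area extension of $\tau^H$ using the Besson--Courtois--Gallot technology, then modify it to obtain the convexity property.

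\medskip

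\noindent\textbf{Stage 1: small area.} Since $\dim_\CH(\Lambda_\Gamma)<1<2$, the result of Besson--Courtois--Gallot \cite{BCG} provides a smooth map $F\colon M_\Gamma\to M_\Gamma$, homotopic to the identity, which strictly decreases the $2$-dimensional Hausdorff measure; more precisely, for any piecewise smooth surface $S\subset M_\Gamma$ one has $\CH^2(F(S))\le c\cdot\CH^2(S)$ with $c$ small, and iterating $F$ we may take $c$ as small as we like. Starting from the naive extension $\tau_0\colon X_G\to M_\Gamma$ built at the end of the section (glue $Y\times[-1,1]$ to a graph $Z$ carrying $G$ and extend maps), I would first put $\tau_0$ into a position where the ``graph part'' $Z$ has controlled length. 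This is where the hypothesis that $H$ fills $G$ and the diameter reasoning of Proposition \ref{diam-bound} enter --- but since Proposition \ref{diam-bound} is proved \emph{using} this proposition, the correct logical order is that here we only need an \emph{a priori} bound independent of $G$ on the area, not on $G$ itself. To get this, one replaces $\tau_0$ by a harmonic (energy-minimizing) representative, or more concretely: homotope $\tau_0$ so that it factors, up to the collar $Y\times[-1,1]$, through the convex core of $M_G$, then apply a large iterate $F^k$ of the BCG map. The collar $Y\times[-1,1]$ contributes a bounded amount of area depending only on $\tau^H$ (hence only on $H$), and the remaining $2$-simplices have area bounded by $c^k$ times the area of the original simplices; choosing $k$ large and then straightening the maps on simplices to totally geodesic ones, the total area is bounded by a constant $C$ depending only on $H$ and $\Gamma$. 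Some care is needed to keep the map $\pi_1$-injective throughout and to retain admissibility of the complex; one handles this by performing the homotopies simplex by simplex and subdividing when necessary.

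\medskip

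\noindent\textbf{Stage 2: convexity.} Given the small-area extension $\tau\colon X_G\to M_G$ from Stage 1, together with a convex submanifold $B\subset M_G$ and a compact subcomplex $K\subset X_G\setminus Y$ as in the statement, I want to replace $\tau$ on $K$ by a map with image in $B$ without increasing the area and without destroying the other properties. The hypothesis is precisely that $\tau|_K$ is homotopic rel $\D K$ to a map into $B$; let $\tau'\colon K\to B$ be such a map. The key point is that the nearest-point projection $\pi_B\colon M_G\to B$ onto a convex set is distance-nonincreasing, hence area-nonincreasing, and fixes $B$ pointwise. If $\tau(K)$ were not already contained in $B$, I would compose: define a new extension $\tilde\tau$ agreeing with $\tau$ on $X_G\setminus \mathrm{int}(K)$ and equal to $\pi_B\circ\tau$ on $K$. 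Since $\tau(\D K)\subset B$, the projection $\pi_B$ fixes $\tau(\D K)$, so $\tilde\tau$ is continuous (and can be made piecewise smooth) across $\D K$; it is homotopic to $\tau$ because $\pi_B$ is homotopic to the identity through maps, and on $K$ this homotopy can be taken rel $\D K$ using the given homotopy of $\tau|_K$ to a map into $B$ together with convexity of $B$. Thus $\tilde\tau$ is still an extension of $\tau^H$ to $G$, still $\pi_1$-injective, and has $\Area(\tilde\tau)\le\Area(\tau)<C$. This shows that among extensions realizing the infimum of the area (or merely a sufficiently small area), one can always push $K$ into $B$; running this argument for an area-minimizing extension --- whose existence is guaranteed by a compactness/lower-semicontinuity argument for $2$-dimensional area in the fixed target $M_G$ --- forces $\tau(K)\subset B$ outright. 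Hence the proposition holds with the area-minimizing extension, whose area is $<C$ by Stage 1.

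\medskip

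\noindent\textbf{Main obstacle.} The delicate part is Stage 1: making the area bound $C$ genuinely independent of the (arbitrarily complicated, possibly high-rank-free) subgroup $G$. The naive extension has area growing with the complexity of $G$, and the BCG map only \emph{scales} area down by a fixed factor per iteration, so one must simultaneously control the number of simplices and their individual areas. The resolution is that the BCG map decreases area by a factor $c<1$ regardless of how many simplices there are, so after enough iterations the total area of the ``$G$-part'' is pushed below any fixed threshold, and the only irreducible contribution is the collar over $Y$, which depends on $H$ alone. Managing the homotopies so that $\pi_1$-injectivity and admissibility survive --- in particular ensuring the codimension-$2$ skeleton does not locally separate after straightening --- is the technical heart, and is handled by working one simplex at a time and subdividing. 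A secondary subtlety is justifying the existence of an area-minimizing extension used in Stage 2; this requires a standard (but nontrivial) lower-semicontinuity argument for the $2$-dimensional Hausdorff area functional on the space of piecewise smooth maps from admissible $2$-complexes into the fixed manifold $M_G$, together with a bound on the combinatorial type of a minimizing sequence coming from the area bound of Stage 1.
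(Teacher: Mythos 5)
Your Stage 1 is essentially the paper's Lemma \ref{lemma:BCG}: iterate the BCG natural map $F$ (which multiplies $2$-dimensional area by a factor $\mu<1$) to drive the area of the ``$G$-part'' down, with the only irreducible contribution being a collar over $Y$; this gives a bound depending only on $H$ and $\Gamma$. (Your worries about admissibility and $\pi_1$-injectivity during this step are unnecessary --- one simply post-composes a fixed extension with $F^k$, which changes neither the domain $X_G$ nor the homotopy class --- and the detour through the convex core is not needed.)

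Stage 2 contains a genuine gap. You need a single extension $\tau$ that satisfies the convexity property for \emph{every} choice of $B$ and $K$, and your argument hinges on the existence of an \emph{area-minimizing} extension. This is a Plateau-type problem for maps from a $2$-complex into a hyperbolic manifold, and it is not ``standard but nontrivial'' --- direct minimization of the $2$-dimensional Hausdorff area functional among Sobolev maps has no good lower-semicontinuity or compactness theory without imposing extra structure, which is exactly why the classical Douglas--Rad\'o and Morrey approaches minimize Dirichlet \emph{energy} rather than area. Moreover, even granting an area minimizer, the projection $\pi_B$ onto a convex set need not \emph{strictly} decrease the $2$-Jacobian off $B$ in a way that forces $\tau(K)\subset B$; what $\pi_B$ strictly decreases is \emph{distance} in $\hat M\setminus B$, and hence energy, not area directly.

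The paper resolves both points simultaneously by changing the functional. Starting from the area-bounded extension $\tau$ of Stage 1, perturb it to a piecewise smooth immersion, pull back the hyperbolic metric of $M_G$ to get a Riemannian metric $\rho$ on $X_G$, and take the energy minimizer $\tau'\in\CF(X_G;\tau^H)$ for the $2$-energy with respect to $\rho$ (Eells--Fuglede guarantees existence here, unlike for area). The convexity property then follows exactly from strict contraction of distances by $\pi_B$ off $B$, which strictly decreases energy, contradicting minimality. Finally, the chain $\Area(\tau')\le E(\tau';\rho)\le E(\tau;\rho)=\Area(\tau)$ recovers the area bound: the first inequality is $\Jac_2\le\norm{D\cdot}^2_\rho$, and the crucial equality $E(\tau;\rho)=\Area(\tau)$ holds precisely because $\tau$ is \emph{conformal} with respect to the metric $\rho$ it was used to define. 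That conformality trick is what you are missing: it is what lets one minimize energy (where minimizers exist and convexity arguments bite) while still controlling area.
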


To prove Proposition \ref{prop:good extension} we use a construction of Besson, Courtois, and Gallot \cite{BCG} to prove that there is an extension with uniformly bounded area. At this point we would like to obtain the desired extension by solving the Plateau problem, but since it is much simpler and actually sufficient for out purposes, we will take $\tau$ to be a solution of the Dirichlet problem with respect to some suitably chosen Riemannian metric on the complex $X_G$.

\subsection{Bounded area extensions}
With notation as in ($\star$) suppose that $G\subset\Gamma$ is a finitely generated group containing $H$. The limit set $\Lambda_G$ of $G$ has Hausdorff dimension bounded from above by that of $\Lambda_\Gamma$:
\[
\dim_\CH(\Lambda_G)\le\dim_\CH(\Lambda_\Gamma)\stackrel{\tiny{\hbox{def}}}=\lambda<1.
\]
Being a finitely generated group whose limit set has Hausdorff dimension less than $1$, $G$ is geometrically finite \cite{KK}. In particular, $G$ has critical exponent 
\[
\delta_G=\dim_\CH(\Lambda_G)
\]
and there is a $G$-invariant conformal density of exponent $\delta_G$, the Patterson-Sullivan measure of $G$ \cite{Sullivan84}. A construction of Besson-Courtois-Gallot \cite{BCG} (see also \cite{SoutoNM}) shows that the existence of such a conformal density implies the existence of a smooth map, the so-called {\em natural map},
$$F\colon M_G \to M_G$$ 
homotopic to the identity and whose $p$-Jacobian is bounded everywhere by
\begin{equation}\label{eq-Jac-bound}
\Jac_pF\le\left(\frac{\delta_G+1}p\right)^p.
\end{equation}
Here the $p$-Jacobian is the function on the $p$-Grassmanian of $M_G$ which associates to each $p$-dimensional subspace $V\subset T_xM$ the absolute value of the determinant of $dF_x\colon V\to dF_x(V)$. More formally, if $V\subset T_xM$ is a subspace of dimension $p$, then $\Jac_pF(V)$ is the norm of the wedge product $dF_x(e_1)\wedge\dots\wedge dF_x(e_p)$, where $e_1,\dots,e_p$ is an orthonormal basis of $V$. Armed with the natural map we prove:

\begin{lem}
\label{lemma:BCG}
There exists a constant $C>0$ so that, for each finitely generated subgroup $G$ of $\Gamma$ containing $H$, there is an extension $\tau\colon X_G \to M_G$ of $\tau^H$ to $G$ with $\Area(\tau)<C$.
\end{lem}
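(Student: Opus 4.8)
The plan is to start from the naive extension $\tau_0 \colon X_G \to M_\Gamma$ described just before the subsection — obtained by gluing the collar $Y\times[-1,1]$ to a graph $Z$ carrying $G=\pi_1(Z)$ — lift it to $\tau_0 \colon X_G \to M_G$, and then improve it using the Besson--Courtois--Gallot natural map $F\colon M_G\to M_G$. Since $\lambda<1$ we have $\delta_G = \dim_\CH(\Lambda_G)\le\lambda<1$, so taking $p=2$ in \eqref{eq-Jac-bound} gives the uniform bound $\Jac_2 F \le \bigl((\delta_G+1)/2\bigr)^2 \le 1$, independently of $G$. The composition $\tau_1 = F\circ\tau_0$ is still an extension of a map homotopic to $\tau^H$ (because $F\simeq\Id$), and one computes from the area formula that $\Area(F\circ\tau_0) \le \sup \Jac_2 F\cdot \Area(\tau_0) \le \Area(\tau_0)$; more precisely, $F$ decreases the $2$-dimensional Hausdorff measure of the image, which is exactly the quantity controlling $\Area$. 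The issue is that this bounds $\Area(\tau_1)$ by $\Area(\tau_0)$, and $\Area(\tau_0)$ is \emph{not} uniform in $G$ — the graph $Z$ can be arbitrarily complicated.

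The fix is to arrange the extension so that its area is carried entirely by the fixed graph $Y$, which has zero $2$-measure, plus a controlled contribution. Here is how I would do it. Rather than mapping $X_G$ by an arbitrary map and then applying $F$, I would first push the naive map off $Y$: the part of $X_G$ outside the collar, namely (a thickening of) $Z$, is a graph, so $\tau_0$ restricted there already has image of Hausdorff $2$-measure zero. The collar $Y\times[-1,1]$ is the only genuinely $2$-dimensional piece, and $\tau_0$ sends it into a neighbourhood of $\tau^H(Y)\subset M_\Gamma$; its area is bounded by a constant depending only on $\tau^H$ and $Y$, hence only on $H$ — which is fixed throughout by ($\star$). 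Composing with $F$ cannot increase this. Thus $\Area(F\circ\tau_0)\le \Area(\tau_0\vert_{Y\times[-1,1]}) =: C$, a constant depending only on $H$ and $\Gamma$, and we set $\tau = F\circ\tau_0$. One must also check that $\tau$ still qualifies as an extension in the required sense: it is $\pi_1$-injective with $\tau_*(\pi_1(X_G))=G$ (true for $\tau_0$, preserved by composing with the homotopy equivalence $F$), it restricts correctly on the collar to something homotopic to $\tau^H$, and by a homotopy supported on the collar we can assume it agrees with $\tau^H$ on $Y=Y\times\{0\}$ exactly — this last normalization costs only a bounded amount of additional area, again depending only on $H$, which we absorb into $C$.

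The main obstacle is precisely the bookkeeping that makes $C$ independent of $G$: one has to be careful that the homotopies used to (i) straighten $\tau_0$ to agree with $\tau^H$ on $Y$ and (ii) make $\tau$ a genuine admissible simplicial complex map do not reintroduce $G$-dependence into the area. This is handled by noting that all such homotopies can be taken to be supported on the fixed collar $Y\times[-1,1]$ and on its image, whose geometry is determined by $H$ and $\Gamma$ alone, while the only thing that varies with $G$ — the graph $Z$ — contributes nothing to the $2$-dimensional area because it is one-dimensional and $F$ is Lipschitz on the compact manifold $M_\Gamma$... wait, $M_G$ need not be compact, so instead one uses that $F$ is smooth and $\Jac_1 F$, $\Jac_2 F$ are bounded by the BCG estimate, so the image of a graph stays a set of $2$-measure zero. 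Collecting these observations yields the uniform bound $\Area(\tau)<C$, completing the proof.
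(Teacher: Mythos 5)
The proposal misses the crucial numerical point in the Besson--Courtois--Gallot estimate and, because of this, is forced into a ``fix'' that has a genuine gap.

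First, the estimate: you write $\Jac_2 F \le \bigl((\delta_G+1)/2\bigr)^2 \le 1$ and then conclude only $\Area(F\circ\tau_0)\le\Area(\tau_0)$. But $\delta_G\le\lambda<1$, so in fact $\Jac_2 F\le\bigl((\lambda+1)/2\bigr)^2=:\mu$ with $\mu<1$ \emph{strictly}, and this strict contraction is the entire content of the lemma. The paper's proof starts from an \emph{arbitrary} extension $\tau$ (with no area control at all), applies $F$, and re-glues $F\circ\tau$ to $\tau^H$ on the collar $Y\times[-1/2,1/2]$ using a once-and-for-all fixed homotopy $\nu$ between $\tau^H$ and $F\circ\tau^H$ (which has finite area depending only on $H$ and $\Gamma$). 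This gives a new extension $\tau'$ with $\Area(\tau')\le\mu\Area(\tau)+\Area(\nu)$. Because $\mu<1$, if $\Area(\tau)>2\Area(\nu)/(1-\mu)$ then $\Area(\tau')\le\frac{1+\mu}{2}\Area(\tau)$, a contraction by a factor bounded away from $1$; iterating drives the area below the uniform constant $C=2\Area(\nu)/(1-\mu)$. You never invoke the strict inequality, so this mechanism is unavailable to you, which is precisely why you then feel compelled to look for an extension whose area is small from the start.

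That ``fix'' does not work as stated. You argue that the only $2$-dimensional contribution comes from the collar $Y\times[-1,1]$ and that this is ``bounded by a constant depending only on $\tau^H$ and $Y$.'' But the collar carries a homotopy from $\tau^H$ on $Y\times\{0\}$ to the composition $Y\to Z\to M_\Gamma$ on $Y\times\{1\}$, and the target of that composition depends on how the chosen graph $Z$ sits in $M_G$, which varies with $G$. There is no reason for $\tau(Y\times\{1\})$ to lie near $\tau^H(Y)$, and in general one cannot even choose $Z\subset M_G$ so that $\tau^H(Y)$ is a subgraph of $Z$ with $\pi_1(Z)\to G$ an isomorphism (take, e.g., $H=\langle a^2,b\rangle\subset G=\langle a,b\rangle$: an embedded circle cannot ``wrap twice'' inside a graph). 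So the homotopy on $Y\times[0,1]$ is generally nontrivial and its area is not controlled independently of $G$. The constant $C$ you claim ``depends only on $H$'' is therefore unjustified, and this is exactly the uniformity that the lemma is about. The paper's iteration with $\mu<1$ is what bypasses this problem.
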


\begin{proof}
We are going to use the natural map $F\colon M_G\to M_G$ to prove that any arbitrary extension $X_G\to M_G$ of $\tau^H$ to $G$ is homotopic to a map with the desired properties. To that end note that \eqref{eq-Jac-bound} yields for $p=2$ the bounds
\[
\Jac_2F\le\left(\frac{\delta_G+1}2\right)^2\le\left(\frac{\lambda+1}2\right)^2\stackrel{\tiny\hbox{def}}=\mu<1,
\]
where, as before, $\lambda=\dim_\CH(\Lambda_\Gamma)$ is the Hausdorff dimension of the limit set of our ambient group $\Gamma$. In particular, this implies, for any map $\tau \colon X_G \to M_\Gamma$ of finite area, that 
$$\Area(F\circ \tau) \le \mu\cdot \Area(\tau).$$
Since $F$ is homotopic to the identity and $Y$ is collared in $X$, we may fix a map $\nu \colon Y\times [-1/2,1/2] \to M_\Gamma$ of finite area so that 
\[
\nu|_{Y\times \{0\}} = \tau^H\ \ \hbox{and}\ \ \nu|_{Y\times \{\pm 1/2\}} = F\circ \tau^H.
\] 

Suppose now that $\tau \colon X_G \to M_G$ is a piecewise smooth extension of $\tau^H$ to $G$. We define $\tau'\colon X_G\to M_G$ by
\[
\tau'|_{Y\times [-1/2,1/2]} = \nu
\]
and
\[
\tau'(y,t) = \left\{ \begin{array}{ll}
(F\circ \tau)(y,2t-1), & \hbox{if}\ (y,t)\in Y \times [1/2,1], \\
(F\circ \tau)(y,2t+1), & \hbox{if}\ (y,t)\in Y \times [-1,-1/2].
\end{array}\right.
\]
The map $\tau'$ has area
\begin{eqnarray*}
\Area(\tau') &=& \Area(F \circ \tau) + \Area(\nu) \\
&\le& \mu\cdot\Area(\tau) + \Area(\nu).
\end{eqnarray*}
In particular, if $\Area(\tau)>2\Area(\nu)/(1-\mu)$, we get that
$$\Area(\tau') \le \frac{1+\mu}{2}\Area(\tau).$$
Thus, by iterating the application of the map $F$ if necessary, we find an extension $\bar\tau \colon X_G \to M_G$ of $\tau^H$ to $G$ for which 
$$\Area(\bar\tau) < 2 \Area(\nu)/(1-\mu)\stackrel{\tiny\hbox{def}}=C,$$
as we needed to prove.
\end{proof}

\subsection{Energy minimizers}
Continuing with the same notation, suppose that we have an admissible $2$-dimensional complex $X_G$ which is moreover endowed with an arbitrary piecewise smooth Riemannian metric $\rho$ in the sense of Eells and Fuglede \cite{Eells-Fuglede-book}. We denote by $\mathcal{F}(X_G;\tau^H)$ the class of continuous Sobolev maps 
$$\tau \colon X_G \to M_G$$ 
in $W^{1,2}(X_G,M_G;\rho)$ which extend $\tau^H$ to $G$.

If $\tau\in\CF(X_G;\tau^H)$ is such an extension and if $x\in X_G$ is a manifold point in which the weak differential $D\tau_x$ of $\tau$ exists, 
let 
\[
\norm{D\tau_x}^2_\rho = \frac{1}{2} \left( |D\tau_xe_1|^2 + |D\tau_xe_2|^2\right)
\]
be the Hilbert--Schmidt norm of the differential $D\tau_x$, where $\{e_1,e_2\}$ is a $\rho$-orthonormal basis of $T_x X$ and the norms on the right are computed using the hyperbolic metric of $M_G$. Note that $\norm{D\tau_x}^2_\rho\ge\Jac_2\tau\vert_x$ with equality if and only if $\tau$ is conformal at $x$.

The {\em 2-energy} or {\em Dirichlet energy} of $\tau\in \mathcal{F}(X_G;\tau^H)$ with respect to $\rho$ is then defined as
\begin{equation}
\label{eq:tau_g}
E(\tau;\rho) = \int_{X_G} \norm{D\tau_x}^2_\rho \vol_\rho.
\end{equation}
If the metric $\rho$ is understood from the context we will write simply $E(\tau)$.

\begin{lem}\label{lem-dirichlet}
With the same notation as above, let $\rho$ be a Riemannian metric on $X_G$. There exists a (unique) minimizer $\tau$ in $\mathcal{F}(X_G;\tau^H)$ for the $2$-energy with respect to $\rho$, that is,
\[
E(\tau;\rho) = \inf_{\tau'\in \mathcal{F}(X_G;\tau^H)} E(\tau';\rho).
\]
\end{lem}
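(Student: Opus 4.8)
The plan is to set this up as a standard application of the direct method in the calculus of variations, adapted to the Sobolev-map framework of Eells and Fuglede \cite{Eells-Fuglede-book} for maps from an admissible Riemannian polyhedron into a Riemannian manifold. First I would observe that the target $M_G=\BH^n/G$ is a complete Riemannian manifold of nonpositive (in fact constant negative) curvature, and $G=\pi_1(M_G)$ is free, hence $M_G$ is a $K(G,1)$ with contractible universal cover $\BH^n$; this is exactly the setting in which Eells--Fuglede establish existence and uniqueness of energy minimizers in a given homotopy class with prescribed boundary data. The class $\CF(X_G;\tau^H)$ is nonempty since we already constructed an extension by hand (and may assume it lies in $W^{1,2}$ after a small homotopy), and it is closed under the relevant operations: it consists of continuous $W^{1,2}$ maps agreeing with $\tau^H$ on the collar copy $Y\times\{0\}\subset X_G$ and lying in the homotopy class determined by $\tau_*=G$.

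The key steps, in order, would be: (1) fix $\tau_0\in\CF(X_G;\tau^H)$ and a minimizing sequence $\tau_k$ with $E(\tau_k;\rho)\to\inf_{\CF}E$; (2) use the energy bound together with the fact that each $\tau_k$ is homotopic to $\tau_0$ in the negatively curved target to get a uniform $C^0$ (or at least uniform $L^2$, after composing with the universal cover and using that the $\tau_k$ lift to maps into $\BH^n$ at bounded distance) bound, which upgrades the energy bound to a uniform $W^{1,2}$ bound; (3) extract a subsequence converging weakly in $W^{1,2}$ and, by Rellich on the compact polyhedron $X_G$, strongly in $L^2$ and almost everywhere, to some $\tau\in W^{1,2}(X_G,M_G;\rho)$; (4) use lower semicontinuity of the Dirichlet energy under weak $W^{1,2}$ convergence to conclude $E(\tau;\rho)\le\inf_{\CF}E$; (5) check that the limit $\tau$ still lies in $\CF(X_G;\tau^H)$ — i.e.\;it is continuous (Morrey-type regularity for energy minimizers into nonpositively curved spaces, or the Hölder estimates of Eells--Fuglede/Gromov--Schoen), it still restricts to $\tau^H$ on $Y\times\{0\}$ (trace continuity under $W^{1,2}$ convergence), and it is still in the right homotopy class (two continuous maps that are $W^{1,2}$-close, hence $C^0$-close on the regular part, into an aspherical target are homotopic rel the collar). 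For uniqueness, invoke convexity: since $M_G$ has nonpositive curvature, the energy functional is convex along geodesic homotopies, and a strict second-variation / the convexity of $t\mapsto d^2(\gamma_1(t),\gamma_2(t))$ argument shows that two minimizers in the same class with the same boundary values must coincide. This is precisely the Eells--Fuglede existence-and-uniqueness theorem, so the proof is mostly a matter of verifying that our $X_G$, $\rho$, $M_G$ and boundary datum $\tau^H$ satisfy its hypotheses.

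The main obstacle, and the place where I would spend the most care, is step (2)/(5): ensuring that the minimizing sequence does not "escape to infinity" in the noncompact target $M_G$ and that the weak limit remains both continuous and in the prescribed homotopy class rel $Y$. The escape issue is handled by lifting: each $\tau_k$ lifts to $\tilde\tau_k:\tilde X_G\to\BH^n$ equivariant under $\pi_1(X_G)\cong G$, and because the homotopy class is fixed and $Y$ carries the non-elementary subgroup $H$ whose translates of the base point are controlled, an energy bound forces a uniform bound on $d_{\BH^n}(\tilde\tau_k(\tilde *),\tilde *)$ over a fundamental domain; combined with the energy bound this gives the uniform $W^{1,2}$ bound. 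Preservation of the homotopy class then follows from $C^0$-closeness of the regular parts together with asphericity of $M_G$, and continuity of the limit from the interior regularity theory for harmonic maps (energy minimizers) into $\mathrm{CAT}(0)$ spaces. I would simply cite \cite{Eells-Fuglede-book} for the regularity and the existence-uniqueness package, noting that all structural hypotheses (admissibility of $X_G$, piecewise smooth metric $\rho$, nonpositively curved complete target, continuous $W^{1,2}$ boundary data on a collared subcomplex) are in force here.
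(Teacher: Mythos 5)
Your proposal takes essentially the same route as the paper: both treat this as a direct application of the Eells--Fuglede existence-and-uniqueness machinery for Dirichlet problems on admissible Riemannian polyhedra, with the class of competitors pinned down by the restriction $\tau|_{Y}=\tau^H$ and the condition $\tau_*(\pi_1(X_G))=G$, and both note that the noncompactness of the target is harmless because the maps are anchored along $Y$ (the paper cites Theorems 11.2 and 11.3 of \cite{Eells-Fuglede-book} and explicitly says that anchoring replaces prescribing data on the $1$-skeleton). One caveat in your step (5): the parenthetical claim that $W^{1,2}$-closeness of continuous maps implies $C^0$-closeness ``on the regular part'' is false in dimension $2$, since $W^{1,2}$ does not embed into $C^0$; the correct mechanism for preserving the homotopy class under passage to the limit is the one you already use for the escape-to-infinity issue, namely lifting to equivariant maps $\tilde X_G\to\BH^n$ and noting that $G$-equivariance is preserved under a.e.\;convergence, or else, as Eells--Fuglede actually do, running a local-replacement minimizing sequence so as to have uniform interior H\"older control along the sequence, not just for the limit.
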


In \cite{Eells-Fuglede-book} Eells and Fuglede present solutions to Dirichlet problems for maps between Riemannian polyhedra in different contexts. We just mention the necessary modifications to obtain Lemma \ref{lem-dirichlet}.

In the proof of Theorem 11.3 in \cite{Eells-Fuglede-book} we may replace the space $C_\psi(X_G,M_\Gamma)$ of continuous maps extending a given map $\psi \colon X_G^{(1)}\to M_\Gamma$, defined on the $1$-skeleton $X_G^{(1)}$, by our space $\mathcal{F}(X_G,M_\Gamma)$. We obtain a H\"older continuous energy minimizer $\tau\in \mathcal{F}(X_G;M_\Gamma)$, which extends $\tau^H$. Furthermore, by the argument used in the proof of Theorem 11.2 in \cite{Eells-Fuglede-book}, we have $\tau_*(\pi_1(X_G,*)) = G$. Note that, although Theorems 11.2 and 11.3 in \cite{Eells-Fuglede-book} are formally stated \emph{a priori} only for compact targets, the existence of minimizers holds also in our setting of non-compact hyperbolic manifold targets because we are anchoring our maps with $\tau^H$; see \cite[p.211]{Eells-Fuglede-book}. This concludes the discussion of Lemma \ref{lem-dirichlet}.\qed

\medskip

We prove that the minimizers provided by Lemma \ref{lem-dirichlet} satisfy the convexity property in Proposition \ref{prop:good extension} -- the point is that projections to convex sets reduce energy. 

\begin{lem}
\label{lemma:facts_on_minimizers}
With the same notation as in Lemma \ref{lem-dirichlet}, let $\tau\colon X_G\to M_G$ be the minimizer in $\mathcal{F}(X_G;\tau^H)$ for the $2$-energy with respect to $\rho$. If $B\subset M_G$ a convex submanifold of co-dimension $0$ and if $K\subset X_G\setminus Y$ is a compact subcomplex and $\tau(\partial K) \subset B$, and if $\tau\vert_K$ is homotopic relative to $\D K$ to a map with values in $K$, then $\tau(K)\subset B$.
\end{lem}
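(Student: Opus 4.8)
The plan is to exploit that the nearest-point projection onto a convex set is distance-nonincreasing, hence energy-nonincreasing, and to use this to modify the minimizer on $K$ without raising its energy—which, by uniqueness of the minimizer, would force $\tau$ to have already taken values in $B$ on $K$. First I would recall that since $B\subset M_G$ is a convex codimension-0 submanifold of a hyperbolic manifold, the nearest-point retraction $\pi\colon M_G\to B$ is well defined at least on a neighborhood of $B$ (on all of the convex hull region), is 1-Lipschitz, and restricts to the identity on $B$; more precisely, for the relevant target one works in a convex subset of $\BH^n$ upstairs, where this is the standard fact that projection onto a convex set does not increase hyperbolic distance. The hypothesis that $\tau\vert_K$ is homotopic rel $\D K$ to a map into $B$ (I read the ``with values in $K$'' in the statement as a typo for ``with values in $B$'', consistent with Proposition \ref{prop:good extension}) is what guarantees that replacing $\tau\vert_K$ by such a map yields a competitor still lying in $\mathcal{F}(X_G;\tau^H)$: the new map agrees with $\tau$ on $\D K$ (and in particular away from $K$, since $K\subset X_G\setminus Y$ is disjoint from the anchoring graph $Y$), is continuous, Sobolev, and induces the same map on $\pi_1$, so it still extends $\tau^H$ to $G$.

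The key steps, in order, are: (1) Let $\sigma\colon X_G\to M_G$ be the map that agrees with $\tau$ on $X_G\setminus \mathrm{int}(K)$ and, on $K$, equals $\pi\circ\tau\vert_K$; note $\tau(\D K)\subset B$ ensures $\pi\circ\tau = \tau$ on $\D K$, so $\sigma$ is continuous, and $\sigma\in W^{1,2}$ since $\pi$ is Lipschitz. (2) Check $\sigma\in\mathcal{F}(X_G;\tau^H)$: away from $K$ nothing changes, and the homotopy hypothesis shows $\sigma_*=\tau_*$ on $\pi_1$, so $\sigma_*(\pi_1(X_G,*))=G$ and $\sigma$ extends $\tau^H$. (3) Compare energies: $\norm{D\sigma_x}_\rho\le\norm{D\tau_x}_\rho$ pointwise a.e.\ on $K$ because $D(\pi\circ\tau)_x = D\pi_{\tau(x)}\circ D\tau_x$ and $D\pi$ has operator norm $\le 1$; off $K$ the differentials coincide. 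Hence $E(\sigma;\rho)\le E(\tau;\rho)$. (4) By the uniqueness clause of Lemma \ref{lem-dirichlet}, $E(\sigma;\rho)=E(\tau;\rho)$ forces $\sigma=\tau$; therefore $\tau(K)=\sigma(K)=\pi(\tau(K))\subset B$.

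The main obstacle I anticipate is the careful handling of the projection $\pi$ when $M_G$ is a noncompact hyperbolic manifold rather than $\BH^n$ itself: one must ensure $\pi\circ\tau$ is globally well defined and Lipschitz, which is where convexity of $B$ is essential—$B$ being convex means that a geodesic in $M_G$ from a point of $\tau(K)$ to $B$ realizing the distance is unique and varies Lipschitz-continuously, so $\pi$ is defined and 1-Lipschitz on an open set containing the image of a neighborhood of $K$ (one may first pass to the universal cover, project onto the convex lift of $B$, and descend). A secondary technical point is justifying that $\sigma$ genuinely lies in the Sobolev space $W^{1,2}(X_G,M_G;\rho)$ and represents a continuous map with the claimed homotopy type across $\D K$; this is routine given that $K$ is a compact subcomplex with (by the ambient setup) collared boundary and that $\pi$ is Lipschitz, but it is the place where the admissibility and subcomplex hypotheses are used.
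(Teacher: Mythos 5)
Your plan is essentially the paper's: replace $\tau$ on $K$ by the nearest-point projection onto $B$, observe this does not increase energy, and invoke minimality. The one place where you are imprecise is exactly where the paper does something specific. You assert that convexity of $B$ makes the nearest-point projection well-defined on $M_G$ (or alternatively ``pass to the universal cover, project onto the convex lift of $B$, and descend''), and you justify this by claiming the minimizing geodesic from a point to $B$ is unique. That claim is false in general: $M_G$ is not simply connected, and a point of $M_G$ can admit several minimizing geodesics to $B$ lying in different homotopy classes (think of $B$ a small convex ball far from a short closed geodesic). The paper's fix is to pass not to the universal cover but to the \emph{intermediate} cover $\hat M\to M_G$ with $\pi_1(\hat M)=\pi_1(B)$: there $B$ lifts homeomorphically to a convex set whose preimage in $\BH^n$ is a single $\pi_1(B)$-invariant convex set, so the nearest-point projection $\pi\colon\hat M\to B$ is globally defined, $1$-Lipschitz, and strictly contracting off $B$. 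Your homotopy hypothesis then plays a second role beyond keeping the competitor in $\mathcal F(X_G;\tau^H)$: it is what guarantees that $\tau|_K$ lifts to $\hat\tau\colon K\to\hat M$, so that $\pi\circ\hat\tau$ makes sense. After descending $\pi\circ\hat\tau$ back to $M_G$ one gets your $\sigma$. A minor stylistic difference: you close the argument via the uniqueness clause of Lemma \ref{lem-dirichlet} ($\sigma=\tau$), whereas the paper closes it by using that $\pi$ is a \emph{strict} contraction off $B$, so equality of energies already forces $\hat\tau(K)\subset B$; both are fine. Also, you are right that ``with values in $K$'' in the statement should read ``with values in $B$''.
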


\begin{proof}
Let $\hat M\to M_G$ be the cover of $M_G$ with fundamental group $\pi_1(B)$ and note that $B$ lifts homeomorphically to $\hat M$. Note also that the map $\tau\vert_K\colon K\to M_G$ lifts to a map $\hat\tau\colon K\to\hat M$. Consider the convex-projection $\pi\colon \hat M\to B$, where $\pi$ is the map determined by $d_{\hat M}(x,\pi(x)) = d_{\hat M}(x,B)$ for each $x\in\hat M$; recall that $\pi$ strictly contracts distances in $\hat M\setminus B$. This last property implies that 
\begin{equation}\label{I am sick of this}
E(\pi\circ \hat\tau|_{K}) \le E(\hat\tau|_K)
\end{equation}
with equality if and only if $\hat{\tau}(K) \subset B$. Now, the map $\tau$ is homotopic to a map $\tau'$ with $\tau'\vert_{X_G\setminus K}=\tau\vert_{X_G\setminus K}$ and such that $\tau'\vert_K$ is equal to the composition of $\pi\circ\hat\tau\vert_K$ with the projection $\hat M\to M_G$. Since $\tau$ is a minimizer of the energy we deduce that $E(\tau')\ge E(\tau)$. This implies that in \eqref{I am sick of this} we must have equality, which prove that $\hat{\tau}(K) \subset B$ and hence that $\tau(K)\subset B$.
This concludes the proof of the lemma.
\end{proof}

\subsection{Proof of Proposition \ref{prop:good extension}}
We are now ready to prove Proposition \ref{prop:good extension}. Let $\tau:X_G\to M_G$ be the map provided by Lemma \ref{lemma:BCG} and note that, up to a small perturbation, we might assume that $\tau$ is a piecewise smooth immersion. Let $\rho$ be the metric on $X_G$ obtained as the pull-back via $\tau$ of the metric of $M_G$ and let $\tau'$ be the energy minimizer in $\mathcal{F}(X_G;\tau^H)$ with respect to $\rho$ as provided by Lemma \ref{lem-dirichlet}. We get from Lemma \ref{lemma:facts_on_minimizers} that $\tau'$ satisfies the second property claimed in the statement of Proposition \ref{prop:good extension}. In other words, it suffices to prove that it satisfies the area bound. We will show that
\begin{equation}
\label{eq:Area_E_C}
\Area(\tau') \le E(\tau';\rho) \le \Area(\tau).
\end{equation}
The first inequality in \eqref{eq:Area_E_C} follows from the pointwise relation $\Jac_2(\tau') \le \norm{D\tau'}^2_\rho$ and the co-area formula:
\begin{eqnarray*}
\Area(\tau') &=& \int_{\tau'(X_G)} \vert (\tau')^{-1}(y)\vert \mathrm{d}\mathcal{H}^2(y) \\
&=& \int_{X_G}\Jac_2(\tau') \vol_\rho \le \int_{X_G} \norm{D\tau'}^2_\rho \vol_\rho = E(\tau';\rho).
\end{eqnarray*}
The second inequality follows from the observations that $\tau$ is a competitor for the energy in $\mathcal{F}(X_G;\tau^H)$ and that $\tau$ is conformal with respect to $\rho$, which means that the Jacobian and the pointwise energy agree.  Thus, by the co-area formula,
\begin{eqnarray*}
E(\tau';\rho) &\le& E(\tau;\rho) = \int_{X_G} \norm{D\tau}^2_\rho \vol_\rho \\
&=& \int_{X_G}\Jac_2(\tau) \vol_\rho
= \int_{\tau(X_G)} \vert\tau^{-1}(y)\vert \mathrm{d}\mathcal{H}^2(y) =\Area(\tau).
\end{eqnarray*}
This concludes the proof.\qed

\subsection{A lemma about area}

We finish this section by showing that, under the assumption that $H$ fills $G$, balls in the thick part of $M_G$ which are centered at $\tau(X_G)$ contain a definite amount of the image $\tau(X_G)$. 

\begin{lem}\label{lemma:thick}
With the notation as in Proposition \ref{prop:good extension}, let $\delta>0$ and $x\in X_G$ be such that $\inj_{M_G}(\tau(x))\ge \delta$ and $d(\tau(x),\tau(Y))>\delta$. If $H$ fills $G$, then $\mathcal{H}^2(\tau(X_G) \cap B(\tau(x),\delta,M_G)) \ge \delta^2/4$.
\end{lem}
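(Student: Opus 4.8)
The plan is to run the standard ``monotonicity of area for minimal-type surfaces'' argument, but since $\tau'$ is only an energy minimizer rather than an honest minimal surface, the monotonicity will be replaced by a competitor argument that uses the convexity property from Proposition \ref{prop:good extension}. Fix $x$ as in the statement and write $B_r = B(\tau(x),r,M_G)$ for $0<r\le\delta$. Since $\inj_{M_G}(\tau(x))\ge\delta$, each ball $B_r$ with $r\le\delta$ is a convex submanifold of $M_G$, and since $d(\tau(x),\tau(Y))>\delta$ the set $Y$ plays no role near $\tau(x)$. Set $K_r = \tau'^{-1}(\overline{B_r})\subset X_G$; for a.e.\ $r$ this is a compact subcomplex (after the usual transversality/Sard argument) contained in $X_G\setminus Y$, with $\tau'(\D K_r)\subset \D B_r$. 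Let $a(r) = \mathcal H^2(\tau'(X_G)\cap B_r)$ be the (essentially $\mathcal H^2$-area counting function of $\tau'$ restricted to $K_r$, so that $a$ is nondecreasing, $a(\delta)\le\Area(\tau')<C$, and we want $a(\delta)\ge\delta^2/4$.

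\textbf{The key dichotomy.} For each regular $r$, consider the set $K_r$. If the restriction $\tau'|_{K_r}$ were homotopic rel $\D K_r$ to a map into $B_r$, then by Proposition \ref{prop:good extension} (convexity) we would get $\tau'(K_r)\subset B_r$; but then the loop $\tau'$ of any nontrivial class supported near $x$ would be confined, and one checks this forces $[\pi_1(Z)\to\pi_1(X_G)]$-type triviality in a way incompatible with $H$ filling $G$ --- more precisely, if $\tau'(K_r)\subset B_r$ for some $r<\delta$ then one can cap off and, using Lemma \ref{BS-1} with $Z=K_r$, conclude that $\pi_1(M_G)=G$ splits as a free product relative to $H=[\pi_1(Y)\to\pi_1(X_G)]$, contradicting the hypothesis that $H$ fills $G$. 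Hence for every regular $r<\delta$ the map $\tau'|_{K_r}$ is \emph{not} homotopic rel $\D K_r$ into $B_r$. This is the crucial use of the filling hypothesis, and I expect verifying this topological step cleanly --- in particular checking that $K_r$ really is a subcomplex with collared boundary to which Lemma \ref{BS-1} applies, and that capping $B_r$ does not change the relevant $\pi_1$-image --- to be the main obstacle.

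\textbf{The differential inequality.} Once we know $\tau'|_{K_r}$ is not nullhomotopic into $B_r$ for a.e.\ $r<\delta$, we run the isoperimetric/monotonicity argument. For a regular value $r$, slicing by the distance function $y\mapsto d_{M_G}(\tau(x),y)$ and applying the coarea formula on $X_G$ (with respect to $\rho$, using $\Jac_2\tau'\le\|D\tau'\|_\rho^2$ as in \eqref{eq:Area_E_C}) shows that the length $\ell(r)$ of $\tau'(\D K_r)$ satisfies $a'(r)\ge$ (up to the conformality defect) something comparable to $\ell(r)$, while the non-triviality of $\tau'|_{K_r}$ rel $\D K_r$ means $\tau'(\D K_r)$ bounds ``inefficiently'': any filling disk of $\tau'(\D K_r)$ inside $B_r$ can be used to replace $\tau'|_{K_r}$, and minimality of energy forces $a(r)$ to be at least the area needed to fill $\D K_r$ keeping it essential, which in the convex ball $B_r$ of a hyperbolic manifold is bounded below linearly: $\ell(r)\ge c\, a(r)^{1/2}$ fails, rather one gets the clean linear-isoperimetric-type bound $2a(r)\le r\,\ell(r)$ at regular $r$ (the cone inequality in the convex ball), hence $a'(r)\ge \ell(r)\ge 2a(r)/r$. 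Actually the cleanest route avoids isoperimetry entirely: since $\tau'|_{K_r}$ is not homotopic rel boundary into $B_r$, the ball $B_r$ cannot ``absorb'' $K_r$, so by the competitor comparison (replace $\tau'$ on $K_r$ by the cone from $\tau(x)$ over $\tau'(\D K_r)$, which lies in $B_r$ and has area $\le \tfrac r2 \ell(r)$) minimality gives $a(r)\le \tfrac r2\ell(r)\le \tfrac r2 a'(r)$ for a.e.\ $r$, i.e.\ $(\log(a(r)/r^2))'\ge 0$. Integrating $\bigl(a(r)r^{-2}\bigr)'\ge 0$ from a small $r_0$ to $\delta$ and using that near the regular point $\tau(x)$ the surface $\tau'(X_G)$ has density at least $1/4$ at the bottom scale (so $\liminf_{r\to 0} a(r)/r^2\ge 1/4$), we conclude $a(\delta)\ge (\delta^2)\cdot\tfrac14 = \delta^2/4$, which is exactly the claimed bound.

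\textbf{Remark on the constant.} The constant $1/4$ comes from the lower density bound $\pi/4$ (or just $\ge 1/4$) for a point lying on the image of a nonconstant energy-minimizing map at a scale below the injectivity radius, combined with the monotonicity $a(r)/r^2$ nondecreasing; no sharp isoperimetric constant is needed, so I would not try to optimize it.
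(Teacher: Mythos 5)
The paper's proof of Lemma \ref{lemma:thick} does not run a monotonicity/differential-inequality argument at all; it is a one-shot slicing argument. It takes the connected component $\Omega_x^t$ of $\tau^{-1}(B(\tau(x),t,M_G))$ containing $x$, uses coarea to find a single radius $t\in[\delta/2,\delta]$ at which $\CH^1(\tau(S_x^t))\le\frac{2}{\delta}\CH^2(\tau(\Omega_x^\delta))$, applies Lemma \ref{BS-1} (using that $H$ fills $G$) to enlarge $\Omega_x^t$ to a subcomplex $\hat\Omega_t$ with a \emph{connected} boundary mapped into a ball $B$ of diameter $\frac{2}{\delta}\CH^2(\tau(\Omega_x^\delta))$, and then invokes the convexity property to deduce $\tau(\Omega_x^t)\subset B$. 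The punchline is a diameter comparison: $\diam B\ge \diam\tau(\Omega_x^t)\ge t\ge\delta/2$, which forces $\CH^2(\tau(\Omega_x^\delta))\ge\delta^2/4$. So the filling hypothesis enters solely to guarantee connectedness of a suitable boundary graph; there is no integration of a differential inequality.

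Your proposal has concrete gaps beyond being a different strategy. First, the ``key dichotomy'' is vacuous: by your own definition $K_r=\tau'^{-1}(\overline{B_r})$, so $\tau'(K_r)\subset\overline{B_r}$ automatically, and since $B_r$ is a convex ball below the injectivity radius, $\tau'|_{K_r}$ is certainly homotopic rel $\D K_r$ to a map into $B_r$; there is no alternative branch. Second, the claimed contradiction misapplies Lemma \ref{BS-1}: that lemma \emph{assumes} $G$ does not split relative to $H$ and \emph{outputs} a subcomplex $\hat Z$ with connected boundary and trivial $\pi_1$-image. It does not allow you to conclude that $G$ splits, so your ``contradiction with $H$ filling $G$'' does not follow. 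Third, the cone competitor step has an energy/area mismatch: minimality of $\tau'$ bounds the \emph{energy} of a competitor from below, $E(\text{cone on }K_r)\ge E(\tau'|_{K_r})$, but the cone's energy with respect to the fixed metric $\rho$ on $X_G$ is not controlled by its image area $\le\tfrac{r}{2}\ell(r)$; the inequality $a(r)\le\tfrac{r}{2}a'(r)$ therefore does not follow from what you wrote. Fourth, the terminal step requires a density lower bound $\liminf_{r\to 0}a(r)/r^2\ge 1/4$ at the point $\tau(x)$, which is not established and is delicate for energy minimizers (as opposed to genuinely minimal surfaces -- note the paper deliberately avoids the Plateau problem). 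Finally, working with the full preimage $K_r$ rather than the connected component containing $x$ loses the locality that the paper's argument relies on.
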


In the statement, $B(\tau(x),\delta,M_G)$ is the open metric ball about $\tau(x)$ of radius $\delta$ in $M_G$. Note that the assumption that $H$ fills $G$ implies that Lemma \ref{BS-1} applies to $Y$ and $X=X_G$.

\begin{proof}
For $t\in [\delta/2,\delta]$, let $\Omega^t_x \subset \tau^{-1}(B(\tau(x),\delta,M_G))$ be the connected component containing $x$ and let $S_x^t = \tau^{-1}(\partial\Omega_x^t)$ be the boundary of $\Omega^t_x$. Note that, for almost every $t\in [\delta/2,\delta]$, $S_x^t$ is a collared locally separating graph in $X_G$. By the co-area formula we have 
\begin{eqnarray*}
\mathcal{H}^2(\tau(\Omega_x^\delta \setminus \Omega_x^{\delta/2})) 
&\ge& \int_{\delta/2}^\delta \mathcal{H}^1(\tau(\Omega_x^\delta \setminus \Omega_x^{\delta/2}) \cap \partial B_t) \mathrm{d}t \\
&\ge& \int_{\delta/2}^\delta \mathcal{H}^1(\tau(S_x^t)) \mathrm{d}t.
\end{eqnarray*}
Thus there exists a set $E\subset [\delta/2,\delta]$ of positive measure for which
\[
\mathcal{H}^1(\tau(S_x^t)) \le \frac{2}{\delta} \mathcal{H}^2(\tau(\Omega_x^\delta))
\]
for each $t\in E$. We may also assume that $S_x^t$ is a collared locally separating graph for every $t\in E$. We fix a radius $t\in E$. 

Since $\tau(\Omega_x^t)\subset B(x,t,M_G) \subset B(x,\delta,M_G)$ and $\inj_{M_G}(\tau(x))\ge \delta$, the image of $(\tau|_{\Omega_x^t})_* \colon \pi_1(\Omega_x^t)\to \pi_1(M_G)$ is trivial. Thus, by Lemma \ref{BS-1}, $S_x^t$ contains a connected subgraph $\hat S_t$ which bounds a connected subcomplex $\hat \Omega_t\subset X_G$ with connected boundary satisfying $\Omega_x^t\subset \hat \Omega_t$, $Y\cap \hat \Omega_t=\emptyset$, and $[\pi_1(\hat \Omega_t)\to\pi_1(M_G)]=1$.

Since 
\[
\diam(\tau(S_x^t)) \le \mathcal{H}^1(\tau(S_x^t)) \le \frac{2}{\delta}\CH^2(\tau(\Omega_x^\delta)),
\]
there exists a ball $B\subset M_G$ of diameter $2\CH^2(\Omega_x^\delta)/\delta$ containing $\tau(S_x^t)$. Since $\tau(\partial \hat \Omega_t)=\tau(\hat S_t) \subset \tau(S_x^t) \subset B$, we have
\[
\tau(\Omega_x^t) \subset \tau(\hat \Omega_t) \subset B
\]
by Lemma \ref{lemma:facts_on_minimizers}. Thus
\[
\frac{2}{\delta} \CH^2(\tau(\Omega_x^\delta)) = \diam B \ge \diam \tau(\Omega_x^t) \ge d(\tau(x),\tau(S_x^t)) = t \ge \frac{\delta}{2}
\]
and the claim follows. 
\end{proof}


\section{Proof of Proposition \ref{diam-bound}}\label{sec:diam}

In this section we prove Proposition \ref{diam-bound}, following an argument modeled on the proof of Thurston's bounded diameter lemma \cite{Bonahon}. Life would be much easier if all manifolds in question had injectivity radius uniformly bounded from below and we suggest the reader to consider this case at first; however, to treat the general case we need to recall a few facts about the thin-thick decomposition of hyperbolic manifolds.

The Margulis lemma \cite{Kapovich-book} asserts that there is some constant $\mu>0$, depending only on the dimension $n$, with the property that the fundamental group $\pi_1(U)$ of a connected component $U$ of the $\mu$-{\em thin part} 
$$M^{<\mu}= \{x\in M \vert \inj_{M}(x)<\mu\}$$
of any hyperbolic $n$-manifold $M$ is virtually abelian. Moreover, the closure of $U$ is compact if and only if $\pi_1(U)$ is hyperbolic. We denote by $M^{cusp<\mu}$ the union of the unbounded components of $M^{<\mu}$ and refer to it as the {\em cuspidal part} of $M$. The complements of the thin and cuspidal parts
$$M^{\ge\mu}=M\setminus M^{<\mu}\ \ \hbox{and} \ \ M^{cusp\ge\mu}=M\setminus M^{cusp<\mu}$$
are respectively the {\em thick} and {\em non-cuspidal parts} of $M$. Note that the cuspidal part if empty if $\pi_1(M)$ is purely hyperbolic. Up to diminishing $\mu$ once and forever, we can assume that any two components of the $\mu$-thin part are at least distance $1$ from each other. 

Following Bonahon \cite{Bonahon}, we define the \emph{length relative to $M^{<\mu}$ of a path} $\gamma\colon [0,1]\to M$ as the length of the part of $\gamma$ contained in the {\em thick part} $M^{\ge\mu}$. Naturally, the \emph{distance $d_{\rel M^{<\mu}}(x,y)$ relative to $M^{<\mu}$ between points $x$ and $y$ in $M$} is the minimal relative length of a path joining both points. Note that if some point of a component of $M^{<\mu}$ is at distance less than $L$ from $x$ relative to the thin part, then the whole component is at distance less than $L$ relative to the thin part. In particular, in the presence of cusps, the set of points which are distance at most $L$ relative to the thin part can be unbounded. On the other hand, if there are no cusps, then the sets $\{d_{\rel M^{<\mu}}(x,\cdot)\le L\}$ are compact. 
\medskip

We return now to the concrete situation we are interested in. Notation is always as in ($\star$), meaning that we have a fixed torsion free subgroup $\Gamma\subset\Isom_+\BH^n$ with $\dim_\CH(\Lambda_\Gamma)<1$, a fixed finitely generated subgroup $H\subset\Gamma$, a fixed, pointed, finite connected graph $(Y,*)$ and a fixed $\pi_1$-injective piecewise smooth embedding $\tau^H \colon (Y,*)\to(M_\Gamma,*)$ with $\tau^H_*(\pi_1(Y,*))=H$.

We will be interested in subgroups $G$ of the fixed ambient group $\Gamma$, which is locally free by Kulkarni's theorem. In particular, virtually abelian subgroups of any such $G$ are cyclic, implying that components $U$ of the thin part of $M_G=\BH^n/G$ are homeomorphic to $\BS^1\times\BR^{n-1}$. 
 
Up to reducing $\mu$ once more if necessary, we can assume that the image of the marking map $\tau^H \colon Y\to M_\Gamma$ is contained in the $\mu$-thick part of $M_\Gamma$. Observe that this implies that $\tau^H(Y) \subset M_G^{\ge\mu}$ for every subgroup $G\subset \Gamma$ containing $H$.

\begin{lem}\label{lem-bounded-diam1}
With notation as in ($\star$) there is a constant $C>0$ with the following property: If $H\subset G\subset\Gamma$ is a finitely generated subgroup of $\Gamma$ filled by $H$, and if $\tau \colon X_G\to M_G$ is the extension of $\tau^H$ provided by Proposition \ref{prop:good extension}, then we have
$$d_{\rel M_G^{<\mu}}(\tau(x),\tau(y))\le C$$
for any two points $x,y\in X_G$.
\end{lem}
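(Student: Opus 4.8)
The plan is to combine the global area bound $\Area(\tau) \le C$ from Proposition \ref{prop:good extension} with the local lower bound from Lemma \ref{lemma:thick}, exactly as in the classical proof of Thurston's bounded diameter lemma. First I would reduce to a statement about $\tau(X_G)$ inside the non-cuspidal part. Since $M_G$ has no cusps unless $\Gamma$ does, and in any case the thin parts are cyclic (because $G$ is locally free), the relevant sets $\{d_{\rel M_G^{<\mu}}(\tau(x),\cdot)\le L\}$ are manageable; the point of working with the relative distance is precisely that paths are allowed to pass through thin tubes "for free", so only the thick portion of $\tau(X_G)$ needs to be controlled.

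The core estimate is a packing argument. Fix $\delta=\min\{\mu/2,1/4\}$ (something small enough that $\inj_{M_G}(\tau(x))\ge\delta$ holds on the $\mu$-thick part and that balls of radius $\delta$ are embedded). Suppose for contradiction that there are points $x_0,\dots,x_N$ on $\tau(X_G)\cap M_G^{\ge\mu}$, all at distance $>\delta$ from $\tau(Y)$, that are pairwise $d_{\rel M_G^{<\mu}}$-distance more than $2\delta$ apart, with $N$ large. Then the balls $B(x_i,\delta,M_G)$ are pairwise disjoint — indeed if two such balls met, there would be a path of ordinary length $<2\delta$ between $x_i$ and $x_j$, and since this short path cannot enter a thin tube (tubes are distance $\ge 1$ apart and the $x_i$ are $\mu$-thick) its relative length equals its length, contradicting the separation assumption. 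By Lemma \ref{lemma:thick}, each such ball contributes at least $\delta^2/4$ to $\mathcal{H}^2(\tau(X_G))$, hence to $\Area(\tau)$. This forces $N\cdot\delta^2/4\le C$, so $N\le 4C/\delta^2$; i.e.\ the $\tau(x)$ with $x\in X_G$, $\tau(x)$ $\mu$-thick, and $d(\tau(x),\tau(Y))>\delta$ form a $2\delta$-separated set in the relative metric of bounded cardinality, hence a set of bounded relative diameter from $\tau(Y)$.

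It remains to upgrade this to a bound on $d_{\rel M_G^{<\mu}}(\tau(x),\tau(y))$ for arbitrary $x,y\in X_G$, not just thick ones lying far from $\tau(Y)$. Points of $\tau(X_G)$ lying within $\delta$ of $\tau(Y)$ are already within relative distance $\delta+\diam(\tau(Y))$ of the base point (note $\tau(Y)=\tau^H(Y)$ is a fixed compact set in the thick part, of bounded diameter). Points lying in a thin component $U$ of $M_G^{<\mu}$ are at relative distance $0$ from the boundary sphere $\partial U$, which is at relative distance $\le\delta$ from some thick point of $\tau(X_G)$ reached along a thick path — here one uses that $\tau(X_G)$ has dense interior and $\tau$ is piecewise smooth, so a generic short path off the thin part stays on $\tau(X_G)$. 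Combining the three cases and the triangle inequality for $d_{\rel M_G^{<\mu}}$ gives the uniform constant $C$. The main obstacle I anticipate is the bookkeeping in this last step — making precise, in the presence of thin tubes, that every point of $\tau(X_G)$ is within a bounded relative distance of the "good" thick points the packing argument controls, and checking that the $2\delta$-separated net can be chosen so that its $2\delta$-neighborhood (in the relative metric) actually covers all of $\tau(X_G)\cap M_G^{\ge\mu}$ away from $\tau(Y)$; everything else is a routine application of Lemma \ref{lemma:thick} and the area bound.
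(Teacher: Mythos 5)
Your strategy — area upper bound from Proposition \ref{prop:good extension}, local lower bound from Lemma \ref{lemma:thick}, and a packing argument — is exactly the paper's. The one place where your write-up has a real loose end is the jump from ``a $2\delta$-separated subset has bounded cardinality'' to ``bounded relative diameter'': bounded cardinality of a separated net alone does not give bounded diameter without a connectivity argument (a covering by finitely many balls can still be spread arbitrarily far apart if the set is disconnected). The paper sidesteps precisely this bookkeeping by running the packing argument along a fixed path $\eta\colon[0,1]\to X_G$ from $x$ to $y$ (after discarding the bounded-diameter piece near $\tau(Y)$): one chooses a maximal collection of parameters $t_0<\cdots<t_k$ with $\tau(\eta(t_i))$ thick and pairwise disjoint $\mu$-balls, so that maximality forces every thick point of $\tau(\eta[0,1])$ to lie within $2\mu$ of some $\tau(\eta(t_i))$, and hence $d_{\rel M_G^{<\mu}}(\tau(\eta(0)),\tau(\eta(1)))\le 2\mu(k+2)$, while Lemma \ref{lemma:thick} gives $\Area(\tau)\ge \mu^2(k+1)/4$. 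Because the separated points lie consecutively on a single path, the diameter bound is immediate; you would need to supply the equivalent connectivity step (e.g., that $\tau(X_G)$ is path-connected and chain the balls) to finish your version, but the underlying mechanism is the same.
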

\begin{proof}
Since $\tau\vert_Y=\tau^H$ and the latter lifts to the cover $M_H$ of $M_G$, we have
$$\diam_{M_G}(\tau(Y))\le\diam_{M_H}(\tau^H(Y)).$$
Thus the claim follows once we bound the relative distance between the images under $\tau$ of the endpoints $\eta(0)$ and $\eta(1)$ of paths $\eta\colon[0,1]\to X_G$ which stay at least at distance $\mu$ from $\tau(Y)$.

Choose a maximal collection of points $0=t_0< t_1 < \cdots < t_k\le 1$ in $[0,1]$ with $\tau(\eta(t_i))\in M_G^{\ge\mu}$ for all $i$ and with 
$$B(\tau(\eta(t_i)),\mu,M_G)\cap B(\tau(\eta(t_j)),\mu,M_G)=\emptyset\ \ \hbox{for}\ i\neq j.$$
Then each point $\tau(\eta[0,1])\cap M_G^{\ge\mu}$ is within distance $2\mu$ of one of the points $\tau(\eta(t_0)),\dots,\tau(\eta(t_k))$, and this implies that
\begin{equation}\label{eq-boundeddiam1}
d_{\rel M_G^{<\mu}}(\tau(\eta(0)),\tau(\eta(1)))\le 2\mu(k+2).
\end{equation}
On the other hand, we get from Lemma \ref{lemma:thick} that 
$$\CH^2(\tau(X_G)\cap B(\tau(\eta(t_i)),\mu,M_G))\ge\frac{\mu^2}4$$
for each $i$, meaning that
\begin{equation}\label{eq-boundeddiam2}
\Area(\tau)\ge\sum_{i=0}^k\CH^2(\tau(X_G)\cap B(\tau(\eta(t_i)),\mu,M_G))\ge\frac{\mu^2}4(k+1).
\end{equation}
Taking together \eqref{eq-boundeddiam1} and \eqref{eq-boundeddiam2} and recalling that, since $\tau$ is provided by Proposition \ref{prop:good extension}, $\Area(\tau)$ is bounded by a constant $C_0$ independent of $G$, we get that
$$d_{M_G}(\tau(\eta(0)),\tau(\eta(1)))\le 2\mu+\frac 8\mu C_0.$$
The claim follows.
\end{proof}

We are now ready to prove Proposition \ref{diam-bound}.

\begin{named}{Proposition \ref{diam-bound}}
Let $\Gamma\subset\Isom_+(\BH^n)$ be as in Theorem \ref{main-dim}, $H\subset\Gamma$ a finitely generated non-elementary subgroup and $*\in\BH^n$ a base point. Then there is a constant $d>0$ such that each finitely generated intermediate subgroup $H\subset G\subset\Gamma$ filled by $H$ is generated by elements $g$ satisfying $d_{\BH^n}(*,g *)\le d$.
\end{named}
\begin{proof}
Suppose for the time being that $\Gamma$ is purely hyperbolic. The case with parabolics will be discussed below.

Let $C>0$ be the constant provided by Lemma \ref{lem-bounded-diam1}, suppose that $G\subset\Gamma$ is a subgroup filled by $H$, let $\tau\colon X_G\to M_G$ be the extension of $\tau^H$ provided by Proposition \ref{prop:good extension}, and let $x\in X_G$ be an arbitrary point. By Lemma \ref{lem-bounded-diam1}, the point $\tau(x)$ belongs to the set $\{z\in M_G\vert d_{\rel M_G^{<\mu}}(z,\tau(*))\le C\}$ of points at relative distance at most $C$ from the point $\tau(*)=*$. This means in particular that there is a path $\eta\colon [0,1]\to M_G$ with $\eta(0)=*$, with $\eta(1)=\tau(x)$ and which has at most length $C$ relative to the thin part $M_G^{<\mu}$. 

Since any two components of the $\mu$-thin part of $M_G$ are at distance at least $1$ from each other, it follows that $\eta$ enters at most $C+1$ distinct components of $M_G^{<\mu}$. This means that the actual distance in $M_G$ between the endpoints $\eta(0)=\tau(*)=*$ and $\eta(1)=\tau(x)$ is bounded from above in terms of $C$ and the diameter of the components of the thin part of $M_G$ traversed by the path $\eta$. 

Each one of the components of the thin part traversed by $\eta$ projects under the cover $M_G\to M_\Gamma$ into a component of the thin part of $M_\Gamma$ contained in the compact set $\{z\in M_\Gamma\vert d_{\rel M_\Gamma^{<\mu}}(z,*)\le C\}$ -- compactness being a consequence of the assumption that $\Gamma$ is purely hyperbolic. Compactness of $\{z\in M_\Gamma\vert d_{\rel M_\Gamma^{<\mu}}(z,*)\le C\}$ implies that there is a positive lower bound for the length of closed geodesics contained therein. This implies that the lengths of the closed geodesics in $M_G$ contained in $\{z\in M_G\vert d_{\rel M_G^{<\mu}}(z,\tau(*))\le C\}$ are bounded from below by a positive constant independent of $G$. In turn, this implies that there is an upper bound independent of $G$ for the diameters of the components of the thin part $M_G^{<\mu}$ traversed by $\eta$. 

Taking all this together, we have proved that the distance between $\tau(x)$ and $*$ is bounded independently of $G$. We record this fact:

\begin{fact}\label{fact1}
Suppose that $\Gamma$ is purely hyperbolic. Then there is a constant $D>0$ such that for each intermediate group $H\subset G\subset\Gamma$ filled by $H$ one has
$$d_{M_G}(\tau(x),*)\le D$$
for all $x\in X_G$. Here $\tau:X_G\to M_G$ is the extension of $\tau^H$ provided by Proposition \ref{prop:good extension}. \qed
\end{fact}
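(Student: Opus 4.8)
The plan is to read Fact \ref{fact1} off Lemma \ref{lem-bounded-diam1}, upgrading the bound on the \emph{relative} distance it supplies to a bound on the actual distance by controlling the diameters of the finitely many Margulis tubes of $M_G$ that a short relative path can meet near the base point; this last step is where purely hyperbolicity enters.

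First I would apply Lemma \ref{lem-bounded-diam1} with $y=*$. Since $*\in Y$ and $\tau\vert_Y=\tau^H$ sends $*$ to $*$, we obtain for each $x\in X_G$ a path $\eta\colon[0,1]\to M_G$ from $*$ to $\tau(x)$ whose length relative to $M_G^{<\mu}$ is at most the constant $C$ of that lemma; in particular the portion of $\eta$ in the thick part $M_G^{\ge\mu}$ has total length $\le C$. Since distinct components of $M_G^{<\mu}$ lie at distance $\ge 1$ from one another, $\eta$ meets at most $C+1$ components $U_1,\dots,U_k$ of $M_G^{<\mu}$, and therefore
\[
d_{M_G}(*,\tau(x))\ \le\ \length(\eta)\ \le\ C+(C+1)\max_i\diam_{M_G}(U_i).
\]
It remains to bound $\diam_{M_G}(U_i)$ by a constant independent of $G$.

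Each $U_i$ contains a point of $\eta$, hence a point at relative distance $\le C$ from $*$; pushing forward by the locally isometric covering $M_G\to M_\Gamma$, the component $U_i$ maps into the sublevel set $K_\Gamma=\{z\in M_\Gamma:d_{\rel M_\Gamma^{<\mu}}(z,*)\le C\}$. As recalled in Section \ref{sec:diam}, $K_\Gamma$ is compact because $\Gamma$ is purely hyperbolic, so there is $\ell_0>0$ — depending only on the data fixed in ($\star$) — with $\length(\sigma)\ge\ell_0$ for every closed geodesic $\sigma$ of $M_\Gamma$ meeting $K_\Gamma$. Since $G$ is locally free, $U_i$ is a Margulis tube $\BS^1\times\BR^{n-1}$ about a single closed geodesic; its core projects to such a $\sigma$, so its length lies in $[\ell_0,2\mu)$. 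A standard estimate for the displacement of a loxodromic isometry at distance $r$ from its axis then bounds the radius of the $\mu$-thin tube about a geodesic of length in $[\ell_0,2\mu)$ by a constant $r_0=r_0(\ell_0,\mu,n)$, whence $\diam_{M_G}(U_i)\le 2r_0+\mu$. Substituting into the display gives $d_{M_G}(*,\tau(x))\le C+(C+1)(2r_0+\mu)=:D$, as desired.

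The step I expect to be the crux is the last one: turning a \emph{lower} bound on the length of the core geodesic of a component of the thin part into an \emph{upper} bound on that component's diameter. This is exactly the place where purely hyperbolicity cannot be dispensed with — otherwise $K_\Gamma$ need not be compact, there is no positive lower bound on core lengths, and the components of the thin part can be arbitrarily large tubes or cusp regions; this is precisely why the general case, with parabolics, must be handled separately afterward.
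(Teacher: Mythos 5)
Your proof is correct and follows essentially the same route as the paper: invoke Lemma~\ref{lem-bounded-diam1} to get a path from $*$ to $\tau(x)$ of relative length $\le C$, use the $\ge 1$ separation of thin components to cap the number of such components visited by $C+1$, project to $M_\Gamma$ and use compactness of $\{z:d_{\rel M_\Gamma^{<\mu}}(z,*)\le C\}$ (the purely hyperbolic hypothesis) to extract a uniform lower bound on core geodesic lengths, and convert that into a uniform upper bound on tube diameters. The one cosmetic slip is writing $d_{M_G}(*,\tau(x))\le\length(\eta)\le C+(C+1)\max_i\diam(U_i)$: the second inequality is false for $\eta$ itself, since $\eta$ may meander inside a tube; what you mean — and what the paper also states without fuss — is that one may shortcut $\eta$ inside each tube by a path of length at most its diameter, so the \emph{distance} (not $\length(\eta)$) admits that bound.
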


Armed with this fact we can conclude the proof of Proposition \ref{diam-bound} in the case that $\Gamma$ is purely hyperbolic. We  use an argument similar to that in the proof of Proposition 5.28 in \cite{Gromov-book}.

Given any element $g\in G$ consider a path $\gamma\colon [0,1]\to\tilde X_G$ connecting $*$ and $g(*)$; here $\tilde X_G$ is the universal cover of $X_G$ and $G=\pi_1(X_G,*)$ acts on $\tilde X_G$ by deck-transformations. Consider the image of $\gamma$ under the lift $\tilde\tau\colon \tilde X_G\to\BH^n$ of the extension $\tau:X_G \to M_G$ and choose points $t_0=0<t_1<\dots<t_k=1$ such that
$$d_{M_G}(\tilde\tau(\gamma(t_i)),\tilde\tau(\gamma(t_{i+1})))\le 1$$
for all $i$. By Fact \ref{fact1}, each one of the points $\tilde\tau(\gamma(t_i))$ can be joined to some element of the orbit $G*\subset\BH^n$ of the base point $*$ by a path $\eta_i$ of length at most $D$. Now, setting
$$\sigma_i=\tilde\tau\circ\eta\vert_{[t_{i-1},t_i]}$$
we have that the path $\tilde\tau\circ\gamma$ is the concatenation of paths $\sigma_1,\dots,\sigma_k$ and hence homotopic relative to its endpoints to the concatenation of $\sigma_1$, $\eta_1$, $\eta_1^{-1}$, $\sigma_2$, $\eta_2$, $\eta_2^{-1}$, $\sigma_3$, $\dots$, $\sigma_{k-1}$, $\eta_{k-1}$, $\eta_{k-1}^{-1}$ and $\sigma_k$. We have thus the following factorization of $g$ in $G=\pi_1(M_G,*)$: 
$$g=(\sigma_1\eta_1)\cdot(\eta_1^{-1}\sigma_2\eta_2)\cdot(\eta_2^{-1}\sigma_3\eta_3)\cdot\dots\cdot(\eta_{k-2}^{-1}\sigma_{k-1}\eta_{k-1})\cdot(\eta_{k-1}^{-1}\sigma_k).$$
By construction each one of these elements in represented by a loop of length at most $d=2D+1$. Since $g$ was arbitrary, this concludes the proof of Proposition \ref{diam-bound} in the case that $\Gamma$ is purely hyperbolic.
\medskip

We discuss now the modifications needed to deal with the case that $\Gamma$ has parabolic elements. First we note that the proof of Fact \ref{fact1} still applies as long as we do not allow ourselves to travel through the cuspidal part of $M_G$. More concretely we have:

\begin{fact}\label{fact2}
There is a constant $D>0$ such that for each intermediate group $H\subset G\subset\Gamma$ filled by $H$ one has
$$d_{M_G}(\tau(x),*)\le D$$
for every point $x\in X_G$ for which there is a path $\eta:[0,1]\to X_G$ with $\eta(0)=*$, $\eta(1)=x$ and $\tau(\eta([0,1]))\subset M_G^{cusp\ge\mu}$. Here $\tau:X_G\to M_G$ is the extension of $\tau^H$ provided by Proposition \ref{prop:good extension}. \qed
\end{fact}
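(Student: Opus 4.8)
The plan is to rerun the proof of Fact~\ref{fact1} essentially word for word, the only delicate point being the step where the hypothesis that $\Gamma$ is purely hyperbolic was used to guarantee compactness of the relative ball $\{z\in M_\Gamma : d_{\rel M_\Gamma^{<\mu}}(z,*)\le C\}$; I will argue that once excursions through the cuspidal part are forbidden, the corresponding relative ball is again compact, with \emph{no} assumption on parabolics. So fix $x\in X_G$ together with a path $\eta\colon[0,1]\to X_G$ from $*$ to $x$ with $\tau(\eta([0,1]))\subset M_G^{cusp\ge\mu}$, and note at once that if $\tau(x)$ lies within $\mu$ of $\tau(Y)$ we are done, since $d_{M_G}(\tau(x),*)\le\diam_{M_G}(\tau(Y))+\mu\le\diam_{M_H}(\tau^H(Y))+\mu$.

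First I would extract from the proof of Lemma~\ref{lem-bounded-diam1} the stronger statement it actually establishes: combining the area bound $\Area(\tau)<C_0$ of Proposition~\ref{prop:good extension} with the lower area bound of Lemma~\ref{lemma:thick}, the \emph{relative length} with respect to $M_G^{<\mu}$ of the image $\tau\circ\gamma$ of \emph{any} path $\gamma$ in $X_G$ whose image stays $\mu$-far from $\tau(Y)$ is at most a constant $C$ depending only on $\mu$, $n$ and $C_0$ — in particular not on $G$. Applying this to the portion of $\eta$ obtained after removing the part preceding the last moment its image meets the $\mu$-neighbourhood of $\tau(Y)$ in $M_G$, and absorbing that removed part into the uniform bound $\diam_{M_H}(\tau^H(Y))+\mu$, I obtain a path in $M_G^{cusp\ge\mu}$ of relative length $\le C$ joining a point at bounded distance from $*$ to $\tau(x)$. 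The gain over Fact~\ref{fact1} is that, by construction, this path avoids the cuspidal part.

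Next I would follow Fact~\ref{fact1}: such a path meets at most $C+1$ components of $M_G^{<\mu}$ (these being mutually at distance $\ge1$), and every component it meets is a \emph{bounded} one — a Margulis tube — because it avoids $M_G^{cusp<\mu}$, which is exactly the union of the unbounded components. Hence $d_{M_G}(\tau(x),*)$ is controlled by $C$ plus the sum of the diameters of at most $C+1$ Margulis tubes of $M_G$, and it remains to bound those diameters uniformly in $G$. Projecting under the local isometry $M_G\to M_\Gamma$ and using that (i) the core geodesic of a tube of $M_G$ maps to a closed geodesic of length $<\mu$, hence lies in a tube of $M_\Gamma$, so that (ii) $M_G^{cusp\ge\mu}$ maps into $M_\Gamma^{cusp\ge\mu}$, each of these tubes projects to a Margulis tube of $M_\Gamma$ meeting the set $N$ of points of $M_\Gamma^{cusp\ge\mu}$ joinable to $*$ by a path in $M_\Gamma^{cusp\ge\mu}$ of relative length $\le C'$, where $C'=C+\diam_{M_H}(\tau^H(Y))+\mu$. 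Now $\overline N$ is compact, with no hypothesis on parabolics: a path realising the relative-length bound decomposes into at most $C'+1$ thick-part arcs of total length $\le C'$ separated by excursions through tubes of $M_\Gamma$; starting from $*$ one reaches a compact subset of $M_\Gamma^{\ge\mu}$ (contained in $\overline{B_{M_\Gamma}(*,C')}$, compact by Hopf--Rinow together with closedness of the thick part), adjacent to which there are only finitely many tubes of $M_\Gamma$ (they are mutually $\ge1$ apart, so only finitely many meet a compact set), each of which is relatively compact, and iterating this $C'+1$ times keeps the reachable set inside a compact set. Compactness of $\overline N$ gives, by the standard finiteness of closed geodesics of bounded length in a compact region of a complete hyperbolic manifold, a positive lower bound $\ell_0$ for the lengths of the core geodesics of all tubes of $M_\Gamma$ meeting $N$; pulling back, every tube of $M_G$ traversed by our path has core length $\ge\ell_0$, hence diameter at most a constant $d_0=d_0(\ell_0,\mu,n)$ by the elementary geometry of Margulis tubes. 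Collecting the estimates, $d_{M_G}(\tau(x),*)\le\diam_{M_H}(\tau^H(Y))+\mu+C+(C+1)\,d_0=:D$, which is independent of $G$.

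The main obstacle — and the only genuinely new ingredient compared with Fact~\ref{fact1} — is the compactness of $\overline N$: this is precisely where the role of the ``no cusps'' hypothesis gets relocated. Bounded thin components cause no harm, because each is relatively compact and only finitely many are reachable within bounded relative length, whereas a single excursion into the cuspidal part would let a path escape to infinity at zero relative cost and destroy compactness. One should also keep track that all the ``bounded independently of $G$'' constants appearing above ($C$, $C'$, $\ell_0$, $d_0$) depend only on the ambient data $\Gamma$, $H$, $Y$, $\tau^H$, $\mu$, $n$ — which is the case, since $C$ comes from Lemma~\ref{lem-bounded-diam1} and Proposition~\ref{prop:good extension}, while $\ell_0$ and $d_0$ are determined by the fixed manifold $M_\Gamma$.
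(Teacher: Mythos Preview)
Your overall strategy is exactly what the paper intends: rerun the proof of Fact~\ref{fact1}, using that the path never enters a cusp of $M_G$ so that every thin component it meets is a tube, and then control those tubes by projecting to the fixed manifold $M_\Gamma$. The compactness argument you give for $\overline N$ --- the inductive ``finitely many tubes reachable at each step'' argument --- is also the right mechanism. However, the reduction to $N$ hinges on your claim (ii), and that claim is false.

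Concretely, (ii) asserts that $M_G^{cusp\ge\mu}$ maps into $M_\Gamma^{cusp\ge\mu}$ under the covering $M_G\to M_\Gamma$. Suppose $\Gamma$ contains a parabolic element $p$ which is not conjugate into $G$. Then the cusp of $M_\Gamma$ associated to $p$ lifts to a disjoint union of embedded horoballs in $M_G$; since no element of $G$ has small displacement deep inside such a horoball, these horoballs lie in $M_G^{\ge\mu}\subset M_G^{cusp\ge\mu}$, yet they project into $M_\Gamma^{cusp<\mu}$. So the projected path $\bar\sigma$ may well enter a cusp of $M_\Gamma$, and you cannot conclude that the projected tubes lie in your set $N$. (Note also that (ii) does not follow from (i): what (i) gives is that \emph{tubes} of $M_G$ project into tubes of $M_\Gamma$, but points of $M_G^{\ge\mu}$ need not project into $M_\Gamma^{cusp\ge\mu}$.)

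The repair is already implicit in your own compactness proof for $\overline N$: drop $N$ altogether and argue directly on the chain of tubes in $M_\Gamma$. Writing the path in $M_G^{cusp\ge\mu}$ as an alternation $\alpha_0\beta_1\alpha_1\cdots\beta_m\alpha_m$ of thick arcs $\alpha_i$ (total length $\le C$) and tube excursions $\beta_i$, the projections $\bar\alpha_i$ are arcs of the \emph{same} length (regardless of whether they enter cusps of $M_\Gamma$), and each $\bar\beta_i$ lands in a tube $\bar T_i$ of $M_\Gamma$ by your (i). Now $\bar T_1$ meets the compact ball $B(*,C,M_\Gamma)$, hence belongs to a finite set $\mathcal T_1$; inductively $\bar T_{i+1}$ meets the $C$-neighbourhood of $\bigcup_{T\in\mathcal T_i}T$, again compact, so $\bar T_{i+1}\in\mathcal T_{i+1}$ finite. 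Since $m\le C+1$, all $\bar T_i$ lie in the fixed finite set $\mathcal T_{C+1}$, yielding the uniform lower bound on core lengths you need. A smaller point: what the proof of Lemma~\ref{lem-bounded-diam1} literally yields is a bound on the relative \emph{distance} $d_{\rel M_G^{<\mu}}(*,\tau(x))$, not on the relative length of $\tau\circ\eta$ itself; but the balls-and-tubes structure in that proof, together with the hypothesis $\tau(\eta)\subset M_G^{cusp\ge\mu}$, lets you manufacture a new path in $M_G^{cusp\ge\mu}$ of bounded relative length, which is what the decomposition above requires.
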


Our next goal is to see that $\tau^{-1}(M_G^{cusp\ge\mu})$ is connected. Taking intersections of $\tau(X_G)$ with the boundary of the cuspidal components of $M_G^{<\mu}$ we obtain from the Seifert-van Kampen theorem a description of $G=\tau_*(\pi_1(X_G))$ as the fundamental group of an essential graph of groups $\CG$ whose edge groups are of the form $\tau_*(\pi_1(Z))$ with $Z$ connected component of $\tau^{-1}(\D M_G^{cusp\ge\mu})$ and whose vertex groups are of the form $\tau_*(\pi_1(V))$ where $V\subset X_G$ are connected components of either $\tau^{-1}(M_G^{cusp<\mu})$ or $\tau^{-1}(M_G^{cusp\ge\mu})$. Note that the edge groups as well as the vertex groups corresponding to components of $\tau^{-1}(M_G^{cusp<\mu})$ in $\CG$ are parabolic subgroups of $G$. On the other hand, the convexity properties of $\tau$ (c.f.\;Proposition \ref{prop:good extension}) imply that the vertex groups corresponding to components $\tau^{-1}(M_G^{cusp\ge \mu})$ are non-elementary. We leave the details to the reader.

Proving that $\tau^{-1}(M_G^{cusp\ge\mu})$ is connected, amounts to proving that $\CG$ has a single vertex of the form $\tau_*(\pi_1(V))$ with $V$ a connected component of $\tau^{-1}(M_G^{cusp\ge\mu})\subset X_G$:

\begin{fact}\label{fact3}
The graph of groups $\CG$ has only a single vertex of the form $\tau_*(\pi_1(V))$ with $V$ connected component of $\tau^{-1}(M_G^{cusp\ge\mu})$.
\end{fact}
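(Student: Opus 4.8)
The plan is to argue by contradiction using the fact that $H$ fills $G$, exactly as this filling hypothesis was used throughout Section \ref{sec:BCG}. Suppose $\CG$ had two distinct vertices $v_1,v_2$ of the form $\tau_*(\pi_1(V_1))$, $\tau_*(\pi_1(V_2))$ with $V_1,V_2$ connected components of $\tau^{-1}(M_G^{cusp\ge\mu})$. First I would locate the graph $Y$: since $\tau(Y)=\tau^H(Y)$ lies in the $\mu$-thick part of $M_\Gamma$, and hence of $M_G$, it lies in particular in $M_G^{cusp\ge\mu}$, so $Y$ is contained in one of the components of $\tau^{-1}(M_G^{cusp\ge\mu})$, say in $V_1$. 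Then $H=\tau^H_*(\pi_1(Y))$ is conjugate into the vertex group at $v_1$.

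Next I would cut the graph of groups $\CG$ along an edge separating $v_1$ from $v_2$ (such an edge exists after possibly collapsing, since $\CG$ is a connected graph and $v_1\neq v_2$): this produces a splitting of $G$ as an amalgamated product or HNN extension over a parabolic edge group $\tau_*(\pi_1(Z))$ in which $H$ is conjugate into one side. Because $\Gamma$, and hence $G$, is locally free with only cyclic maximal abelian subgroups, every parabolic subgroup of $G$ is infinite cyclic; moreover $G$ being free, it admits no nontrivial splitting over an infinite cyclic subgroup relative to which it is \emph{one-ended}... but the cleaner route is: since $G$ is free and $Z$-edge groups are parabolic hence infinite cyclic but also \emph{contained in a free factor structure}, one can refine this splitting to a free splitting. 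Concretely, in the Kulkarni decomposition $G=P_1*\dots*P_k$ each parabolic $P_i$ is cyclic, so $G$ splits as a free product relative to all its parabolic subgroups; in particular $G$ splits as a free product $G=A*B$ (or $A*\BZ$) in which the edge group $\tau_*(\pi_1(Z))$ and all parabolic edge/vertex groups of $\CG$ on one side lie in $A$, with $H$ conjugate into $A$ and the vertex group at $v_2$ conjugate into $B$ (which is nontrivial since it contains the non-elementary group $\tau_*(\pi_1(V_2))$). This exhibits a free splitting of $G$ relative to $H$, contradicting the hypothesis that $H$ fills $G$.

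The main obstacle is the passage from the splitting over parabolic (cyclic) edge groups provided by $\CG$ to an honest \emph{free} splitting relative to $H$ — i.e.\;making sure that refining the cyclic splitting does not inadvertently break $H$ across the new free factors, and that the $v_2$-side remains nontrivial. I would handle this by invoking the relative version of Kulkarni's theorem recorded in the Remark after its statement: $G$ splits as a free product relative to its parabolic subgroups, and since each parabolic is cyclic this is a genuine free splitting in which every parabolic subgroup — in particular every edge group of $\CG$ and every $\tau^{-1}(M_G^{cusp<\mu})$-vertex group — is conjugate into a factor. Superimposing this free splitting with $\CG$ and using that $H\subset V_1$ is non-elementary (hence not conjugate into any cyclic parabolic) and lies on the $v_1$-side, while $\tau_*(\pi_1(V_2))$ is non-elementary and lies on the $v_2$-side, forces a nontrivial free splitting of $G$ relative to $H$, which is the desired contradiction. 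Hence $\CG$ has a single $\tau^{-1}(M_G^{cusp\ge\mu})$-vertex, i.e.\;$\tau^{-1}(M_G^{cusp\ge\mu})$ is connected.
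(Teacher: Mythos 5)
Your proof has a genuine gap precisely at the point you flag as the ``main obstacle,'' and the attempted repair does not close it. The issue is which group you apply Kulkarni's theorem to. You invoke the Kulkarni decomposition of $G$ itself, $G=P_1*\dots*P_k$ relative to the parabolics of $G$. But in that decomposition the factors $P_i$ are elementary (cyclic), while $H$ is non-elementary, so $H$ is \emph{not} conjugate into any factor. There is no regrouping $G=A*B$ of the Kulkarni factors of $G$ with $H$ conjugate into $A$; if such a regrouping existed, $G$ would already split freely relative to $H$, which is exactly what the hypothesis ``$H$ fills $G$'' forbids. In other words, $G$ always splits freely by Kulkarni, but never relative to $H$ --- these are consistent --- and ``superimposing'' the Kulkarni tree of $G$ with the Bass--Serre tree of $\CG$ is not an operation that produces a free splitting in which $H$ is elliptic. (There is also a smaller issue in your step 2: a connected graph need not have a cut edge separating $v_1$ from $v_2$, so the one-edge amalgam you want to extract does not obviously exist; one should work with the tree and refine, rather than cut an edge of the underlying graph.)

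The paper's argument applies Kulkarni's theorem to the \emph{other} vertex group $\tau_*(\pi_1(W))$, not to $G$. That group is finitely generated, non-elementary, with Cantor limit set, so it admits a non-trivial free splitting relative to \emph{its} parabolic subgroups; since every edge group of $\CG$ incident to $W$ is a parabolic subgroup of $\tau_*(\pi_1(W))$, this local splitting is compatible with those edges and refines $\CG$ at the vertex $v_2$ only. The result is a free splitting of $G$ in which all the \emph{other} vertex groups of $\CG$ --- in particular $\tau_*(\pi_1(V))\supset H$ --- remain elliptic, contradicting that $H$ fills $G$. Your proof would go through if you replaced ``Kulkarni decomposition of $G$'' by ``Kulkarni decomposition of $\tau_*(\pi_1(V_2))$ relative to its parabolics'' and then refined $\CG$ at that vertex, rather than attempting to cut an edge and refine a cyclic splitting of $G$.
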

\begin{proof}

Suppose $\tau_*(\pi_1(V))$ and $\tau_*(\pi_1(W))$ are vertex groups of $\CG$ for two different components $V$ and $W$ of $\tau^{-1}(M_G^{cusp \ge \mu})$, and suppose for the sake of concreteness that $V\subset X_G$ is the component containing $Y$. The vertex groups $\tau_*(\pi_1(V))$ and $\tau_*(\pi_1(W))$ are finitely generated, non-elementary, and -- being subgroups of $\Gamma$ -- their limit sets are Cantor sets.  As remarked after the statement of Kulkarni's theorem in the introduction, $\tau_*(\pi_1(W))$ splits as a non-trivial free product relative to its parabolic subgroups. Any such splitting induces a free splitting of $\pi_1(\CG)$ relative to all its vertex groups but $\tau_*(\pi_1(W))$, and in particular relative to $\tau_*(\pi_1(V))$. Since $H=\tau_*(\pi_1(Y))\subset\tau_*(\pi_1(V))$, this contradicts the assumption that $H$ fills $G$.
\end{proof}

Let from now on $\tau_*(\pi_1(V))$ be the unique vertex group of $\CG$ corresponding to a component $V$ of $\tau^{-1}(M_G^{cusp\ge\mu})$, note that $\tau(V)$ has diameter (as a subset of $M_G$) bounded from above independently of $G$ by Fact \ref{fact2}. The same argument as in the proof in the purely hyperbolic case shows that the vertex group $\hat H=\tau_*(\pi_1(V))$ is generated by elements of length bounded by some constant which does not depend on $G$. In particular, when $G$ varies, $\hat H$ may also change, but only within a finite set of subgroups of $\Gamma$. Note that, since (1) $G=\pi_1(\CG)$ is obtained from $\hat H$ by amalgamating over parabolic subgroups, (2) $\hat H$ has finitely many conjugacy classes of parabolic subgroups, (3) each such subgroup has finite index in the corresponding maximal parabolic subgroup in $\Gamma$, and (4) maximal parabolic subgroups are self-normalizing, there are only finitely many choices for the group $G$ for any given $\hat H$. Thus there are only finitely many choices for $G$ altogether. The uniform upper bound for the lengths of generators of $G$ follows -- we leave the details to the reader. This concludes the proof of Proposition \ref{diam-bound}.
\end{proof}


\section{Locally free group having a relatively small limit set}\label{sec:grope}

In this section we prove Theorem \ref{grope}:

\begin{named}{Theorem \ref{grope}}
For each $\epsilon>0$ there is a discrete, purely hyperbolic subgroup $\Gamma$ of $\Isom_+(\BH^3)$ which is not free but whose limit set $\Lambda_\Gamma$ is a Cantor set with Hausdorff dimension $\dim_\CH(\Lambda_\Gamma)\le 1+\epsilon$.
\end{named}

Let $r_0>0$ be a large constant to be fixed later, $r>r_0$, and let $S_r$ a compact hyperbolic surface for which 
\begin{itemize}
\item[(a)] $\D S_r$ consists of a single geodesic of length $1$ and has a tubular neighborhood of width at least $r$, and 
\item[(b)] $S_r$ contains a non-separating simple closed geodesic $\gamma$ of length $1$ and which again has a tubular neighborhood of width at least $r$.
\end{itemize}


\begin{figure}[h!]
\begin{overpic}[scale=0.2,unit=1mm]{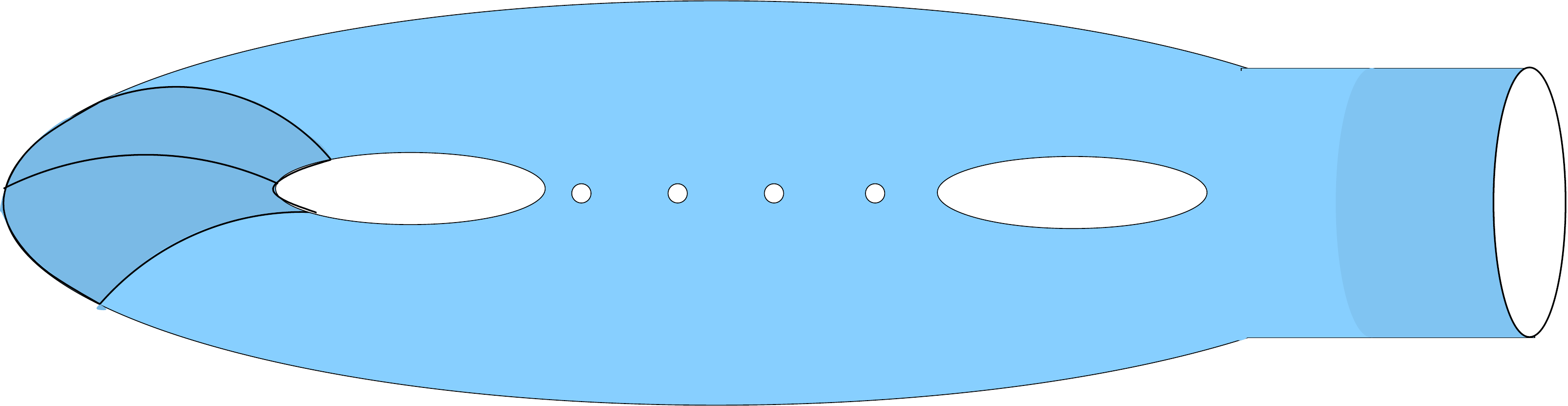}
\put(-2,10){\tiny $\gamma$}
\end{overpic}
\caption{Surface $S_r$} 
\label{fig:Surface_Sr}
\end{figure}

The fundamental group of $S_r$ is free and we choose a standard free basis 
\[
\pi_1(S_r,*)=\langle\gamma_1,\dots,\gamma_{2g}\vert -\rangle
\]
with $\gamma_1=\gamma$, where $*\in\gamma$ is a fixed base point. Note that $\D S_r$ is (freely) homotopic to $\prod_{i=1}^g[\gamma_{2i-1},\gamma_{2i}]$, and that for $r$ large the area of $S_r$ is also large, meaning that the genus $g$ is also large. 

Let now $Y_r$ be the 2-complex obtained from $S_r$ by gluing isometrically $\D S_r$ onto the geodesic $\gamma$, and endow $Y_r$ with the largest path metric with respect to which the quotient map $S_r\to Y_r$ preserves lengths of paths. The fundamental group $\pi_1(Y_r)$ of $Y_r$ is an HNN-extension of $\pi_1(S_r)$, i.e.\;it has a presentation
\begin{equation}\label{eq-presentation}
\pi_1(Y_r,*)=\left\langle\gamma_1,\dots,\gamma_{2g},\tau\;\Big\vert\;\gamma_1=\tau\left(\prod_{i=1}^g[\gamma_{2i-1},\gamma_{2i}]\right)\tau^{-1}\right\rangle,
\end{equation}
where $\tau$ is the stable letter and the base point $*\in Y_r$ is the image of the base point of $S_r$ under the quotient map $S_r\to Y_r$. 

The universal cover $\tilde Y_r$ of $Y_r$ consists of isometrically embedded copies of the universal cover $\tilde S_r$ of $S_r$ glued along geodesic curves; we refer to these copies as {\em strata}. Choose also a lift $\tilde *$ of the base point $*\in Y_r$ and let $\tilde S_r^0$ be the stratum which contains $\tilde *$ in its interior. In other words, $\tilde S_r^0$ is the image of the map $\tilde S_r\to\tilde Y_r$ which covers the quotient map $S_r\to Y_r$ and maps a lift of $*\in S_r$ to the lift $\tilde *\in\tilde Y_r$.

The universal cover $\tilde S_r$ admits a unique (up to isometry) embedding into the hyperbolic plane $\BH^2$. Choose a copy of $\BH^2$ in $\BH^3$ and note that the induced isometric embedding 
\[
\tilde\phi_r \colon \tilde S_r^0\to\BH^2\subset\BH^3
\]
is equivariant under some discrete Fuchsian representation 
\[
\rho^0_r \colon \pi_1(S_r)\to\Isom_+(\BH^2)\subset\Isom_+(\BH^3).
\]
We extend $\rho^0_r$ to a representation 
\[
\rho_r \colon \pi_1(Y_r)\to\Isom_+(\BH^3)
\]
by choosing $\rho_r(\tau)$ to be one of the elements in $\Isom_+(\BH^3)$ mapping the axis of $\rho_r^0(\gamma_1)$ to the axis of $\rho_r^0(\prod_{i=1}^g[\gamma_{2i-1},\gamma_{2i}])$ in such a way that $\tau(\BH^2)$ meets $\BH^2$ orthogonally; here $\tau$ is as in \eqref{eq-presentation}.

The isometric embedding $\tilde\phi_r \colon \tilde S_r^0\to\BH^3$ extends now uniquely to a $\rho_r$-equivariant map
\[
\tilde\phi_r\colon \tilde Y_r\to\BH^3,
\]
which preserves lengths of paths and hence is $1$-Lipschitz. More concretely, the image under $\tilde\phi_r$ of a geodesic in $\tilde Y_r$ is a piecewise geodesic path consisting of geodesics segments of length at least $2r$ and meeting with angles at least $90^\circ$. Thus, given $\epsilon>0$ small, there are constants $r_0>0$ and $C>0$ so that for $r\ge r_0$ the image of a geodesic $t\mapsto\eta(t)$ in $\tilde Y_r$ is a $(1+\epsilon,C)$-quasi-geodesic in $\BH^3$, that is, 
\[
\frac {1}{1+\varepsilon}d_{\tilde Y_r}(\eta(s),\eta(t))-C\le d_{\BH^3}(\tilde\phi_r(\eta(s)),\tilde\phi_r(\eta(t)))\le (1+\epsilon)d_{\tilde Y_r}(\eta(s),\eta(t))+C.
\]
for $t,s\in \BR$. We summarize these observations as follows:

\begin{lem}
For each $\epsilon>0$ there are constants $r_0>0$ and $C>0$ such that the $\rho_r$-equivariant map $\tilde\phi_r\colon \tilde Y_r\to\BH^3$ is a $(1+\epsilon,C)$-quasi-isometry for all $r\ge r_0$.\qed
\end{lem}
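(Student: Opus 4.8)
The plan is to verify separately the two inequalities displayed above, which together say that $\tilde\phi_r$ is a $(1+\epsilon,C)$-quasi-isometric embedding. The right-hand inequality $d_{\BH^3}(\tilde\phi_r(p),\tilde\phi_r(q))\le(1+\epsilon)\,d_{\tilde Y_r}(p,q)+C$ is automatic from the already-recorded fact that $\tilde\phi_r$ preserves lengths of paths and is therefore $1$-Lipschitz. All the content is in the left-hand inequality, and since $\tilde Y_r$ is a geodesic space it suffices to prove it when $p$ and $q$ are the endpoints of a unit-speed geodesic segment $\eta\colon[0,\ell]\to\tilde Y_r$, where $\ell=d_{\tilde Y_r}(p,q)$.

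First I would describe $\alpha:=\tilde\phi_r\circ\eta$ precisely. The geodesic $\eta$ crosses a finite sequence of strata and meets each along a geodesic arc of that copy of $\tilde S_r$; since $\tilde\phi_r$ restricts to an isometric embedding of each stratum onto a totally geodesic $\BH^2\subset\BH^3$, the image $\alpha$ is a unit-speed broken geodesic of $\BH^3$ of total length $\ell$ whose successive geodesic pieces have length at least $2r$ and meet at interior angle at least $90^\circ$. The length bound $2r$ on the pieces is where hypotheses (a) and (b) enter: the two curves identified in $Y_r$, namely $\gamma$ and $\D S_r$, each carry an embedded $r$-collar in $S_r$, so any two of the gluing geodesics met by $\eta$ inside a single copy of $\tilde S_r$ lie at distance at least $2r$ from each other -- here one also uses that $\gamma$ and $\D S_r$ are themselves at distance at least $2r$ in $S_r$, a property one builds into the construction of $S_r$. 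The angle bound is the orthogonality $\tau(\BH^2)\perp\BH^2$ imposed in the choice of $\rho_r(\tau)$.

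Then I would invoke the broken-geodesic estimate in $\mathrm{CAT}(-1)$ geometry: a unit-speed broken geodesic of $\BH^3$ all of whose pieces have length $\ge L$ and all of whose angles are $\ge\pi/2$ is a $(\lambda_L,c_L)$-quasigeodesic with $\lambda_L\to1$ and $c_L$ bounded as $L\to\infty$. The mechanism is the $\mathrm{CAT}(-1)$ law of cosines: for three consecutive vertices $v_{i-1},v_i,v_{i+1}$ one has $\cos\angle v_i\le0$, hence
\[
\cosh d(v_{i-1},v_{i+1})\ \ge\ \cosh d(v_{i-1},v_i)\,\cosh d(v_i,v_{i+1})\ \ge\ \tfrac14\,e^{\,d(v_{i-1},v_i)+d(v_i,v_{i+1})},
\]
so each corner costs at most $2\log2$ in distance. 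Promoting this to a bound over all $k$ pieces requires an induction showing that the chord $[v_0,v_j]$ enters $v_j$ almost parallel to the long final piece $[v_{j-1},v_j]$ -- because the quasigeodesic property already established for $v_0\cdots v_{j-1}$ forces that chord to fellow-travel the broken path -- so that the angle between $[v_0,v_j]$ and $[v_j,v_{j+1}]$ stays bounded away from $0$ and the law of cosines can be iterated. This yields $d_{\BH^3}(\tilde\phi_r(p),\tilde\phi_r(q))\ge\ell-(k-1)\kappa_r$ with $\kappa_r$ bounded independently of $r$, and since $k\le\ell/(2r)+2$ one obtains $d_{\BH^3}(\tilde\phi_r(p),\tilde\phi_r(q))\ge(1-\kappa_r/2r)\,\ell-C\ge\frac{1}{1+\epsilon}\,d_{\tilde Y_r}(p,q)-C$ once $r\ge r_0(\epsilon)$, the contribution of the two possibly-short end pieces being absorbed into $C$. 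Alternatively one can quote a local-to-global criterion for quasigeodesics in Gromov-hyperbolic spaces together with the remark that its constants improve as the pieces lengthen.

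The step I expect to be the main obstacle is the angle-alignment induction in the third paragraph: the naive iteration of the law of cosines does not close up, since one only controls the angle between two consecutive pieces of $\alpha$ and not the angle between the chord $[v_0,v_j]$ and the next piece, and bridging this gap genuinely uses fellow-travelling of chords with broken paths in negative curvature. A secondary point to nail down is the uniform lower bound $2r$ on the length of each stratum-crossing of $\eta$, which, as indicated above, relies not only on (a) and (b) but also on arranging $\gamma$ and $\D S_r$ to be far apart in $S_r$ when that surface is constructed.
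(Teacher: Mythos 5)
Your approach matches the paper's, which presents this lemma as an immediate consequence of the preceding observation that $\tilde\phi_r$ is $1$-Lipschitz and sends geodesics of $\tilde Y_r$ to broken geodesics of $\BH^3$ with pieces of length $\ge 2r$ meeting at angles $\ge\pi/2$, and then simply asserts (with $\qed$) the standard fact that such broken geodesics are $(1+\epsilon,C)$-quasi-geodesics once $r$ is large. You correctly identify both the mechanism behind that assertion -- the $\mathrm{CAT}(-1)$ law of cosines together with the fellow-travelling bootstrap needed to control the angle between the chord $[v_0,v_j]$ and the next piece -- and the (minor, easily arranged) point that one should keep $\gamma$ and $\partial S_r$ far apart in $S_r$ so that every stratum-crossing is long, a detail the paper leaves implicit in conditions (a) and (b).
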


This lemma implies in particular that the representation $\rho_r$ is discrete and faithful with convex-cocompact image for $r\ge r_0$. In particular, $\rho_r(\pi_1(Y_r))$ is purely hyperbolic. We show next that its limit set $\Lambda_{\rho_r}=\Lambda_{\rho_r(\pi_1(Y_r))}$ has relatively small Hausdorff dimension $\dim_{\CH}(\Lambda_{\rho_r})$.

Since the representation $\rho_r$ is convex-cocompact, the Hausdorff dimension of $\Lambda_{\rho_r}$ equals the critical exponent $\delta_{\rho_r}$ of the image of $\rho_r$. Since the map $\tilde\phi_r$ is a $(1+\epsilon,C)$-quasi-isometry, we can bound the critical exponent of $\rho_r(\pi_1(Y_r))$ by the critical exponent of the action $\pi_1(Y_r)\actson\tilde Y_r$ times $1+\epsilon$. Since the action $\pi_1(Y_r)\actson \tilde Y_r$ is cocompact, its critical exponent coincides with the volume growth entropy 
\[
h(Y_r):=\limsup_{R\to\infty}\frac{\ln \vol(B(z,R,\tilde Y_r))}{R}
\]
of $\tilde Y_r$. Here $B(z,R,\tilde Y_r)$ is the ball of radius $R$ centered at some fixed by otherwise arbitrary point $z\in\tilde Y_r$ and $\vol(B(z,R,\tilde Y_r))$ is its area; for the sake of concreteness suppose that $z$ is a manifold point contained in the stratum $\tilde S_r^0$.

By combining these observations, we have
\begin{equation}\label{eq-dim-vol}
\dim_\CH(\Lambda_{\rho_r})\le (1+\epsilon) h(Y_r)
\end{equation}
for all $r\ge r_0$ and it suffices to estimate the volume growth entropy $h(Y_r)$ from above.

The embedding $\tilde \phi_r \colon \tilde S_r^0\to \BH^2$ extends to a map
\[
\pi\colon \tilde Y_r\to\BH^2
\]
with the following properties\footnote{One could compare the map $\pi$ to the projection of a building to an apartment.}: 
\begin{itemize}
\item[(a)] $\pi$ preserves the lengths of paths and maps geodesic rays based at $z$ to geodesic rays in $\BH^2$, and
\item[(b)] the restriction of $\pi$ to each stratum of $\tilde Y_r$ is an isometric embedding.
\end{itemize}

The preimage under $\pi$ of a geodesic segment $[\pi(z),a]$ in $\BH^2$ is a (possibly degenerate) tree with trivalent branching corresponding to the branching of $\tilde Y_r$. Points in the preimage $\pi^{-1}(a)$ of $a$ are leaves of the tree $\pi^{-1}([\pi(z),a])$. Since the distance along the tree between any two branching points is at least $2r$, we can the bound (generously) the number of leaves, and hence the cardinality of $\pi^{-1}(a)$ by 
\begin{equation}\label{eq-pobrepepe}
\vert\pi^{-1}(a)\vert\le 2^{1+\frac{d(\pi(z),a)}{2r}}.
\end{equation}
Taking into account that 
\[
\pi(B(z,R,\tilde Y_r))=B(\pi(z),R,\BH^2)\ \ \hbox{and}\ \ \pi^{-1}(B(\pi(z),R,\BH^2))=B(z,R,\tilde Y_r)
\]
we obtain
\[
\vol(B(z,R,\tilde Y_r))=\int_{B(\pi(z),R,\BH^2)}\vert\pi^{-1}(a)\vert \vol_{\BH^2}(a) 
\le 2\pi\int_0^R 2^{1+\frac{t}{2r}}\sinh(t)\mathrm{d}t.
\]
This implies immediately that 
\begin{equation}\label{eq-vol}
h(Y_r)\le 1+\frac{\ln 2}{2r}.
\end{equation}
Taking together \eqref{eq-dim-vol} and \eqref{eq-vol} we deduce:

\begin{lem}\label{lem-dim-bound}
For each $\epsilon>0$ there is $r_0>0$ with
\[
\dim_\CH(\Lambda_{\rho_r(\pi_1(Y_r))})\le 1+\epsilon
\]
for all $r\ge r_0$.\qed
\end{lem}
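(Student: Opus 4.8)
The plan is to read the lemma off directly from the two estimates already established, namely \eqref{eq-dim-vol} and \eqref{eq-vol}, the only point of substance being to keep apart the two distinct roles played by the letter ``$\epsilon$''. So fix the $\epsilon>0$ demanded in the statement; I may as well assume $\epsilon<1$, since otherwise $1+\epsilon\ge 2$ already dominates the Hausdorff dimension of any subset of $\BS^2$ and there is nothing to prove.

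First I would re-run the quasi-isometry part of the preceding discussion with a strictly smaller parameter: feeding $\epsilon/3$ in place of $\epsilon$ into the chain of observations culminating in \eqref{eq-dim-vol} produces a threshold $r_1=r_1(\epsilon)$ such that $\tilde\phi_r\colon\tilde Y_r\to\BH^3$ is a $(1+\epsilon/3,C)$-quasi-isometry, and hence
\[
\dim_\CH(\Lambda_{\rho_r(\pi_1(Y_r))})\le\Bigl(1+\tfrac\epsilon3\Bigr)h(Y_r)\qquad(r\ge r_1).
\]
Next I would combine this with the volume-growth bound \eqref{eq-vol}, which gives $h(Y_r)\le 1+\tfrac{\ln 2}{2r}$, to obtain $\dim_\CH(\Lambda_{\rho_r(\pi_1(Y_r))})\le(1+\tfrac\epsilon3)(1+\tfrac{\ln 2}{2r})$. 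Finally I would set $r_0:=\max\{r_1,\ 3\ln 2/(2\epsilon)\}$, which forces $\tfrac{\ln 2}{2r}\le\epsilon/3$ for $r\ge r_0$, so that the right-hand side is at most $(1+\epsilon/3)^2\le 1+\epsilon$, the last inequality being elementary for $\epsilon\le 1$ (indeed for all $\epsilon<3$). This is exactly the claimed bound.

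There is, honestly, no real obstacle here: all of the substantive work — the construction of $Y_r$ and of the representation $\rho_r$, the verification that $\tilde\phi_r$ is a $(1+\epsilon,C)$-quasi-isometry, the identification in the cocompact case of the critical exponent with the volume growth entropy $h(Y_r)$, and the tree-counting bound \eqref{eq-pobrepepe} underlying \eqref{eq-vol} — has already been carried out above. The one place that calls for a small amount of care is precisely the bookkeeping just described: the $\epsilon$ governing the quasi-isometry constant of $\tilde\phi_r$, and thus the admissible range of $r$, must be chosen small \emph{relative to} the $\epsilon$ appearing in the conclusion rather than equal to it.
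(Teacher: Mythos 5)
Your argument is correct and is exactly what the paper intends by ``Taking together \eqref{eq-dim-vol} and \eqref{eq-vol} we deduce'' --- the paper leaves the $\epsilon$-bookkeeping implicit, and you have simply made it explicit by running the quasi-isometry estimate with parameter $\epsilon/3$ and then enlarging $r_0$ so that $(1+\epsilon/3)(1+\tfrac{\ln 2}{2r})\le(1+\epsilon/3)^2\le 1+\epsilon$. Same approach, no gap.
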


The limit set of the group $\rho_r(\pi_1(Y_r))$ has small Hausdorff dimension but is connected, and hence does not, by itself, prove Theorem \ref{grope}. We find our desired group $\Gamma$ as a subgroup of $\rho_r(\pi_1(Y_r))$. Starting with the construction of our subgroup, we note that, by the presentation \eqref{eq-presentation} of $\pi_1(Y_r)$, there exists a homomorphism $\sigma \colon \pi_1(Y_r) \to \BZ$ satisfying $\tau \mapsto 1$ and $\gamma_i \mapsto 0$ for each $i=1,\ldots, 2g$.

Let $\tilde Y_r$ be the universal cover of $Y_r$ and $\hat Y_r=\tilde Y_r/\Ker(\sigma)$ the cover corresponding to the kernel of $\sigma$. By construction, the group $\BZ$ acts on $\hat Y_r$ with $\hat Y_r/\BZ=Y_r$. In fact, the quotient map $S_r\to Y_r$ lifts to an embedding $S_r\hookrightarrow\hat Y_r$ whose image we denote by $S_r^0$. For $k\in\BZ$, let $S_r^k$ be the translate of $S_r^0$ under the action $\BZ\actson\hat Y_r$. Then
\[
\hat Y_r=\cup_{k\in\BZ} S_r^k
\]
and any two consecutive ones $S_r^k$ and $S_r^{k+1}$ intersect on the curve corresponding to the boundary of $S_r^{k+1}$, that is, 
\[
S_r^{k+1}\cap S_r^k=\D S_r^{k+1}.
\]
We consider the complex 
\[
X_r=\cup_{k\ge 0}S_r^k
\]
consisting of the positive orbit of $S_r$ and let $\Gamma_r=\rho_r(\pi_1(X_r))$ be the image of its fundamental group under the representation $\rho_r$.


\begin{figure}[h!]
\begin{overpic}[scale=0.08,unit=1mm]{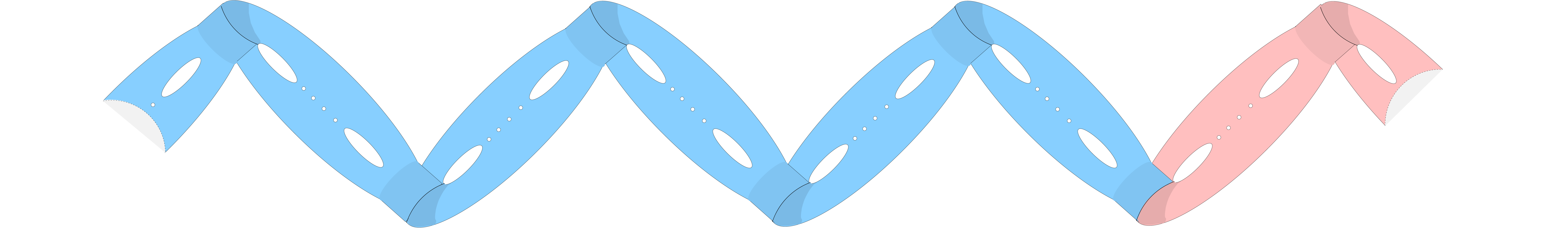}
\put(29,15){\tiny $S^4_r$} \put(44,5){\tiny $S^3_r$} \put(60,15){\tiny $S^2_r$} 
\put(74,5){\tiny $S^1_r$} \put(90,15){\tiny $S^0_r$} \put(104,5){\tiny $S^{-1}_r$}
\end{overpic}
\caption{The branching surface $\hat Y_r$ near $S^0_r$; $X_r$ in blue.}
\label{fig:Surface_Xr}
\end{figure}

The universal cover $\tilde X_r$ of $X_r$ is convex subset of the locally compact CAT(-1) space $\tilde Y_r$, and hence a locally compact CAT(-1) space itself. To understand the structure of its Gromov boundary $\D_\infty\tilde X_r$, consider two points $\theta,\eta\in\D_\infty\tilde X_r$ and a geodesic $g \colon \BR\to\tilde X_r$ with $\theta=g(\infty)$ and $\eta=g(-\infty)$. Suppose for the sake of concreteness that $g(\BR)$ intersects the manifold part of the base stratum $\tilde S_r^0$ in a compact set. This means that our geodesic $g(\BR)$ enters $\tilde S_r^0$ through a lift $\tilde\gamma_1$ of $\gamma_1$ and leaves through another one $\tilde\gamma_1'$. Both geodesics $\tilde\gamma_1$ and $\tilde\gamma_1'$ separate (individually) the space $\tilde X_r$ and can be separated by a compact set $K\subset\tilde S_r^0$; see Figure \ref{fig:On_lowest_stratum}. This proves that the limit points $\theta=g(\infty)$ and $\eta=g(-\infty)$ belong to different connected components of the boundary $\D_\infty\tilde X_r$ of $\tilde X_r$. A similar argument applies to all geodesics in $\tilde X_r$, meaning that the Gromov boundary $\D_\infty\tilde X_r$ is totally disconnected. Since $\D_\infty\tilde X_r$ is also perfect, it is a Cantor set. We leave the details to the reader.


\begin{figure}[h!]
\begin{overpic}[scale=.25,unit=1mm]{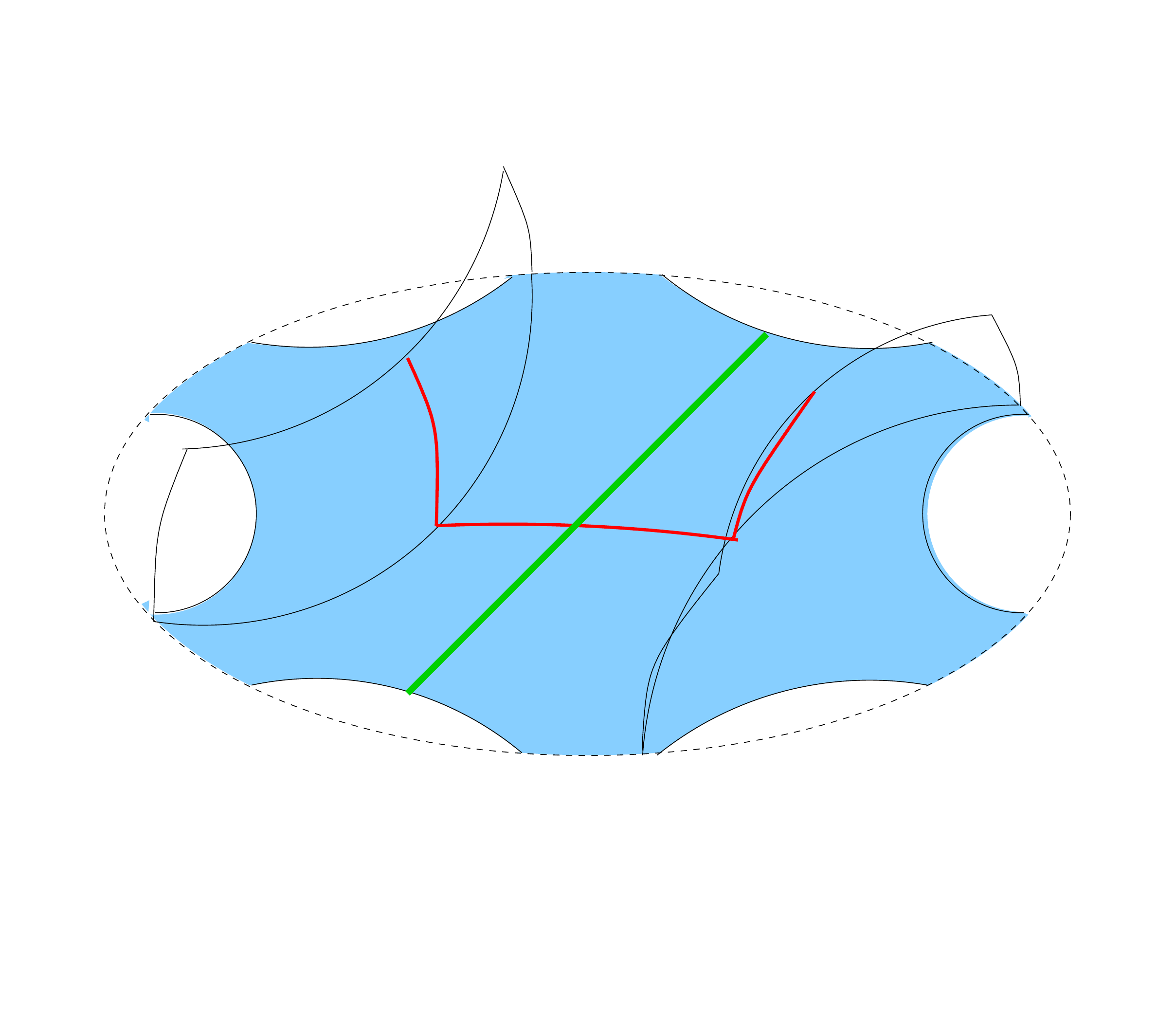}
\put(20,30){\tiny $g$}\put(33,33){\tiny $K$}
\end{overpic}
\caption{The geodesic $g$ (in red) entering and leaving the stratum $\tilde S^0_r$ (in blue). The segment $K$ is an example of a desired compact set.} 
\label{fig:On_lowest_stratum}
\end{figure}

Returning now to our previous setting, note that the boundary map $\D_\infty\tilde\phi$ of the $\rho$-equivariant quasi-isometry $\tilde\phi$ maps the Cantor set $\D_\infty\tilde X_r$ into some $\Gamma_r$-invariant Cantor set $\D_\infty \tilde\phi(\D_\infty\tilde X_r)\subset \D_\infty\BH^3$. We deduce that $\D_\infty\tilde\phi(\D_\infty\tilde X_r)$ contains the limit set $\Lambda_{\Gamma_r}$ of $\Gamma_r=\rho(\pi_1(X_r))$. 

\begin{lem}\label{lem-cantor}
The limit set $\Lambda_{\Gamma_r}$ is a Cantor set.\qed
\end{lem}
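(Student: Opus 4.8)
The plan is to verify, for $\Lambda_{\Gamma_r}$, the classical characterization of the Cantor set: it is nonempty, compact, metrizable, perfect, and totally disconnected. The inclusion $\Lambda_{\Gamma_r}\subset\D_\infty\tilde\phi(\D_\infty\tilde X_r)$ established just above does most of the work. Compactness and metrizability are automatic, since the limit set of any discrete group of isometries of $\BH^3$ is a closed subset of $\D_\infty\BH^3=\BS^2$. Total disconnectedness follows because $\D_\infty\tilde\phi(\D_\infty\tilde X_r)$ is a Cantor set — hence totally disconnected — and a subspace of a totally disconnected space is totally disconnected. So the only points needing genuine verification are that $\D_\infty\tilde\phi(\D_\infty\tilde X_r)$ really is a Cantor set and that $\Lambda_{\Gamma_r}$ is nonempty and perfect.

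For the first of these I would argue as follows. Since $\tilde X_r$ is a convex subset of the CAT(-1) space $\tilde Y_r$, the inclusion $\tilde X_r\hookrightarrow\tilde Y_r$ is an isometric embedding, and composing with the $(1+\epsilon,C)$-quasi-isometry $\tilde\phi\colon\tilde Y_r\to\BH^3$ shows that $\tilde\phi|_{\tilde X_r}$ is a quasi-isometric embedding of the proper geodesic Gromov hyperbolic space $\tilde X_r$ into $\BH^3$. By the Morse lemma, such an embedding induces a continuous injection $\D_\infty\tilde\phi\colon\D_\infty\tilde X_r\to\BS^2$; as $\D_\infty\tilde X_r$ is compact, this is a homeomorphism onto its image. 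Since $\D_\infty\tilde X_r$ has already been shown to be a Cantor set, so is $\D_\infty\tilde\phi(\D_\infty\tilde X_r)$. For the second point, note that $\Gamma_r=\rho_r(\pi_1(X_r))$ is discrete, being a subgroup of the discrete (indeed convex-cocompact) group $\rho_r(\pi_1(Y_r))$, and that $\Gamma_r$ is non-elementary because it contains $\rho_r(\pi_1(S_r^0))$, the image under the faithful representation $\rho_r$ of the nonabelian free group $\pi_1(S_r)$ acting as a convex-cocompact Fuchsian group. For a non-elementary discrete subgroup of $\Isom_+(\BH^3)$ the limit set is nonempty and perfect (see \cite{Kapovich-book, Ratcliffe}).

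Putting the pieces together, $\Lambda_{\Gamma_r}$ is a nonempty, compact, metrizable, perfect, totally disconnected space, hence homeomorphic to the Cantor set. I do not expect a serious obstacle here: everything reduces to assembling facts already established in the preceding discussion, together with the standard observation that a nonempty, closed, perfect subset of a Cantor set is itself a Cantor set. The only step where some care is warranted is the claim that $\tilde\phi$ restricts to a boundary embedding of $\D_\infty\tilde X_r$ with Cantor image — this relies on the convexity of $\tilde X_r$ in $\tilde Y_r$, which makes the restriction a genuine quasi-isometric embedding, and on the stability of quasi-geodesics in Gromov hyperbolic spaces.
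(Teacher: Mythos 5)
Your proposal is correct and follows essentially the same route as the paper: the paper deduces the lemma from the observation (in the paragraph immediately preceding it) that $\D_\infty\tilde\phi(\D_\infty\tilde X_r)$ is a $\Gamma_r$-invariant Cantor set containing $\Lambda_{\Gamma_r}$, together with the standard facts that limit sets of non-elementary discrete groups are nonempty, closed, and perfect. You have simply made explicit the intermediate steps — that convexity of $\tilde X_r$ in $\tilde Y_r$ makes $\tilde\phi|_{\tilde X_r}$ a quasi-isometric embedding, hence a boundary embedding by stability of quasi-geodesics, and that a nonempty closed perfect subset of a Cantor set is a Cantor set — all of which the paper leaves implicit.
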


Since, by construction, $\Gamma_r$ is a subgroup of the convex-cocompact subgroup $\rho_r(\pi_1(Y_r))$, it is purely hyperbolic. In particular, we obtain from Lemma \ref{lem-cantor} and Kulkarni's theorem that $\Gamma_r$ is locally free. We claim that it is not free.  

\begin{lem}\label{lem-not-free}
The group $\Gamma_r$ is not free.
\end{lem}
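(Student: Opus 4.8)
The plan is to show that $\Gamma_r$ is not free by producing a finitely generated subgroup that fills finitely generated subgroups of unbounded size; by Lemma \ref{core-exists} this cannot happen inside a free group. Identify $\Gamma_r$ with $\pi_1(X_r)$ (the restriction of $\rho_r$ to $\pi_1(X_r)\subset\pi_1(\hat Y_r)=\Ker(\sigma)\subset\pi_1(Y_r)$ is faithful), and put $G_0=\pi_1(S_r^0)$ and $\Gamma_r^N=\pi_1(\cup_{k=0}^N S_r^k)$, so that $\Gamma_r=\bigcup_N\Gamma_r^N$. Van Kampen gives $\Gamma_r^N=\Gamma_r^{N-1}*_{C_{N-1}}G_N$, where $G_N=\pi_1(S_r^N)$ and $C_{N-1}\cong\BZ$ is generated by the class of $\D S_r^N$: inside $G_N$ this is the boundary word $w_N=\prod_{i=1}^g[\gamma_{2i-1}^{(N)},\gamma_{2i}^{(N)}]$, while inside $\Gamma_r^{N-1}$ it is the free generator of $G_{N-1}$ carried by the non-separating geodesic $\gamma\subset S_r^{N-1}$. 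An easy induction then shows $G_{N-1}$, and hence $C_{N-1}$, is a free factor of $\Gamma_r^{N-1}$; writing $\Gamma_r^{N-1}=C_{N-1}*L_{N-1}$ and collapsing the amalgamation over the free factor $C_{N-1}$ yields $\Gamma_r^N=L_{N-1}*G_N$. In particular $\Gamma_r^0\subsetneq\Gamma_r^1\subsetneq\cdots$ is a strictly increasing chain of finitely generated free groups with union $\Gamma_r$, so $\Gamma_r$ is \emph{not} finitely generated. It remains to prove that $G_0$ fills $\Gamma_r^N$ for every $N$.

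I would prove this by induction on $N$, the step relying on two facts. First, \emph{the boundary fills the surface}: $C_{N-1}=\langle w_N\rangle$ fills $G_N=\pi_1(S_r^N)$. Indeed, capping $\D S_r^N$ by a disc turns any nontrivial free splitting of $\pi_1(S_r^N)$ in which $w_N$ is elliptic into a nontrivial free splitting of the closed genus-$g$ surface group $\pi_1(S_r^N)/\langle\langle w_N\rangle\rangle$, unless the vertex group containing $w_N$ is infinite cyclic and generated by $w_N$; but closed surface groups are freely indecomposable and $w_N$ is not primitive (it dies in $H_1$), so neither alternative occurs. Second, a \emph{gluing lemma}: if $A\le P*C$ fills $P*C$ and $C\neq 1$ fills $Q$ (with $P*C\hookrightarrow P*Q$ equal to $\Id_P$ on $P$ and the given inclusion on $C$), then $A$ fills $P*Q$. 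This is a Bass--Serre argument: in a minimal action of $P*Q$ on a tree with trivial edge stabilizers and no valence-one vertices in which $A$ is elliptic, restrict to $P*C$; using that $A$ fills $P*C$ (pass to the minimal $(P*C)$-invariant subtree and project a fixed point of $A$ onto it) forces $P*C$, hence $C$, to fix a vertex $y$; then $C\le Q$ is elliptic and $C$ fills $Q$, so $Q$ fixes a vertex, which must be $y$ since $C\neq 1$ and edge stabilizers are trivial; hence $P*Q$ fixes $y$, so by minimality the tree is a point. Applying the gluing lemma with $A=G_0$, $P=L_{N-1}$, $C=C_{N-1}$, $Q=G_N$ and the factorization $\Gamma_r^N=L_{N-1}*G_N$ completes the induction.

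Finally, suppose $\Gamma_r$ were free. As $G_0$ is finitely generated and nontrivial it lies in a finitely generated free factor $F$ of $\Gamma_r$, and $\bar G:=\core_{G_0}(F)$ is finitely generated (Lemma \ref{core-exists}); since a free factor of a free factor is a free factor, the uniqueness in Lemma \ref{core-exists} shows $\core_{G_0}(F')=\bar G$ for \emph{every} finitely generated free factor $F'\supseteq G_0$ of $\Gamma_r$. Given $N$, choose such an $F'$ containing both $\Gamma_r^N$ and $F$; since $G_0$ fills $\Gamma_r^N$, equations \eqref{core-fix1} and \eqref{core-order} give $\Gamma_r^N=\core_{G_0}(\Gamma_r^N)\subseteq\core_{G_0}(F')=\bar G$. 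Hence $\Gamma_r=\bigcup_N\Gamma_r^N\subseteq\bar G\subseteq\Gamma_r$, so $\Gamma_r=\bar G$ would be finitely generated, contradicting the first paragraph. Therefore $\Gamma_r$ is not free. The main obstacle is the gluing lemma together with the inductive verification that $G_0$ fills every $\Gamma_r^N$; the boundary-fills-surface fact and the concluding core argument are comparatively routine.
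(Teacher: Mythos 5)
Your proposal is correct and, at heart, runs on the same engine as the paper's proof: the tower structure of $X_r=\cup_{k\ge0}S_r^k$ together with the fact that a surface group does not split freely relative to its boundary curve. The paper's version is considerably terser: it shows directly that any free splitting $\Gamma_r=A*A'$ with $\D S_r^0\in A$ forces $\pi_1(\cup_{k=0}^m S_r^k)\subset A$ for all $m$, hence $A'=1$, and then leaves implicit the last step (if $\Gamma_r$ were free, then $\D S_r^0$ would lie in a finitely generated free factor $A$, which would force $\Gamma_r=A$ to be finitely generated, contradicting the strictly increasing filtration). You instead show $G_0=\pi_1(S_r^0)$ fills each finite stage $\Gamma_r^N=L_{N-1}*G_N$ via a Bass--Serre ``gluing lemma'', and then deduce the contradiction from Lemma~\ref{core-exists} and the $\core_{G_0}$ map of Section~\ref{sec:AC}. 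The gluing lemma is a nice reusable abstraction of what the paper does inline, and you also spell out the closing contradiction the paper omits, so your write-up is both more modular and more complete. Two small remarks. First, in the gluing lemma you should normalize to the case where \emph{all} edge stabilizers of the $(P*Q)$-tree are trivial (collapse any edge with nontrivial stabilizer, keeping the distinguished trivial-stabilizer edge); the relative free-splitting characterization in Section~\ref{sec:AC} only requires some trivial edge stabilizer, but your minimal-subtree argument needs the all-trivial form to conclude that $C$ fixing two vertices $v\neq v'$ forces an edge with nontrivial stabilizer. Second, the identification $\Gamma_r^N=L_{N-1}*G_N$ obtained by collapsing the amalgam over a free-factor edge group is correct, but is exactly the point at which one must track that the non-separating curve $\gamma\subset S_r^{N-1}$ (the image of $\D S_r^N$) is a basis element of $G_{N-1}$; this is the hypothesis (b) on the chosen free basis of $\pi_1(S_r)$, so it is worth flagging explicitly.
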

\begin{proof}
We claim that, if $\Gamma_r=A*A'$ is a free splitting with $\D S_r^0\in A$, then $\Gamma_r=A$. Note first that, since $\partial S_r^0$ is connected, there is no free splitting of $\pi_1(S_r^0)$ relative to the boundary. Thus $\pi_1(S_r^0)\subset A$. Furthermore, since $\D S_r^1\in\pi_1(S_r^0)\subset A$, the same argument shows also that $\pi_1(S_r^1)\subset A$, that is, 
\[
\pi_1(S_r^0\cup S_r^1)=\pi_1(S_r^0)*_{\pi_1(\D S_r^1)}\pi_1(S_r^1)\subset A.
\]
Now, iterating the same argument, we obtain 
\[
\pi_1(\cup_{k=0}^mS_r^k)\subset A
\]
for all $m\ge 1$. Thus $\Gamma_r\subset A$, as we needed to prove.
\end{proof}

At this point we have all ingredients needed to finalize the proof of Theorem \ref{grope}.

\begin{proof}[Proof of Theorem \ref{grope}]
For given $\epsilon>0$, let $r_0>0$ be as in Lemma \ref{lem-dim-bound} and fix $r\ge r_0$. Then, by the said lemma, the limit set of the convex cocompact group $\rho_r(\pi_1(Y_r))$ has Hausdorff dimension less than $1+\epsilon$. In particular, the subgroup $\Gamma_r\subset\rho(\pi_1(Y_r))$ is discrete, purely hyperbolic and satisfies
\[
\dim_\CH(\Lambda_{\Gamma_r})\le 1+\epsilon.
\]
On the other hand, $\Lambda_{\Gamma_r}$ is a Cantor set by Lemma \ref{lem-cantor} and $\Gamma_r$ is not free by Lemma \ref{lem-not-free}. 
\end{proof}

The construction we just gave is very flexible. For example, the surfaces $S_r$ can be chosen so that the injectivity radius is bounded from below by $1$, which implies that the injectivity radius of $\BH^3/\Gamma_r$ is also uniformly bounded from below by $1$ for large $r$. We could have also chosen to the fix the topological type of the surfaces $S_r$ by considering hyperbolic metrics on them for which the boundary $\partial S_r$ and curve $\gamma$ become short when $r$ is large. Note that in this case the groups $\Gamma_r$ would have been isomorphic to each other. With small modifications one could also obtain discrete torsion free perfect subgroups $\Gamma$ of $\Isom_+(\BH^n)$ whose limit sets are Cantor sets of Hausdorff dimension less than $2$; recall that a group is \emph{perfect} if its abelianization is trivial. However, we do not know how to construct such a group $\Gamma$ with $\Lambda_\Gamma$ of Hausdorff dimension close to $1$.

\begin{quest*}
Is there $\epsilon>0$ such that every perfect torsion free discrete subgroup of $\Isom_+(\BH^n)$ satisfies $\dim_\CH(\Lambda_\Gamma)\ge 1+\epsilon$?
\end{quest*}

The answer is positive for $\Gamma\subset\Isom_+(\BH^3)$ finitely generated, since any such group is either cocompact or has positive first Betti number, and in the former case one has that its limit set $\Lambda_\Gamma$ is the whole boundary at infinity $\D_\infty\BH^3=\BS^2$, meaning that it has Hausdorff dimension $2$.
\medskip

\bibliographystyle{abbrv}
\bibliography{locallyfree}

\end{document}